\titleformat{\section}{\normalfont\bfseries\Large}{Section~\thesection}{1em}{}
\renewcommand\appendix{%
\par
\setcounter{section}{0}
\setcounter{subsection}{0}
\gdef\thesection{\Alph{section}}
\titleformat{\section}{\normalfont\bfseries\Large}{Appendix~\thesection}{1em}{}}
\newcommand\secref[1]{Section~\ref{#1}}
\newcommand\appref[1]{Appendix~\ref{#1}}
\newcommand\thmref[1]{Theorem~\ref{#1}}
\newcommand\lemref[1]{Lemma~\ref{#1}}
\newcommand\propref[1]{Proposition~\ref{#1}}
\newcommand\corref[1]{Corollary~\ref{#1}}
\newcommand\definitionref[1]{Definition~\ref{#1}}
\newcommand\exref[1]{Example~\ref{#1}}
\providecommand\bibcommenthead{}
\newcommand\comma{\mathpunct{\text{, }}}
\numberwithin{equation}{section}
\newlength\mylen
\newtheoremstyle{indent}%
  {\item[\hskip\mylen \theorem@headerfont ##1 ##2\theorem@separator]}%
  {\item[\hskip\mylen \theorem@headerfont ##1 ##2 (##3)\theorem@separator]} %
\newtheoremstyle{nonumberindent}%
  {\item[\hskip\mylen \theorem@headerfont ##1\theorem@separator]}%
  {\item[\hskip\mylen \theorem@headerfont ##1（##3）\theorem@separator]} %
\theoremstyle{indent}
\newtheorem{thm}{Theorem}[section]
\newtheorem{lem}[thm]{Lemma}
\newtheorem{prop}[thm]{Proposition}
\newtheorem{cor}[thm]{Corollary}
\newtheorem{definition}[thm]{Definition}
\newtheorem{rmk}[thm]{Remark}
{
  \theorembodyfont{\normalfont}
  \theoremsymbol{\mbox{$\blacksquare$}}
  \newtheorem*{proof}{Proof}
  \newtheorem{ex}[thm]{Example}
}
\DeclareMathOperator\dif{d\!}
\DeclareMathOperator*\osc{osc}
\DeclareMathOperator\Ent{Ent}
\DeclareMathOperator*\esssup{ess\,sup}
\DeclareMathOperator\supp{supp}
\begin{document}

\allowdisplaybreaks
\setlength{\baselineskip}{16pt}
\setlength{\lineskiplimit}{3bp}
\setlength{\lineskip}{3bp}
\setlength{\emergencystretch}{3em}
\setlength{\parskip}{0pt}
\setlength{\abovedisplayskip}{5pt}
\setlength{\belowdisplayskip}{5pt}
\setlength{\abovedisplayshortskip}{3pt}
\setlength{\belowdisplayshortskip}{3pt}

\thispagestyle{plain}
{
  \centering
  \setstretch{1}
  {\bfseries\Large Sharp $\mathrm{L}^\infty$ estimates for fully non-linear elliptic equations on compact complex manifolds}

  \vspace{12pt}
  {
  \large Yuxiang Qiao\footnote{School of Mathematical Sciences, Peking University, Beijing, China \\
  Corresponding author: Yuxiang Qiao, E-mail: qiaoyx@stu.pku.edu.cn}
  }
  
}

\vspace{12pt}
\begin{abstract}
\small
\setlength{\baselineskip}{13pt}
We study the sharp $\mathrm{L}^\infty$ estimates for fully non-linear elliptic equations on compact complex manifolds. For the case of K\"ahler manifolds, we prove that the oscillation of any admissible solution to a degenerate fully non-linear elliptic equation satisfying several structural conditions can be controlled by the $\mathrm{L}^1(\log\mathrm{L})^n(\log\log\mathrm{L})^r(r>n)$ norm of the right-hand function (in a regularized form). This result improves that of Guo-Phong-Tong. In addition to their method of comparison with auxiliary complex Monge-Amp\`ere equations, our proof relies on an inequality of H\"older-Young type and an iteration lemma of De Giorgi type. For the case of Hermitian manifolds with non-degenerate background metrics, we prove a similar $\mathrm{L}^\infty$ estimate which improves that of Guo-Phong. An explicit example is constucted to show that the $\mathrm{L}^\infty$ estimates given here may fail when $r\leqslant n-1$. The construction relies on a gluing lemma of smooth, radial, strictly plurisubharmonic functions.
\end{abstract}

\noindent{\bfseries Mathematics Subject Classification} Primary 32Q99 · 35J60; Secondary 32Q15 · 35J70

{}


\section{Introduction} \label{sec: Introduction}

The $\mathrm{L}^\infty$ estimate plays an important role in the theory of fully non-linear elliptic equations on compact complex manifolds. \citeauthor{Yau1978} \cite{Yau1978} applied Moser iteration to get the $\mathrm{L}^\infty$ estimate of Monge-Amp\`ere equations (see \secref{sec: Preliminaries to L^infty estimates of fully non-linear elliptic equations on complex manifolds} for notations)
\begin{eq} \label{eq: Monge-Ampere}
\det\bigl(\bm{\omega}_{\bm g}^{-1}(\bm{\omega}_{\bm g}+\mathrm{i}\partial\overline\partial u)\bigr)=\varphi
\end{eq}
on compact K\"ahler manifolds $(\mathcal{M}\comma\bm{\omega}_{\bm g})$, which was essential to his proof of Calabi conjecture. The method of Moser iteration used by \citeauthor{Yau1978} is effective when the right-hand function $\varphi$ of \eqref{eq: Monge-Ampere} belongs to $\mathrm{L}^p$ for $p>n$, where $n$ is the complex dimension of the K\"ahler manifolds. \citeauthor{Kolodziej1998} \cite{Kolodziej1998} applied pluripotential theory to prove the $\mathrm{L}^\infty$ estimates for Monge-Amp\`ere equations \eqref{eq: Monge-Ampere} when $\varphi$ belongs to $\mathrm{L}^1(\log\mathrm{L})^n(\log\log\mathrm{L})^r$ for $r>n$. As was noted by \citeauthor{Kolodziej1998} \cite{Kolodziej1998}, this hypothesis of $\varphi$ is almost sharp since the Monge-Amp\`ere equation admits unbounded  $\mathrm{L}^1(\log\mathrm{L})^q(q<n)$ solutions with pointwise singularities by a result of L. Persson. The pluripotential theory used by \citeauthor{Kolodziej1998} is powerful for $\mathrm{L}^\infty$ estimates of Monge-Amp\`ere equations, and it has been extended to handle the case of degenerate background metrics \cite{Demailly2010}, the case of Hermitian manifolds \cite{Dinew2009} and the case of Hessian equations \cite{Dinew2014}. 

However, the pluripotential theory can hardly be applied to handle more general fully non-linear equations since it depends on the specific structure of Monge-Amp\`ere equations and Hessian equations. It was a long-standing question whether the above sharp $\mathrm{L}^\infty$ estimates can be proved by PDE methods instead of the pluripotential theory, until \citeauthor{Guo2023} \cite{Guo2023} gave an affirmative answer. They applied the methods of comparison with auxiliary complex Monge-Amp\`ere equations, De Giorgi iteration and Alexandrov maximum principle to prove that the oscillation of any admissible solution $u$ to the fully non-linear elliptic equation
\begin{eq} \label{eq: fully non-linear elliptic, non-degenerate}
f\Bigl(\bm\lambda\bigl(\bm{\omega}_{\bm g}^{-1}(\bm{\omega}_{\bm g}+\mathrm{i}\partial\overline\partial u)\bigr)\Bigr)=\varphi
\end{eq}
satisfying several structural conditions can be controlled by the $\mathrm{L}^1(\log\mathrm{L})^q(q>n)$ norm of $\varphi^n$ (note that $f$ was required to be homogeneous of degree 1). \citeauthor{Guo2022} \cite{Guo2022} improved their proof, namely bypassed the use of Alexandrov maximum principle, and extended their result to the case of degenerate background metrics. Specifically, \citeauthor{Guo2022} \cite{Guo2022} studied the degenerate fully non-linear elliptic equation
\begin{eq} \label{eq: fully non-linear elliptic, degenerate}
\left\{ \begin{gathered}
f\Bigl(\bm\lambda\bigl(\bm{\omega}_{\bm g}^{-1}(\bm{\chi}+\mathrm{i}\partial\overline\partial u)\bigr)\Bigr)=c_{\bm\chi}\mathrm{e}^{\psi}\comma \\
\bm\lambda\bigl(\bm{\omega}_{\bm g}^{-1}(\bm{\chi}+\mathrm{i}\partial\overline\partial u)\bigr)\in\Gamma\comma \int_{\mathcal M} \mathrm{e}^{n\psi}\bm{\omega}_{\bm g}^n=V_{\bm g}\triangleq\int_{\mathcal M} \bm{\omega}_{\bm g}^n
\end{gathered} \right.
\end{eq}
on a compact K\"ahler manifold $(\mathcal{M}\comma\bm{\omega}_{\bm g})$ of dimension $n$. Here $\bm\chi$ is a K\"ahler form on $\mathcal M$, $c_{\bm\chi}$ is a positive normalized constant depending on $\bm\chi$, $\bm\lambda\bigl(\bm{\omega}_{\bm g}^{-1}(\bm{\chi}+\mathrm{i}\partial\overline\partial u)\bigr)$ represents the vector composed of the $n$ real eigenvalues of $\bm{\omega}_{\bm g}^{-1}(\bm{\chi}+\mathrm{i}\partial\overline\partial u)$, $\Gamma$ is a symmetric open convex cone in $\mathbb{R}^n$ satisfying $\Gamma_n\subset\Gamma\subset\Gamma_1$ (see \secref{sec: Preliminaries to L^infty estimates of fully non-linear elliptic equations on complex manifolds}), and $f$ is a positive $\mathrm{C}^1$ symmetric function defined in $\Gamma$ satisfying the following three conditions:
\begin{enumerate}[(A)]
\item \label{item: homogeneous} $f$ is homogeneous of degree 1, i.e. $f(t\bm\lambda)=tf(\bm\lambda)$ for any $t>0$ and $\bm\lambda\in\Gamma$;
\item \label{item: elliptic} $f$ is elliptic, i.e. $\frac{\partial f}{\partial \lambda_j}(\bm\lambda)>0$ for any $j\in\{1\comma2\comma\cdots\comma n\}$ and $\bm\lambda\in\Gamma$;
\item \label{item: structural} There exists a positive constant $\delta$ such that $\prod\limits_{j=1}^n \frac{\partial f}{\partial\lambda_j}(\bm\lambda)\geqslant\delta$ for any $\bm\lambda\in\Gamma$.
\end{enumerate}
Let $V_{\bm\chi}$ denote $\int_{\mathcal M} \bm{\chi}^n$. They proved the following theorem.

\begin{thm}[Guo-Phong-Tong] \label{thm: Guo-Phong-Tong}
Assume $\bm\chi\leqslant M\bm{\omega}_{\bm g}$ for some positive constant $M$. Fix $q>n$. Then any solution $u\in\mathrm{C}^2(\mathcal{M})$ to the equation \eqref{eq: fully non-linear elliptic, degenerate} satisfies $\osc\limits_{\mathcal M} u\leqslant C$, where $C$ is a positive constant depending only on $\mathcal{M}$, $\bm{\omega}_{\bm g}$, $n$, $q$, $\frac{1}{\delta}$, $M$ and upper bounds of the following two quantities:
\begin{eq} \label{eq: two quantities}
\frac{c_{\bm\chi}^n}{V_{\bm\chi}}\comma\quad \int_{\mathcal M} \mathrm{e}^{n\psi}|\psi|^q \bm{\omega}_{\bm g}^n.
\end{eq}
\end{thm}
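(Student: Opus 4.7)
The plan is to follow Guo--Phong--Tong's strategy: compare the fully non-linear equation with a complex Monge--Amp\`ere equation via a maximum principle, then run a De Giorgi iteration on sublevel sets, using only conditions (A)--(C). First I would normalize $u$ so that $\sup_\mathcal M u=0$, whence $\osc_\mathcal M u=-\inf_\mathcal M u$. \textbf{Step 1 (auxiliary MA equation).} For each $s>0$ with $\Omega_s:=\{u<-s\}$ of positive measure, I would introduce a smooth non-decreasing cutoff $\tau_k:\mathbb R\to[0,1]$ with $\tau_k=0$ on $(-\infty,0]$ and $\tau_k=1$ on $[k^{-1},\infty)$, set $A_{s,k}:=V_{\bm\chi}^{-1}\int_\mathcal M \tau_k(-u-s)c_{\bm\chi}^n\mathrm e^{n\psi}\bm\omega_{\bm g}^n$, and invoke Yau's theorem on $(\mathcal M,\bm\omega_{\bm g})$ to solve
\begin{equation*}
(\bm\chi+\mathrm i\partial\overline\partial v)^n=\frac{\tau_k(-u-s)c_{\bm\chi}^n\mathrm e^{n\psi}}{A_{s,k}}\bm\omega_{\bm g}^n,\quad \sup_\mathcal M v=0,
\end{equation*}
whose solvability requirement $\int(\bm\chi+\mathrm i\partial\overline\partial v)^n=V_{\bm\chi}$ is ensured by the choice of $A_{s,k}$.

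\textbf{Step 2 (maximum principle comparison).} For parameters $\Lambda,\epsilon>0$ to be chosen, I would examine $\Phi:=-(u+s)-\epsilon[(-v+\Lambda)^{n/(n+1)}-\Lambda^{n/(n+1)}]$. At an interior maximum $x_0$ with $\Phi(x_0)>0$ (which forces $x_0\in\Omega_s\cap\{-u-s\geqslant k^{-1}\}$ once $k$ is large), $\mathrm i\partial\overline\partial\Phi\leqslant 0$ at $x_0$ and discarding the non-positive $\mathrm i\partial v\wedge\overline\partial v$ term yields
\begin{equation*}
\bm\chi+\mathrm i\partial\overline\partial u\geqslant(1-a)\bm\chi+a(\bm\chi+\mathrm i\partial\overline\partial v),\quad a:=\tfrac{n\epsilon}{n+1}(-v(x_0)+\Lambda)^{-1/(n+1)}.
\end{equation*}
Homogeneity (A) via Euler's identity $f(\bm\lambda)=\sum_j\lambda_j\partial_j f$ combined with (C) gives, by AM--GM, $f(\bm\lambda)\geqslant n\delta^{1/n}(\lambda_1\cdots\lambda_n)^{1/n}$ on $\Gamma$. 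Applying ellipticity (B), the concavity of $\det^{1/n}$ on the Hermitian positive cone, and the MA equation for $v$, one deduces at $x_0$
\begin{equation*}
c_{\bm\chi}\mathrm e^{\psi(x_0)}=f(\bm\lambda(x_0))\geqslant n\delta^{1/n}\cdot a\cdot c_{\bm\chi}\mathrm e^{\psi(x_0)}A_{s,k}^{-1/n},
\end{equation*}
equivalently $(-v(x_0)+\Lambda)^{1/(n+1)}\geqslant\frac{n^2\delta^{1/n}\epsilon}{(n+1)A_{s,k}^{1/n}}$. A suitable choice of $\epsilon$ in terms of $\Lambda$, $A_{s,k}$, $\|v\|_{L^\infty}$ forces this to contradict $-v(x_0)\leqslant\|v\|_{L^\infty}$, leaving the pointwise comparison
\begin{equation*}
(-u-s)^+\leqslant\epsilon\bigl[(-v+\Lambda)^{n/(n+1)}-\Lambda^{n/(n+1)}\bigr]\quad\text{on }\mathcal M.
\end{equation*}

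\textbf{Step 3 (De Giorgi iteration).} I would integrate the pointwise bound over $\Omega_t$ for $t>s$, using $(-u-s)\geqslant t-s$ on $\Omega_t$ and $(-v+\Lambda)^{n/(n+1)}-\Lambda^{n/(n+1)}\leqslant\frac{n}{n+1}\Lambda^{-1/(n+1)}(-v)$, and bound $\int_{\Omega_t}(-v)\bm\omega_{\bm g}^n$ either by Alexandrov's maximum principle (as in the original Guo--Phong--Tong argument) or by a direct Moser-type iteration (as in Guo--Phong) applied to the auxiliary MA equation. The gain on $A_{s,k}$ coming from $\tau_k\leqslant\mathbf 1_{\Omega_s}$ and the Orlicz hypothesis $\int\mathrm e^{n\psi}|\psi|^q\bm\omega_{\bm g}^n\leqslant K$ for $q>n$, after optimizing in $\epsilon,\Lambda$ and letting $k\to\infty$, then yields an inequality $(t-s)|\Omega_t|\leqslant C|\Omega_s|^{1+\gamma}$ with $\gamma=\gamma(n,q)>0$; the standard De Giorgi iteration lemma forces $|\Omega_{C_0}|=0$ and hence $-\inf_\mathcal M u\leqslant C_0$ with $C_0$ depending only on the data listed in the theorem. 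The hardest part will be propagating the $A_{s,k}$-dependence through every estimate so that the condition $q>n$ produces an exponent $1+\gamma$ strictly greater than $1$; the full non-linearity is absorbed essentially through the single AM--GM reduction from $f$ to $\det^{1/n}$ at Step 2, which is why conditions (A)--(C) are precisely the minimum required.
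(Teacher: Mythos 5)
Your overall skeleton (auxiliary complex Monge--Amp\`ere equation solved by Yau's theorem, a maximum-principle comparison with a function of the form $-\epsilon(-v+\Lambda)^{n/(n+1)}+(-u-s)$, then a De Giorgi iteration driven by $q>n$) is exactly the Guo--Phong--Tong strategy that the paper cites for this theorem and reuses in \secref{sec: L^infty estimates on Kahler manifolds}. But there is a genuine gap in Step 2, and it traces back to your choice of cutoff in Step 1. The cutoff must approximate $t\mapsto (t)_+$, not the indicator $\mathbf 1_{\{t>0\}}$ (compare \eqref{eq: tau_l}): the right-hand side of the auxiliary equation has to carry the factor $\tau_k(-u-s)\approx(-u-s)_+$ itself. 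It is precisely this factor that, at the maximum point $x_0$ of the comparison function, turns the lower bound from \propref{prop: properties of F^jk}(4) into $\nu b(-v+B)^{b-1}\,n\delta^{1/n}A_{s,k}^{-1/n}\,\tilde u_s(x_0)^{1/n}\leqslant\Lambda$, i.e.\ a bound on $\tilde u_s(x_0)$ in terms of $(-v(x_0)+B)^{n(1-b)}$; with $b=n/(n+1)$ the exponents match and one concludes $w(x_0)\leqslant 0$ with constants depending only on $A_{s,k}$, $n$, $\delta$, $\Lambda$. With your indicator cutoff the factor $\tilde u_s(x_0)^{1/n}$ is lost, the maximum-principle computation only bounds $-v(x_0)$ from below, and your proposed escape --- choosing $\epsilon$ large enough to contradict $-v(x_0)\leqslant\|v\|_{\mathrm L^\infty}$ --- injects $\|v_{s,k}\|_{\mathrm L^\infty}$ into the comparison inequality. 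That quantity is not controlled uniformly in $s$: as $A_{s,k}\to 0$ the normalized datum $\mathbf 1_{\Omega_s}c_{\bm\chi}^n\mathrm e^{n\psi}/A_{s,k}$ blows up in every Orlicz norm, so an a priori bound on $\|v_{s,k}\|_{\mathrm L^\infty}$ is exactly the kind of estimate one is trying to prove. As written, Step 2 does not close.

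Two smaller points. First, your claimed inequality $f(\bm\lambda)\geqslant n\delta^{1/n}(\lambda_1\cdots\lambda_n)^{1/n}$ on all of $\Gamma$ via Euler plus AM--GM is false in general: $\Gamma\subset\Gamma_1$ only forces $\sum_j\lambda_j>0$, so some $\lambda_j$ may be negative and the AM--GM step is invalid. The correct tool is the linearized statement for \emph{positive} $(1,1)$-forms, $F_{\bm\chi+\mathrm i\partial\overline\partial u}^{j\overline k}\omega_{j\overline k}\geqslant n\bigl(\delta\det(\bm\omega_{\bm g}^{-1}\bm\omega)/\varphi^n\bigr)^{1/n}$ (\propref{prop: properties of F^jk}(4)), applied to $\bm\omega=\bm\chi+\mathrm i\partial\overline\partial v$. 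Second, in Step 3 the route via the Alexandrov maximum principle is legitimate (it is the original GPT argument), but note that the cleaner substitute used in this paper is Tian's $\alpha$-invariant bound \eqref{eq: alpha-invariant}, which gives $\int_{\mathcal M}\mathrm e^{-\alpha v}\bm\omega_{\bm g}^n\leqslant C$ directly and feeds into the Young-inequality step that converts the exponential integrability of $-v$ into the weighted integral bound needed for the iteration.
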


\begin{rmk}
As was noted by \citeauthor{Guo2023} \cite{Guo2023}, \thmref{thm: Guo-Phong-Tong} remains true when the condition \eqref{item: homogeneous} satisfied by $f$ is replaced by the weaker Euler inequality
\begin{eq} \label{eq: Euler inequality}
\sum_{j=1}^n \lambda_j\frac{\partial f}{\partial \lambda_j}(\bm\lambda)\leqslant\Lambda f(\bm\lambda)\comma \forall\bm\lambda\in\Gamma\comma
\end{eq}
for some fixed positive constant $\Lambda$. Moreover, the condition \eqref{item: structural} is satisfied by many classes of non-linear equations including Monge-Amp\`ere equations with $f(\bm\lambda)=\bigl(\sigma_n(\bm\lambda)\bigr)^{\frac 1n}(\bm\lambda\in\Gamma_n)$, $k$-Hessian equations with $f(\bm\lambda)=\bigl(\sigma_k(\bm\lambda)\bigr)^{\frac 1k}(\bm\lambda\in\Gamma_k)$ and Hessian quotient equations with
\begin{eq} \label{eq: Hessian quotient}
f(\bm\lambda)=\left(\frac{\sigma_k(\bm\lambda)}{\sigma_l(\bm\lambda)}\right)^{\frac{1}{k-l}}+c\bigl(\sigma_m(\bm\lambda)\bigr)^{\frac 1m} (\bm\lambda\in\Gamma_{\max\{k\comma m\}})
\end{eq}
for some $c>0$. \citeauthor{Harvey2023} \cite{Harvey2023} proved that the condition \eqref{item: structural} holds for all invariant G\r{a}rding-Dirichlet operators, including $p$-Monge-Amp\`ere equations with
\begin{eq} \label{eq: p-Monge-Ampere}
\begin{gathered}
f(\bm\lambda)=\left(\prod_{1\leqslant j_1\leqslant j_2\leqslant\cdots\leqslant j_p\leqslant n} (\lambda_{j_1}+\lambda_{j_2}+\cdots+\lambda_{j_p})\right)^{\frac{(n-p)!\,p!}{n!}}\comma \\
\bm\lambda\in\bigl\{\bm\mu\in\mathbb{R}^n\bigm|\mu_{j_1}+\mu_{j_2}+\cdots+\mu_{j_p}>0\comma\forall 1\leqslant j_1\leqslant j_2\leqslant\cdots\leqslant j_p\leqslant n\bigr\}.
\end{gathered}
\end{eq}
\end{rmk}

The first quantity $\frac{c_{\bm\chi}^n}{V_{\bm\chi}}$ in \eqref{eq: two quantities} can be viewed as a substitute for a cohomological constraint when dealing with general equations, while the second quantity in \eqref{eq: two quantities} is similar to (though not the same as) the $\mathrm{L}^1(\log\mathrm{L})^q$ norm of $\mathrm{e}^{n\psi}$. An immediate question is whether the second quantity in \eqref{eq: two quantities} can be replaced by the weaker $\mathrm{L}^1(\log\mathrm{L})^n(\log\log\mathrm{L})^r$ norm of $\mathrm{e}^{n\psi}$, which is true in the case of non-degenerate Monge-Amp\`ere equations according to \cite{Kolodziej1998}. In this paper, we prove the following theorem and give an affirmative answer to this question.

\begin{thm} \label{thm: L^infty estimates on Kahler manifolds}
Let $(\mathcal{M}\comma\bm{\omega}_{\bm g})$ be a compact K\"ahler manifold of dimension $n$, $\bm\chi$ be a K\"ahler form on $\mathcal M$ satisfying $\bm\chi\leqslant M\bm{\omega}_{\bm g}$ for some positive constant $M$, and $\varphi$ be a positive smooth function on $\mathcal M$. Assume that $u\in\mathrm{C}^2(\mathcal M)$ satisfies
\begin{eq} \label{eq: fully non-linear elliptic, degenerate, non-normalized}
\left\{ \begin{gathered}
f\Bigl(\bm\lambda\bigl(\bm{\omega}_{\bm g}^{-1}(\bm{\chi}+\mathrm{i}\partial\overline\partial u)\bigr)\Bigr)\leqslant\varphi\comma \\
\bm\lambda\bigl(\bm{\omega}_{\bm g}^{-1}(\bm{\chi}+\mathrm{i}\partial\overline\partial u)\bigr)\in\Gamma\comma
\end{gathered} \right.
\end{eq}
where $\Gamma$ is a symmetric open convex cone in $\mathbb{R}^n$ satisfying $\Gamma_n\subset\Gamma\subset\Gamma_1$ and $f$ is a positive $\mathrm{C}^1$ symmetric function defined in $\Gamma$ satisfying the conditions \eqref{item: elliptic}, \eqref{item: structural} and \eqref{eq: Euler inequality}. Fix $r>n$. Then we have $\osc\limits_{\mathcal M} u\leqslant C$, where $C$ is a positive constant depending only on $\mathcal{M}$, $\bm{\omega}_{\bm g}$, $n$, $r$, $\frac{1}{\delta}$, $\Lambda$, $M$ and
\begin{eq} \label{eq: entropy}
\Ent_{n\comma r}\left(\frac{\varphi^n}{V_{\bm\chi}}\right)\triangleq\left\|\frac{\varphi^n}{V_{\bm\chi}}\right\|_{\mathrm{L}^1(\log\mathrm{L})^n(\log\log\mathrm{L})^r(\mathcal M\comma\bm{\omega}_{\bm g}^n)}.
\end{eq}
\end{thm}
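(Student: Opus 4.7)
The plan is to follow the auxiliary Monge--Amp\`ere comparison strategy of Guo--Phong--Tong that yields \thmref{thm: Guo-Phong-Tong}, refining its final quantitative step by coupling a Hölder--Young inequality for the Orlicz class $\mathrm{L}^1(\log\mathrm{L})^n(\log\log\mathrm{L})^r$ with a De Giorgi-type iteration lemma that closes exactly at the borderline $r>n$. Normalize so that $\sup_{\mathcal M} u=0$ and write $L=\osc_{\mathcal M}u$. For each $s\in(0,L)$, set $\Omega_s=\{u+s<0\}$ and solve an auxiliary complex Monge--Amp\`ere equation
\begin{equation*}
\det\bigl(\bm{\omega}_{\bm g}^{-1}(\bm{\omega}_{\bm g}+\mathrm{i}\partial\overline\partial v_s)\bigr) \;=\; c_s\,\mathbf{1}_{\Omega_s}\,\varphi^n \;+\;(\text{smooth regularization}),
\end{equation*}
normalized so that $\sup v_s=0$ and the right-hand side integrates to $V_{\bm g}$. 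Applying the maximum principle to the comparison function $\Psi_s=-\varepsilon(-v_s+C_0)^{1+1/n}+(u+s)$, and using conditions \eqref{item: elliptic}, \eqref{item: structural} and the Euler inequality \eqref{eq: Euler inequality} together with $f\leqslant\varphi$ on $\Omega_s$, an arithmetic--geometric mean estimate should produce a pointwise bound of the form $s\leqslant C\bigl(\sup_{\mathcal M}(-v_s)\bigr)^{n/(n+1)}$, reducing the control of $L$ to a uniform $\mathrm{L}^\infty$ bound on $v_s$.

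Next, I would estimate $\Vert v_s\Vert_{\mathrm L^\infty}$ by a sharp Kołodziej-type estimate in which the right-hand side of the auxiliary Monge--Amp\`ere equation is measured in the Orlicz class built from the Young function $t(\log t)^n(\log\log t)^r$. The new ingredient is a Hölder--Young inequality
\begin{equation*}
\int_{\mathcal M} F\, G\,\bm{\omega}_{\bm g}^n \;\leqslant\; 2\,\Vert F\Vert_{\mathrm L^1(\log\mathrm L)^n(\log\log\mathrm L)^r}\,\Vert G\Vert_{*},
\end{equation*}
where $\Vert\cdot\Vert_{*}$ is the norm in the complementary Orlicz space. Taking $F=\varphi^n/V_{\bm\chi}$ and $G$ an appropriate probe of $\Omega_s$ translates the entropy bound $\Ent_{n,r}(\varphi^n/V_{\bm\chi})$ into an estimate $\Vert v_s\Vert_{\mathrm L^\infty}\leqslant C\,\eta\bigl(\mu(\Omega_s)\bigr)$, with $\eta(t)\to 0$ as $t\to 0^+$ at a double-logarithmic rate determined by $r$. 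Combining with the first step yields a scalar inequality $s\leqslant\Phi(\mu(\Omega_s))$ with $\Phi(0^+)=0$.

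I would then run a De Giorgi iteration along levels $s_k$, decrementing $\mu(\Omega_{s_k})$ by a fixed geometric factor at each step. The hypothesis $r>n$ makes $\sum_k \Phi(\mu(\Omega_{s_k}))$ summable with room to spare, so $s_k\nearrow s_\infty<\infty$ with $\mu(\Omega_{s_\infty})=0$, whence $L\leqslant s_\infty\leqslant C$, as required.

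The main obstacle is the fine calibration of the Hölder--Young inequality against the iteration at the endpoint: the decay $\eta(t)\sim(\log\log t^{-1})^{-r/n}$ is only summable over geometric decrements in $t$ precisely when $r>n$. Any weaker quantitative version would either force back the stronger $\mathrm L^1(\log\mathrm L)^q$ hypothesis ($q>n$) of Guo--Phong--Tong or fail to close the iteration, while a stronger version would contradict the sharpness counterexample for $r\leqslant n-1$ promised in the abstract. Making the complementary Young function explicit, tracking the dependence of constants through the auxiliary Monge--Amp\`ere step, and verifying the iteration at this borderline with sharp numerical constants is the technical heart of the argument.
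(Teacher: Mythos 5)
Your overall architecture --- auxiliary Monge--Amp\`ere comparison, an Orlicz-space H\"older--Young inequality, and a De Giorgi iteration closing at $r>n$ --- is the right family of ideas, but the quantitative middle step is wrong in a way that breaks the argument. You reduce everything to a uniform bound $\|v_s\|_{\mathrm{L}^\infty}\leqslant C\,\eta\bigl(\mu(\Omega_s)\bigr)$ with $\eta(t)\to0$ as $t\to0^+$. No such bound holds: with your normalization the density $c_s\mathbf{1}_{\Omega_s}\varphi^n$ concentrates as $\Omega_s$ shrinks, so its Orlicz norm blows up and any Kolodziej-type estimate gives a bound on $\|v_s\|_{\mathrm{L}^\infty}$ that \emph{grows}, not decays; moreover, invoking a ``sharp Kolodziej-type estimate'' for the auxiliary equation as a black box is essentially assuming the theorem in the Monge--Amp\`ere case, which is what this PDE approach is meant to reprove. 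The paper never bounds $v_{s\comma l}$ in $\mathrm{L}^\infty$: it uses only the uniform exponential integrability $\int_{\mathcal M}\mathrm{e}^{-\alpha v_{s\comma l}}\bm\omega_{\bm g}^n\leqslant C$ (Tian's $\alpha$-invariant) together with the \emph{pointwise} comparison $\tilde u_s\leqslant C A_{s\comma l}^{1/(n+1)}(-v_{s\comma l}+CA_{s\comma l})^{n/(n+1)}$, and then a classical Young inequality with the weight $\eta(t)=t^n\log^r\bigl(1+t^{n/(n+1)}\bigr)$ converts this into an $\mathrm{L}^{n+1}(\log\mathrm{L})^r$ bound for $\tilde u_s$ with respect to the measure $\frac{\varphi^n}{V_{\bm\chi}}\bm\omega_{\bm g}^n$ --- this is exactly where the $(\log\log\mathrm{L})^r$ weight of the hypothesis is consumed.

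Second, your claimed decay $\eta(t)\sim(\log\log t^{-1})^{-r/n}$ cannot close any De Giorgi iteration: along geometric decrements $\mu(\Omega_{s_k})\sim\theta^k$ one has $\sum_k(\log\log\theta^{-k})^{-r/n}\sim\sum_k(\log k)^{-r/n}={+}\infty$ for every $r$, so there is no ``room to spare.'' The inequality that actually comes out (via \thmref{thm: an inequality of Holder-Young type} applied to $A_s=\int_{\Omega_s}\tilde u_s\frac{\varphi^n}{V_{\bm\chi}}\bm\omega_{\bm g}^n$) is $h(t)\leqslant\frac{C}{t-s}\,h(s)\log^{-r/n}\bigl(1+\frac{1}{h(s)}\bigr)$ --- a \emph{single} logarithm, and in the variable $h(s)=\int_{\Omega_s}\frac{\varphi^n}{V_{\bm\chi}}\bm\omega_{\bm g}^n$ rather than the Lebesgue measure --- and closing this at $\beta/\alpha=r/n>1$ requires the non-standard iteration lemma \thmref{thm: an iteration lemma of De Giorgi type}, not geometric decrements. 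You also omit the a priori energy estimate (\propref{prop: energy estimate}), which uniformly bounds $A_s$; this is indispensable because the exponential-integrability constant produced by the comparison step is $C\mathrm{e}^{\alpha C A_s}$, so without it the constants do not close. (Minor: the exponent in your comparison function should be $n/(n+1)<1$, not $1+1/n$, or the gradient-squared term in the complex Hessian has the wrong sign.)
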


\begin{rmk}
The $\mathrm{L}^1(\log\mathrm{L})^n(\log\log\mathrm{L})^r$ norm
\begin{eq}
\|\cdot\|_{\mathrm{L}^1(\log\mathrm{L})^n(\log\log\mathrm{L})^r(\mathcal M\comma\bm{\omega}_{\bm g}^n)}
\end{eq}
is defined as the Luxemburg norm with respect to the measure space $(\mathcal M\comma\bm{\omega}_{\bm g}^n)$ and the Young function
\begin{eq} \label{eq: weight function}
\Phi(t)=t\log^n(1+t)\log^r\bigl(1+\log(1+t)\bigr).
\end{eq}
Thus we have
\begin{eq} \label{eq: entropy, definition}
\Ent_{n\comma r}\left(\frac{\varphi^n}{V_{\bm\chi}}\right)\leqslant\max\left\{1\comma\int_{\mathcal M} \frac{\varphi^n}{V_{\bm\chi}}\log^n\left(1+\frac{\varphi^n}{V_{\bm\chi}}\right)\log^r\left(1+\log\left(1+\frac{\varphi^n}{V_{\bm\chi}}\right)\right)\bm{\omega}_{\bm g}^n\right\}.
\end{eq}
See \secref{sec: Orlicz spaces and an inequality of Holder-Young type} for more details.
\end{rmk}

\thmref{thm: L^infty estimates on Kahler manifolds} has two main improvements on the result of Guo-Phong-Tong (\thmref{thm: Guo-Phong-Tong}). On one hand, in addition to trivial quantities, the constant $C$ in \thmref{thm: L^infty estimates on Kahler manifolds} only depends on single quantity $\Ent_{n\comma r}\left(\frac{\varphi^n}{V_{\bm\chi}}\right)$ instead of two quantities. On the other hand, the constant $C$ in \thmref{thm: L^infty estimates on Kahler manifolds} depends on the $\mathrm{L}^1(\log\mathrm{L})^n(\log\log\mathrm{L})^r (r>n)$ norm of the right-hand function (in a regularized form) instead of (something stronger than) the $\mathrm{L}^1(\log\mathrm{L})^q (q>n)$ norm. Specifically, when the form of the equation changes from \eqref{eq: fully non-linear elliptic, degenerate} to \eqref{eq: fully non-linear elliptic, degenerate, non-normalized}, the two quantities in \eqref{eq: two quantities} change into
\begin{eq} \label{eq: two quantities, non-normalized}
\frac{\int_{\mathcal M} \varphi^n\bm{\omega}_{\bm g}^n}{V_{\bm g}V_{\bm\chi}}\comma\quad \frac{1}{n^q}\int_{\mathcal M} \frac{V_{\bm g}\varphi^n}{\int_{\mathcal M} \varphi^n\bm\omega_{\bm g}^n}\left|\log\frac{V_{\bm g}\varphi^n}{\int_{\mathcal M} \varphi^n\bm\omega_{\bm g}^n}\right|^q\bm\omega_{\bm g}^n.
\end{eq}
For the case of Monge-Amp\`ere equations,
\begin{eq}
\int_{\mathcal M} \varphi^n\bm\omega_{\bm g}^n=V_{\bm\chi}\comma
\end{eq}
and the first quantity in \eqref{eq: two quantities, non-normalized} boils down to a constant $\frac{1}{V_{\bm g}}$. However, this is not the case in general. Moreover, by \eqref{eq: entropy, definition} it's easy to show that $\Ent_{n\comma r}\left(\frac{\varphi^n}{V_{\bm\chi}}\right)$ can be controlled by the two quantities in \eqref{eq: two quantities, non-normalized}. When there is an extra assumption that $V_{\chi}\geqslant\varepsilon$ for a fixed positive constant $\varepsilon$, which is true if we consider a fixed background K\"ahler metric or a degenerating family of background K\"ahler metrics $\bm\chi_0+t\bm\omega_{\bm g} (0<t\leqslant1)$ with $\bm\chi_0$ being a closed, non-negative and big\footnote{Here ``big'' means $V_{\bm\chi_0}>0$.} (1,1)-form, the quantity $\Ent_{n\comma r}\left(\frac{\varphi^n}{V_{\bm\chi}}\right)$ in \thmref{thm: L^infty estimates on Kahler manifolds} can be replaced by
\begin{eq} \label{eq: entropy, non-degenerate}
\Ent_{n\comma r}(\varphi^n)\triangleq\|\varphi^n\|_{\mathrm{L}^1(\log\mathrm{L})^n(\log\log\mathrm{L})^r(\mathcal M\comma\bm{\omega}_{\bm g}^n)}.
\end{eq}

The proof of \thmref{thm: Guo-Phong-Tong} in \cite{Guo2023,Guo2022} mainly depends on the careful choice of the auxiliary complex Monge-Amp\`ere equations and comparison functions, see also \cite{Guo2023a}. These will also be used in our proof of \thmref{thm: L^infty estimates on Kahler manifolds}. To get an estimate depending only on single non-trivial quantity, we have to calculate carefully. For another, in order to replace the $\mathrm{L}^1(\log\mathrm{L})^q (q>n)$ norm of the right-hand function by the $\mathrm{L}^1(\log\mathrm{L})^n(\log\log\mathrm{L})^r (r>n)$ norm, we apply an inequality of H\"older-Young type (\thmref{thm: an inequality of Holder-Young type}) and an iteration lemma of De Giorgi type (\thmref{thm: an iteration lemma of De Giorgi type}). These two tools are of independent interest since one can use them to improve many other known conclusions whose proofs rely on De Giorgi iteration. For example, we apply them to the improvement of a classical $\mathrm{L}^\infty$ estimate of $\mathrm{W}^{1\comma 2}$ weak solutions to linear elliptic equations (see \thmref{thm: a classical L^infty estimate of W^1,2 weak solutions}), and the proof of the following $\mathrm{L}^\infty$ estimate on Hermitian manifolds similar to \thmref{thm: L^infty estimates on Kahler manifolds}.

\begin{thm} \label{thm: L^infty estimates on Hermitian manifolds}
Let $(\mathcal{M}\comma\bm{\omega}_{\bm g})$ be a compact Hermitian manifold of dimension $n$, $\bm\chi$ be a positive $(1\comma1)$-form on $\mathcal M$ satisfying
\begin{eq}
\varepsilon\bm{\omega}_{\bm g}\leqslant\bm\chi\leqslant M\bm{\omega}_{\bm g}
\end{eq}
for positive constants $\varepsilon$ and $M$, and $\varphi$ be a positive smooth function on $\mathcal M$. Assume that $u\in\mathrm{C}^2(\mathcal M)$ satisfies \eqref{eq: fully non-linear elliptic, degenerate, non-normalized}, where $\Gamma$ is a symmetric open convex cone in $\mathbb{R}^n$ satisfying $\Gamma_n\subset\Gamma\subset\Gamma_1$ and $f$ is a positive $\mathrm{C}^1$ symmetric function defined in $\Gamma$ satisfying the conditions \eqref{item: elliptic}, \eqref{item: structural} and \eqref{eq: Euler inequality}. Fix $r>n$. Then we have $\osc\limits_{\mathcal M} u\leqslant C$, where $C$ is a positive constant depending only on $\mathcal{M}$, $\bm{\omega}_{\bm g}$, $n$, $r$, $\frac{1}{\delta}$, $\Lambda$, $\frac{1}{\varepsilon}$, $M$ and $\Ent_{n\comma r}(\varphi^n)$.
\end{thm}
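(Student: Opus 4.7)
The plan is to adapt the proof of \thmref{thm: L^infty estimates on Kahler manifolds} to the Hermitian setting, exploiting the non-degeneracy $\bm\chi\geqslant\varepsilon\bm{\omega}_{\bm g}$ to compensate for the loss of $\partial\overline\partial$-closedness of the background. Write $\Omega=\int_{\mathcal M}\varphi^n\bm{\omega}_{\bm g}^n$; the non-degeneracy gives $V_{\bm\chi}\geqslant\varepsilon^n V_{\bm g}$, so the quantity $\Ent_{n\comma r}(\varphi^n)$ dominates $\Ent_{n\comma r}\bigl(\tfrac{\varphi^n}{V_{\bm\chi}}\bigr)$ up to a constant depending only on $\varepsilon$, $V_{\bm g}$ and $n$. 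Thus the target estimate is of the same shape as in the K\"ahler case, with torsion being the only new phenomenon to control.

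First, I would set up a family of auxiliary complex Monge-Amp\`ere equations. For each $s>\inf u$, let $\tau_s$ be a smoothed characteristic function of the sublevel set $\{u<-s\}$ normalized so that $\int_{\mathcal M}\tau_s\bm{\omega}_{\bm g}^n=V_{\bm g}$; on the Hermitian manifold $(\mathcal M\comma\bm{\omega}_{\bm g})$ I invoke the Tosatti-Weinkove solvability of $(\bm{\omega}_{\bm g}+\mathrm{i}\partial\overline\partial \psi_s)^n=c_s\tau_s\bm{\omega}_{\bm g}^n$ (with $\sup\psi_s=0$) to produce the comparison potential, where $c_s>0$ is determined by the integral constraint. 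By the assumption $\bm\chi\leqslant M\bm{\omega}_{\bm g}$ together with the non-degenerate lower bound $\bm\chi\geqslant\varepsilon\bm{\omega}_{\bm g}$, the constant $c_s$ and the $\mathrm{L}^\infty$ norm of $\psi_s$ obey the uniform controls established by Guo-Phong in the Hermitian setting.

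Next, I would mimic the maximum principle argument of Guo-Phong-Tong with the test function
\begin{eq*}
\Phi=-\varepsilon_0(-\psi_s+\Lambda_0)^{\frac n{n+1}}+L(u+s)\comma
\end{eq*}
choosing $L$ and $\varepsilon_0$ using $\Lambda$ from the Euler inequality \eqref{eq: Euler inequality} and $\delta$ from condition \eqref{item: structural}. At a minimum point, the pointwise ellipticity inequality combined with the arithmetic-geometric mean inequality and the determinantal lower bound from \eqref{item: structural} produces a lower bound for $\Phi$; the extra Hermitian torsion terms arising because $\mathrm{d}\bm{\omega}_{\bm g}\not\equiv 0$ only contribute linear first-order terms in $\psi_s$ which are absorbed into $L(u+s)$ by enlarging $L$ using the non-degeneracy of $\bm\chi$. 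This yields an integral inequality of the form
\begin{eq*}
(s-s_0)\phi(s)^{\frac 1n}\leqslant C\left(\int_{\mathcal M}\tau_s(-\psi_s)\,\bm{\omega}_{\bm g}^n\right)^{\frac 1n}\phi(s)^{\frac 1n}
\end{eq*}
where $\phi(s)\triangleq\int_{\{u<-s\}}\bm{\omega}_{\bm g}^n$; bounding $-\psi_s$ by $|\psi_s|_{\mathrm{L}^\infty}$ converts this into a control by $\varphi^n$ via the auxiliary equation.

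Finally, with the Hermitian torsion handled, the remainder is purely measure-theoretic. I would apply the H\"older-Young inequality (\thmref{thm: an inequality of Holder-Young type}) to split $\int\tau_s(-\psi_s)\bm{\omega}_{\bm g}^n$ into a piece bounded by $\Ent_{n\comma r}(\varphi^n)$ and a piece involving a dual Young function evaluated on the distribution function $\phi$; this is the step that replaces an $\mathrm{L}^1(\log\mathrm{L})^q$ assumption ($q>n$) by the weaker $\mathrm{L}^1(\log\mathrm{L})^n(\log\log\mathrm{L})^r$ assumption ($r>n$). The resulting inequality for $\phi$ falls into the scope of the De Giorgi-type iteration (\thmref{thm: an iteration lemma of De Giorgi type}), which forces $\phi(s)=0$ for some $s$ depending only on the listed constants, giving the desired oscillation bound. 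The main obstacle I anticipate is the torsion bookkeeping in the maximum principle step: the Hermitian second Bianchi identity does not hold, so care is required to ensure that all error terms remain of lower order relative to $L(u+s)$ uniformly in $s$; but since $\bm\chi$ is bounded from above and below by multiples of $\bm{\omega}_{\bm g}$, these error terms carry only constants depending on $\varepsilon$, $M$ and the fixed geometry of $(\mathcal M\comma\bm{\omega}_{\bm g})$, and do not spoil the final estimate.
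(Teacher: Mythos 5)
Your overall skeleton (auxiliary Monge-Amp\`ere equation, maximum-principle comparison, H\"older-Young inequality, De Giorgi iteration) matches the paper's, but the way you set up the auxiliary equation contains the central gap. You propose to solve a \emph{global} Monge-Amp\`ere equation $(\bm\omega_{\bm g}+\mathrm{i}\partial\overline\partial\psi_s)^n=c_s\tau_s\bm\omega_{\bm g}^n$ on the Hermitian manifold and assert that $c_s$ and $\|\psi_s\|_{\mathrm L^\infty}$ ``obey uniform controls.'' This is precisely the obstruction the paper identifies: on a non-K\"ahler Hermitian manifold $\int_{\mathcal M}(\bm\omega_{\bm g}+\mathrm{i}\partial\overline\partial\psi)^n$ is not a cohomological invariant, so the constant $c_s$ is an unknown produced by the solvability theorem rather than prescribed by an integral normalization, and obtaining a bound on it uniform in $s$ is not available off the shelf. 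The paper circumvents this by replacing the global auxiliary equation with the Dirichlet problem for $\det(\mathrm D^2_{\mathbb C}v_{s\comma l})$ on a Euclidean coordinate ball $\mathrm B_{d_0}\bigl(\bm\rho(\bm z_0)\bigr)$ centered at the minimum point of $u$ (solved by Caffarelli--Kohn--Nirenberg--Spruck), where the normalization is free. The price is the localizing term $-\frac{\varepsilon}{2}\bigl|\bm\rho(\bm z)-\bm\rho(\bm z_0)\bigr|^2$ in $\tilde u_s$, whose complex Hessian $\frac{\varepsilon}{2}\delta_{jk}$ is absorbed by $\chi_{j\overline k}\geqslant\varepsilon g_{j\overline k}$ --- \emph{this}, not ``torsion bookkeeping,'' is where the non-degeneracy of $\bm\chi$ enters. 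The maximum-principle step is purely pointwise and algebraic; no torsion terms or Bianchi identities appear in it.

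Two further structural problems. First, an auxiliary right-hand side $c_s\tau_s\bm\omega_{\bm g}^n$ with $\tau_s$ a smoothed characteristic function cannot drive the comparison argument: property (4) of \propref{prop: properties of F^jk} gives $F_{\bm\chi+\mathrm i\partial\overline\partial u}^{j\overline k}\omega_{j\overline k}\geqslant n\bigl(\delta\det(\bm\omega_{\bm g}^{-1}\bm\omega)/\varphi^n\bigr)^{\frac 1n}$, so the auxiliary equation's right-hand side must contain the factor $\varphi^n$ (and the truncation $\tau_l(\tilde u_s)$ of the solution itself) for the $\varphi^n$ to cancel and the factor $\tilde u_s^{\frac 1n}$ to appear at the maximum point; with a characteristic-function right-hand side you are left with an uncontrolled $\varphi^{-1}$. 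Second, your displayed intermediate inequality carries the same factor $\phi(s)^{\frac 1n}$ on both sides; after cancellation it yields only $(s-s_0)\leqslant C\bigl(\int_{\mathcal M}\tau_s(-\psi_s)\bm\omega_{\bm g}^n\bigr)^{\frac 1n}$ and gives no decay of $\phi$, so it cannot feed \thmref{thm: an iteration lemma of De Giorgi type}. What is actually needed is $h(t)\leqslant\frac{C}{t-s}h(s)\log^{-\frac rn}\bigl(1+\frac{1}{h(s)}\bigr)$ with $h(s)=\int_{\Omega_s}\varphi^n\bm\omega_{\bm g}^n$, obtained from $A_s\geqslant(t-s)h(t)$ together with the bound $A_s\leqslant Ch(s)\log^{-\frac rn}\bigl(1+\frac{1}{h(s)}\bigr)$ coming from \thmref{thm: an inequality of Holder-Young type}. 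Finally, because the localization confines everything to a coordinate ball, one cannot conclude $h(T)=0$ directly; the paper instead applies \corref{cor: another version of the iteration lemma of De Giorgi type} to obtain a \emph{lower} bound on $h(0)$ and converts it, via \propref{prop: L^1 estimate} and a Young-inequality argument, into the oscillation bound --- a step absent from your outline.
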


The above theorem improves the work of \citeauthor{Guo2023b} \cite{Guo2023b} who dealt with the case of $\bm\chi=\bm\omega_{\bm g}$ and $\Ent_{n\comma r}(\varphi^n)$ being replaced by the $\mathrm{L}^1(\log\mathrm{L})^q (q>n)$ norm. On a compact Hermitian manifold, the normalized constant of a Monge-Amp\`ere equation is undetermined in general since the integration of the left-hand item doesn't equal to a constant. As a result, the auxiliary Monge-Amp\`ere equation used for the case of K\"ahler manifolds has to be replaced for the case of Hermitian manifolds by the Dirichlet problem of a complex Monge-Amp\`ere equation in an Euclidean ball near the minimum point of $u$, which in turn causes a trouble term in the comparison function. The background metric is therefore required to be non-degenerate (i.e. $\bm\chi\geqslant\varepsilon\bm\omega_{\bm g}$) in \thmref{thm: L^infty estimates on Hermitian manifolds} to produce an extra good term cancelling out the trouble term. \citeauthor{Guedj2023} \cite{Guedj2023} proved that the oscillation of any admissible solution to the degenerate Hessian equations on compact Hermitian manifolds can be controlled by the $\mathrm{L}^1(\log\mathrm{L})^n(\log\log\mathrm{L})^r(r>n)$ norm of the right-hand function (in a regularized form). However, their method seems not adaptable to the general equations even if the background metrics are non-degenerate.

This paper is organized as follows. In \secref{sec: Orlicz spaces and an inequality of Holder-Young type}, we recall some basic definitions and facts about Orlicz spaces, and prove an inequality of H\"older-Young type. An iteration lemma of De Giorgi type and its corollaries are proved in \secref{sec: An iteration lemma of De Giorgi type}. Preliminaries to $\mathrm{L}^\infty$ estimates of fully non-linear elliptic equations on complex manifolds are given in \secref{sec: Preliminaries to L^infty estimates of fully non-linear elliptic equations on complex manifolds}. \thmref{thm: L^infty estimates on Kahler manifolds} and \thmref{thm: L^infty estimates on Hermitian manifolds} are proved in \secref{sec: L^infty estimates on Kahler manifolds} and \secref{sec: L^infty estimates on Hermitian manifolds} respectively. In \secref{sec: Gluing lemma of smooth radial plurisubharmonic functions and an example}, we prove a gluing lemma of smooth, radial, strictly plurisubharmonic functions, i.e. \thmref{thm: gluing lemma of smooth, radial, strictly plurisubharmonic functions}, and use this gluing lemma to construct an explicit example to show that the $\mathrm{L}^\infty$ estimates in \thmref{thm: L^infty estimates on Kahler manifolds} and \thmref{thm: L^infty estimates on Hermitian manifolds} may fail when $r\leqslant n-1$. Moreover, we give three appendices, i.e. \appref{app: Properties of the function Phi_pqr in Section 2}, \appref{app: Proof of the gluing lemma of smooth convex functions defined on disjointed intervals} and \appref{app: Properties of the function f_varepsilon in Example 7.7}.

Last but not least, the notations $C_1$, $C_2$, $C_3$ and so forth in our proofs denote positive constants depending only on some trivial quantities unless otherwise noted. What quantities such a constant exactly depends on will be clear by reading the context.


\section{Orlicz spaces and an inequality of H\"older-Young type} \label{sec: Orlicz spaces and an inequality of Holder-Young type}

In this section, we recall some basic definitions and facts about Orlicz spaces, and prove an inequality of H\"older-Young type which will be used in the following sections. One can refer to \cite{Musielak1983} for a detailed discussion of Orlicz spaces. See also \cite{Adams2003}.

\begin{definition}
A real-valued function $\Phi$ defined on $[0\comma{+}\infty)$ is called a Young function if $\Phi$ is a convex, strictly increasing function such that $\Phi(0)=0$.
\end{definition}

Any Young function $\Phi$ defined as above is continuous and satisfies
\begin{eq}
\lim_{t\to{+}\infty} \Phi(t)={+}\infty.
\end{eq}
When $p\geqslant 1$ and $q\comma r\geqslant 0$, the function
\begin{eq} \label{eq: Phi_pqr}
\Phi_{p\comma q\comma r}(t)\triangleq t^p\log^q(1+t)\log^r\bigl(1+\log(1+t)\bigr)
\end{eq}
is a Young function --- its convexity is proved in \corref{cor: convexity}. Another common example of Young functions is $\mathrm{e}^{t^p}-1 (p\geqslant 1)$.
\begin{definition}
Let $(\Omega\comma\mu)$ be a complete measure space and $\Phi$ be a Young function. The Orlicz space with respect to $(\Omega\comma\mu)$ and $\Phi$ is defined as
\begin{eq}
\left\{\text{$f$ is a measurable function on $(\Omega\comma\mu)$}\middle|\exists c>0\comma \int_{\Omega} \Phi\left(\frac{|f|}{c}\right)\dif\mu\leqslant1\right\}.
\end{eq}
This space is denoted by $\mathrm{L}^\Phi(\Omega\comma\mu)$.
\end{definition}

For the rest of this section, we always assume that $(\Omega\comma\mu)$ is a complete measure space and $\Phi$ is a Young function. The Orlicz spaces are all linear spaces. It's easy to see that
\begin{eq} \begin{aligned}
\mathrm{L}^\Phi(\Omega\comma\mu)&=\left\{\text{$f$ is a measurable function on $(\Omega\comma\mu)$}\middle|\exists c>0\comma \int_{\Omega} \Phi\left(\frac{|f|}{c}\right)\dif\mu<{+}\infty\right\} \\
&\supset\left\{\text{$f$ is a measurable function on $(\Omega\comma\mu)$}\middle|\int_{\Omega} \Phi\bigl(|f|\bigr)\dif\mu<{+}\infty\right\}.
\end{aligned} \end{eq}
Note that the ``$\supset$'' above can't be replaced by ``$=$'' in general.

\begin{definition}
We say that $\Phi$ satisfies the global $\Delta_2$-condition if there exists a positive constant $K$ such that for any $t\geqslant0$,
\begin{eq} \label{eq: Delta_2}
\Phi(2t)\leqslant K\Phi(t).
\end{eq}
We say that $\Phi$ satisfies the $\Delta_2$-condition near infinity if there exist positive constants $K$ and $t_0$ such that \eqref{eq: Delta_2} holds for any $t\geqslant t_0$.
\end{definition}

When $p\geqslant 1$ and $q\comma r\geqslant 0$, the Young function $\Phi_{p\comma q\comma r}$ satisfies the global $\Delta_2$-condition. On the other hand, the Young function $\mathrm{e}^t-1$ doesn't satisfy the $\Delta_2$-condition even near infinity. 

\begin{prop} \label{prop: characterization}
If either $\mu(\Omega)={+}\infty$ and $\Phi$ satisfies the global $\Delta_2$-condition or $\mu(\Omega)<{+}\infty$ and $\Phi$ satisfies the $\Delta_2$-condition near infinity, then
\begin{eq}
\mathrm{L}^\Phi(\Omega\comma\mu)=\left\{\text{$f$ is a measurable function on $(\Omega\comma\mu)$}\middle|\int_{\Omega} \Phi\bigl(|f|\bigr)\dif\mu<{+}\infty\right\}.
\end{eq}
Moreover, the converse proposition is true if $\mu$ is assumed additionally to be $\sigma$-finite and atomless.
\end{prop}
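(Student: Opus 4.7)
The plan is to prove both directions of the equivalence separately. The inclusion $\{f:\int_\Omega \Phi(|f|)\dif\mu<+\infty\}\subseteq\mathrm{L}^\Phi(\Omega,\mu)$ is immediate from the definition by taking $c=1$, so the substantive work lies in the reverse inclusion under each $\Delta_2$ hypothesis, together with a counterexample construction for the converse.

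For the forward inclusion, I take $f\in\mathrm{L}^\Phi(\Omega,\mu)$ with witness $c>0$ satisfying $\int_\Omega \Phi(|f|/c)\dif\mu\leqslant 1$, and bound $\int_\Omega \Phi(|f|)\dif\mu$. If $c\leqslant 1$, monotonicity of $\Phi$ gives $\Phi(|f|)\leqslant \Phi(|f|/c)$ pointwise and the bound is $1$. If $c>1$, I pick the smallest integer $k$ with $2^k\geqslant c$, so $\Phi(|f|)\leqslant \Phi\bigl(2^k(|f|/c)\bigr)$; iterating the $\Delta_2$-condition $\Phi(2t)\leqslant K\Phi(t)$ yields $\Phi(2^k t)\leqslant K^k\Phi(t)$, and integration gives $\int_\Omega \Phi(|f|)\dif\mu\leqslant K^k$. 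This handles the global $\Delta_2$ case. In the finite-measure case with $\Delta_2$ only near infinity, I split $\Omega$ into $A=\{|f|/c\geqslant t_0\}$ and its complement: on $A^c$ the integrand $\Phi(|f|)$ is dominated by the constant $\Phi(ct_0)$ whose integral is finite because $\mu(\Omega)<+\infty$, and on $A$ each of the $k$ successive doublings in $\Phi(2^k(|f|/c))$ keeps the argument above $t_0$, so the same iteration applies.

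For the converse I argue by contrapositive, assuming $\mu$ is $\sigma$-finite and atomless and that the appropriate $\Delta_2$-condition fails. Failure produces a sequence $t_n$ with $\Phi(2t_n)\geqslant n\,2^n\Phi(t_n)$, with the extra property $t_n\to+\infty$ in the finite-measure case, where only the near-infinity $\Delta_2$ is assumed to fail. Using the Sierpinski-type selection available on $\sigma$-finite atomless spaces, I choose pairwise disjoint measurable sets $E_n$ with $\mu(E_n)=C/\bigl(2^n\Phi(t_n)\bigr)$, where $C>0$ is small enough that $\sum_n \mu(E_n)\leqslant \mu(\Omega)$ — the sum is finite in the finite-measure case because $\Phi(t_n)\to+\infty$, while in the infinite-measure case any $C$ works. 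Setting $f=\sum_n 2t_n \mathbf{1}_{E_n}$, I get $\int_\Omega \Phi(|f|/2)\dif\mu=C\sum_n 2^{-n}\leqslant C$, so $f\in\mathrm{L}^\Phi(\Omega,\mu)$, whereas $\int_\Omega \Phi(|f|)\dif\mu\geqslant C\sum_n n=+\infty$, contradicting the characterization.

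The main obstacle is the converse construction: one has to simultaneously arrange integrability of $\Phi(|f|/c)$ and non-integrability of $\Phi(|f|)$, with disjoint sets $E_n$ of prescribed small positive measures actually fitting inside $\Omega$. This is exactly where the atomless and $\sigma$-finite hypotheses enter, and where the blow-up $\Phi(2t_n)/\Phi(t_n)\to+\infty$ furnished by the failure of $\Delta_2$ generates the required gap between the two integrals.
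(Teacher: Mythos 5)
Your argument is correct and is essentially the standard proof that the paper delegates to the cited reference (Musielak, pp.~52--54): the reverse inclusion via iterated doubling of the $\Delta_2$ constant (splitting off the set where the argument falls below $t_0$ in the finite-measure case), and the converse via a disjoint-sets construction on a $\sigma$-finite atomless space exploiting the blow-up of $\Phi(2t_n)/\Phi(t_n)$. No gaps worth flagging.
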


\begin{proof}
See \cite[pp.~52--54]{Musielak1983}.
\end{proof}

\begin{definition}
The Luxemburg norm with respect to $(\Omega\comma\mu)$ and $\Phi$ is defined as
\begin{eq}
\|f\|_{\mathrm{L}^\Phi(\Omega\comma\mu)}\triangleq\inf\left\{c>0\middle|\int_{\Omega} \Phi\left(\frac{|f|}{c}\right)\dif\mu\leqslant1\right\}.
\end{eq}
\end{definition}

One can prove that the Orlicz space with the corresponding Luxemburg norm
\begin{eq}
\left(\mathrm{L}^\Phi(\Omega\comma\mu)\comma\|\cdot\|_{\mathrm{L}^\Phi(\Omega\comma\mu)}\right)
\end{eq}
is a Banach space. Henceforth, we always endow an Orlicz space with its corresponding Luxemburg norm. By the monotone convergence theorem one can show that
\begin{eq} \label{eq: to be used in the proof of the inequality of Holder-Young type}
\int_{\Omega} \Phi\left(\frac{|f|}{\|f\|_{\mathrm{L}^\Phi(\Omega\comma\mu)}}\right)\dif\mu\leqslant1
\end{eq}
for any $f\in\mathrm{L}^\Phi(\Omega\comma\mu)$. For the rest of this section, we fix $p\geqslant1$ and $q\comma r\geqslant 0$. Note that
\begin{eq}
\mathrm{L}^{\Phi_{p\comma0\comma0}}(\Omega\comma\mu)=\mathrm{L}^p(\Omega\comma\mu).
\end{eq}

\begin{definition} \label{definition: L^p(logL)^q(loglogL)^r}
The Banach space $\mathrm{L}^p(\log\mathrm{L})^q(\log\log\mathrm{L})^r(\Omega\comma\mu)$ is defined as
\begin{eq}
\left(\mathrm{L}^{\Phi_{p\comma q\comma r}}(\Omega\comma\mu)\comma\|\cdot\|_{\mathrm{L}^{\Phi_{p\comma q\comma r}}(\Omega\comma\mu)}\right)\comma
\end{eq}
where $\|\cdot\|_{\mathrm{L}^{\Phi_{p\comma q\comma r}}(\Omega\comma\mu)}$ is called the $\mathrm{L}^p(\log\mathrm{L})^q(\log\log\mathrm{L})^r(\Omega\comma\mu)$ norm and will be denoted by $\|\cdot\|_{\mathrm{L}^p(\log\mathrm{L})^q(\log\log\mathrm{L})^r(\Omega\comma\mu)}$.
\end{definition}

The following  proposition is useful for estimates of $\mathrm{L}^p(\log\mathrm{L})^q(\log\log\mathrm{L})^r(\Omega\comma\mu)$ norms.

\begin{prop} \label{prop: estimates of norms}
Let $f$ be a measurable function on $(\Omega\comma\mu)$. Fix $p\geqslant1$ and $q\comma r\geqslant 0$. Assume that
\begin{eq} \label{eq: assumption}
\int_\Omega \left(\frac{|f|}{c}\right)^p\log^q\left(1+\frac{|f|}{c}\right)\log^r\left(1+\log\left(1+\frac{|f|}{c}\right)\right)\dif\mu\leqslant M
\end{eq}
for some postive constants $c$ and $M$. Then we have
\begin{eq}
\|f\|_{\mathrm{L}^p(\log\mathrm{L})^q(\log\log\mathrm{L})^r(\Omega\comma\mu)}\leqslant c\max\left\{1\comma M^{\frac 1p}\right\}.
\end{eq}
\end{prop}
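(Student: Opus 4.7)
The plan is to exhibit the value $c' \triangleq c\max\{1, M^{1/p}\}$ as an admissible competitor in the infimum defining the Luxemburg norm, i.e.\ to verify
\begin{eq*}
\int_\Omega \Phi_{p,q,r}\Bigl(\frac{|f|}{c'}\Bigr)\dif\mu\leqslant 1.
\end{eq*}
Once this is established, the definition of $\|\cdot\|_{\mathrm{L}^{\Phi_{p,q,r}}(\Omega,\mu)}$ immediately yields the asserted bound.

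To verify this inequality, I would split into two cases according to whether $M\leqslant1$ or $M>1$. In the first case, $c'=c$ and the desired inequality is just the hypothesis \eqref{eq: assumption} with the right-hand side $M$ replaced by the weaker bound $1$. In the second case, $c'=cM^{1/p}\geqslant c$, and the key point is that rescaling $c\mapsto cM^{1/p}$ only decreases the argument of $\Phi_{p,q,r}$. Since $\log(1+\cdot)$ and $\log(1+\log(1+\cdot))$ are increasing on $[0,+\infty)$, we have
\begin{eq*}
\log^q\Bigl(1+\frac{|f|}{c'}\Bigr)\log^r\Bigl(1+\log\bigl(1+\frac{|f|}{c'}\bigr)\Bigr)\leqslant \log^q\Bigl(1+\frac{|f|}{c}\Bigr)\log^r\Bigl(1+\log\bigl(1+\frac{|f|}{c}\bigr)\Bigr).
\end{eq*}
Combining this pointwise estimate with the identity $\bigl(|f|/c'\bigr)^p=M^{-1}\bigl(|f|/c\bigr)^p$, and then invoking the hypothesis \eqref{eq: assumption}, I obtain
\begin{eq*}
\int_\Omega \Phi_{p,q,r}\Bigl(\frac{|f|}{c'}\Bigr)\dif\mu\leqslant \frac{1}{M}\int_\Omega \Bigl(\frac{|f|}{c}\Bigr)^p\log^q\Bigl(1+\frac{|f|}{c}\Bigr)\log^r\Bigl(1+\log\bigl(1+\frac{|f|}{c}\bigr)\Bigr)\dif\mu\leqslant \frac{M}{M}=1,
\end{eq*}
which completes both cases.

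There is no real obstacle here; the whole proposition is a direct consequence of the homogeneity of the $p$-th power factor together with the monotonicity of the logarithmic factors in $\Phi_{p,q,r}$. The only mild subtlety is remembering to isolate the case $M\leqslant 1$, so that one does \emph{not} rescale by $M^{1/p}<1$ (which would enlarge $|f|/c$ and invalidate the monotonicity argument above).
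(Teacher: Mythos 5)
Your proof is correct, and the overall structure matches the paper's: both exhibit $cM^{\frac 1p}$ as an admissible competitor in the infimum defining the Luxemburg norm and reduce to showing $\int_\Omega \Phi_{p\comma q\comma r}\bigl(|f|/(cM^{\frac 1p})\bigr)\dif\mu\leqslant 1$ when $M>1$. The difference lies in how the key pointwise inequality $\Phi_{p\comma q\comma r}\bigl(t/M^{\frac 1p}\bigr)\leqslant M^{-1}\Phi_{p\comma q\comma r}(t)$ is justified. The paper derives it from the convexity of $\Psi(t)=\Phi_{p\comma q\comma r}\bigl(t^{\frac 1p}\bigr)$ (which in turn rests on the nontrivial computation in \propref{prop: strong convexity} of \appref{app: Properties of the function Phi_pqr in Section 2}), writing $|f|^p/(c^pM)$ as a convex combination of $|f|^p/c^p$ and $0$. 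You instead observe directly that the $p$-th power factor scales exactly by $M^{-1}$ while the two logarithmic factors only decrease when the argument shrinks; this needs nothing beyond monotonicity of $\log(1+\cdot)$. Your route is more elementary and self-contained for this particular proposition, though the convexity of $\Psi$ is used again elsewhere in the paper (e.g. in the final display of the paper's proof and implicitly in \propref{prop: estimate of integration}), so the appendix result is not thereby rendered superfluous. Your handling of the case $M\leqslant 1$ and your caution about not rescaling by $M^{\frac 1p}<1$ are both correct.
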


\begin{proof}
We may as well assume that $M>1$. The assumption \eqref{eq: assumption} reads
\begin{eq}
\int_\Omega \Phi_{p\comma q\comma r}\left(\frac{|f|}{c}\right)\dif\mu\leqslant M.
\end{eq}
By \propref{prop: strong convexity} we note that
\begin{eq}
\Psi(t)\triangleq\Phi_{p\comma q\comma r}\left(t^{\frac 1p}\right)
\end{eq}
is a convex function on $[0\comma{+}\infty)$. As a result, we have
\begin{eq} \begin{aligned}
\int_\Omega \Phi_{p\comma q\comma r}\left(\frac{|f|}{cM^{\frac 1p}}\right)\dif\mu&=\int_\Omega \Psi\left(\frac{|f|^p}{c^pM}+\frac{M-1}{M}0\right)\dif\mu \\
&\leqslant \int_\Omega \left(\frac 1M \Psi\left(\frac{|f|^p}{c^p}\right)+\frac{M-1}{M}\Psi(0)\right)\dif\mu\leqslant 1.
\end{aligned} \end{eq}
This shows that
\begin{eq}
\|f\|_{\mathrm{L}^p(\log\mathrm{L})^q(\log\log\mathrm{L})^r(\Omega\comma\mu)}\leqslant cM^{\frac 1p}.
\end{eq}
\end{proof}

The following  proposition can be viewed as some kind of converse proposition of \propref{prop: estimates of norms}.

\begin{prop} \label{prop: estimate of integration}
Fix $p\geqslant1$ and $q\comma r\geqslant 0$. Then for any $f\in\mathrm{L}^p(\log\mathrm{L})^q(\log\log\mathrm{L})^r(\Omega\comma\mu)$, we have
\begin{eq} \begin{aligned}
&\mathrel{\phantom{=}}\int_\Omega |f|^p\log^q\bigl(1+|f|\bigr)\log^r\Bigl(1+\log\bigl(1+|f|\bigr)\Bigr)\dif\mu \\
&\leqslant\max\left\{\|f\|_{\mathrm{L}^p(\log\mathrm{L})^q(\log\log\mathrm{L})^r(\Omega\comma\mu)}^p\comma\|f\|_{\mathrm{L}^p(\log\mathrm{L})^q(\log\log\mathrm{L})^r(\Omega\comma\mu)}^{p+q+r}\right\}.
\end{aligned} \end{eq}
\end{prop}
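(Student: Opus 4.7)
The plan is to normalize and reduce the claim to an elementary pointwise comparison for the Young function $\Phi_{p,q,r}$. Setting $N = \|f\|_{\mathrm{L}^p(\log\mathrm{L})^q(\log\log\mathrm{L})^r(\Omega,\mu)}$, the defining bound \eqref{eq: to be used in the proof of the inequality of Holder-Young type} reads
\begin{equation*}
\int_\Omega \Phi_{p,q,r}\!\left(\frac{|f|}{N}\right)\dif\mu \leqslant 1.
\end{equation*}
Writing $|f| = N\cdot(|f|/N)$ and integrating a pointwise bound of the form $\Phi_{p,q,r}(Nt) \leqslant C(N)\,\Phi_{p,q,r}(t)$ will then yield $\int_\Omega \Phi_{p,q,r}(|f|)\,\dif\mu \leqslant C(N)$. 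The task reduces to showing that one may take $C(N) = N^p$ when $N \leqslant 1$ and $C(N) = N^{p+q+r}$ when $N \geqslant 1$.

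The case $N \leqslant 1$ is immediate: monotonicity of $\log$ gives $\log^q(1+Nt) \leqslant \log^q(1+t)$ and $\log^r(1+\log(1+Nt)) \leqslant \log^r(1+\log(1+t))$, whence $\Phi_{p,q,r}(Nt) \leqslant N^p\,\Phi_{p,q,r}(t)$ pointwise, and integration yields the bound by $\|f\|^p$.

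The case $N \geqslant 1$ is the main obstacle, and I would handle it via the elementary calculus lemma: for $N \geqslant 1$ and $s \geqslant 0$,
\begin{equation*}
\log(1+Ns) \leqslant N\log(1+s),
\end{equation*}
which follows because both sides vanish at $s=0$ and $N/(1+Ns) \leqslant N/(1+s)$ for all $s \geqslant 0$ when $N \geqslant 1$. Applying the lemma with $s=t$ gives $\log^q(1+Nt) \leqslant N^q \log^q(1+t)$. For the doubly nested factor, I would first use monotonicity together with the first application to obtain $\log(1+\log(1+Nt)) \leqslant \log(1+N\log(1+t))$, and then apply the lemma again with $s = \log(1+t)$ to conclude $\log^r(1+\log(1+Nt)) \leqslant N^r \log^r(1+\log(1+t))$. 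Multiplying the three factors produces $\Phi_{p,q,r}(Nt) \leqslant N^{p+q+r}\,\Phi_{p,q,r}(t)$, and integration finishes this case.

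The only delicate point is the proper iteration of the $\log$-dilation inequality through the nested logarithm to recover the exponent $N^r$; once that step is in place, the $N^p\cdot N^q\cdot N^r$ assembly and the split into the two regimes of $N$ close the proof.
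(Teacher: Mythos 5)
Your proposal is correct and follows essentially the same route as the paper: both arguments start from $\int_\Omega \Phi_{p,q,r}(|f|/N)\,\mathrm{d}\mu\leqslant 1$ and use the same elementary dilation inequality $\log(1+Ns)\leqslant N\log(1+s)$ (equivalently, $1+dt\geqslant(1+t)^d$ for $0<d\leqslant1$), applied once to the inner logarithm and once more to the nested one, with the same split into the regimes $N\leqslant1$ and $N>1$. Phrasing it as a pointwise bound $\Phi_{p,q,r}(Nt)\leqslant C(N)\,\Phi_{p,q,r}(t)$ before integrating is only a cosmetic reorganization of the paper's computation.
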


\begin{proof}
Let $M$ denote $\|f\|_{\mathrm{L}^p(\log\mathrm{L})^q(\log\log\mathrm{L})^r(\Omega\comma\mu)}$. By \eqref{eq: to be used in the proof of the inequality of Holder-Young type} we have
\begin{eq}
\int_\Omega \left(\frac{|f|}{M}\right)^p\log^q\left(1+\frac{|f|}{M}\right)\log^r\left(1+\log\left(1+\frac{|f|}{M}\right)\right)\dif\mu\leqslant 1\comma
\end{eq}
which implies
\begin{eq}
\int_\Omega |f|^p\log^q\left(1+\frac{|f|}{M}\right)\log^r\left(1+\log\left(1+\frac{|f|}{M}\right)\right)\dif\mu\leqslant M^p.
\end{eq}
When $M\leqslant 1$, it's easy to see that
\begin{eq}
\int_\Omega |f|^p\log^q\bigl(1+|f|\bigr)\log^r\Bigl(1+\log\bigl(1+|f|\bigr)\Bigr)\dif\mu\leqslant M^p.
\end{eq}
When $M>1$, applying an elementary inequality
\begin{eq}
1+dt\geqslant(1+t)^d (t\geqslant0\comma 0<d\leqslant1)\comma
\end{eq}
we obtain
\begin{eq} \begin{aligned}
M^p&\geqslant\int_\Omega |f|^p\log^q\left(1+\frac{|f|}{M}\right)\log^r\left(1+\log\left(1+\frac{|f|}{M}\right)\right)\dif\mu \\
&\geqslant\int_\Omega |f|^pM^{-q}\log^q\bigl(1+|f|\bigr)\log^r\Bigl(1+M^{-1}\log\bigl(1+|f|\bigr)\Bigr)\dif\mu \\
&\geqslant M^{-q-r}\int_\Omega |f|^p\log^q\bigl(1+|f|\bigr)\log^r\Bigl(1+\log\bigl(1+|f|\bigr)\Bigr)\dif\mu.
\end{aligned} \end{eq}
This indicates that
\begin{eq}
\int_\Omega |f|^p\log^q\bigl(1+|f|\bigr)\log^r\Bigl(1+\log\bigl(1+|f|\bigr)\Bigr)\dif\mu\leqslant M^{p+q+r}.
\end{eq}
\end{proof}

Now we prove a useful inequality of H\"older-Young type.

\begin{thm} \label{thm: an inequality of Holder-Young type}
Let $(\Omega\comma\mu)$ be a complete measure space with $\mu(\Omega)\in(0\comma{+}\infty)$. Fix $p\geqslant 1$ and $q\comma r\geqslant 0$. Then for any $f\in\mathrm{L}^p(\log\mathrm{L})^q(\log\log\mathrm{L})^r(\Omega\comma\mu)$, we have
\begin{eq} \label{eq: an inequality of Holder-Young type}
\int_\Omega |f|\dif\mu\leqslant \frac{2C\|f\|_{\mathrm{L}^p(\log\mathrm{L})^q(\log\log\mathrm{L})^r(\Omega\comma\mu)}\bigl(\mu(\Omega)\bigr)^{1-\frac 1p}}{\log^{\frac qp}\left(1+\frac{1}{\mu(\Omega)}\right)\log^{\frac rp}\biggl(1+\log\left(1+\frac{1}{\mu(\Omega)}\right)\biggr)}\comma
\end{eq}
where the constant $C\triangleq\left(p+\frac{q}{2}+\frac{r}{4}\right)^{\frac{q+r}{p}}$.
\end{thm}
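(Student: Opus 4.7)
The plan is to combine a dyadic splitting of $\int_\Omega |f|\dif\mu$ with H\"older's inequality and a weighted Chebyshev-type bound derived from the Luxemburg norm estimate. Normalizing so that $M:=\|f\|_{\mathrm{L}^p(\log\mathrm{L})^q(\log\log\mathrm{L})^r(\Omega\comma\mu)}=1$, \eqref{eq: to be used in the proof of the inequality of Holder-Young type} gives $\int_\Omega \Phi_{p\comma q\comma r}(|f|)\dif\mu\leqslant 1$. Write $V=\mu(\Omega)$, $a=\log(1+1/V)$, $b=\log(1+a)$, and set $\rho=p+q/2+r/4$, so that $C=\rho^{(q+r)/p}$.

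For a threshold $t_0>0$, the low-level part $\int_{\{|f|\leqslant t_0\}}|f|\dif\mu$ is bounded by $t_0V$; on $\{|f|>t_0\}$ the monotonicity of the denominator in the identity $|f|^p=\Phi_{p\comma q\comma r}(|f|)/[\log^q(1+|f|)\log^r(1+\log(1+|f|))]$ gives the pointwise bound $|f|^p\leqslant \Phi_{p\comma q\comma r}(|f|)/[\log^q(1+t_0)\log^r(1+\log(1+t_0))]$, so H\"older's inequality with exponents $p$ and $p/(p-1)$ yields
\[
\int_{\{|f|>t_0\}}|f|\dif\mu \leqslant \frac{V^{1-1/p}}{\log^{q/p}(1+t_0)\log^{r/p}(1+\log(1+t_0))}.
\]
I would then choose $t_0=(1+1/V)^{1/\rho}-1$, so that $\log(1+t_0)=a/\rho$ exactly, and Bernoulli's inequality $(1+a/\rho)^\rho\geqslant 1+a$ (valid for $\rho\geqslant 1$) gives $\log(1+\log(1+t_0))\geqslant b/\rho$. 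The high-part bound becomes exactly $CV^{1-1/p}/(a^{q/p}b^{r/p})$.

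For the low part, the subadditivity of $t\mapsto t^{1/\rho}$ gives $t_0\leqslant V^{-1/\rho}$, so $t_0V\leqslant V^{1-1/\rho}$. Using $1/p-1/\rho=\sigma/(p\rho)$ with $\sigma=q/2+r/4$, the desired bound $t_0V\leqslant CV^{1-1/p}/(a^{q/p}b^{r/p})$ reduces to the sharp inequality $V^{\sigma/\rho}a^q b^r\leqslant\rho^{q+r}$. Factoring $V^{\sigma/\rho}=(V^{1/(2\rho)})^q(V^{1/(4\rho)})^r$, this follows from the two one-variable estimates $V^{1/(2\rho)}a\leqslant\rho$ and $V^{1/(4\rho)}b\leqslant\rho$; after substituting $y=1/V$ each reduces to a calculus statement of the form $\log(1+y)\leqslant\rho y^{1/(2\rho)}$ or $\log(1+\log(1+y))\leqslant\rho y^{1/(4\rho)}$, settled by locating the critical points and checking the sign there. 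Adding the two contributions produces $\int_\Omega |f|\dif\mu\leqslant 2CV^{1-1/p}/(a^{q/p}b^{r/p})$, which becomes the stated inequality after un-normalizing.

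The main obstacle is the sharp constant tracking in this last step: the exponents $1/(2\rho)$ and $1/(4\rho)$ are forced to match exactly the coefficients $1/2$ and $1/4$ of $\sigma=q/2+r/4$ in order for the two one-variable inequalities to each close with constant $\rho$ on the right, which is how the otherwise curious constant $C=(p+q/2+r/4)^{(q+r)/p}$ arises. Verifying these one-variable inequalities everywhere (and not merely asymptotically) is the nontrivial technical point, since both sides vanish at the origin with matching first-order behavior and the function $\log(1+y)/y^{1/(2\rho)}$ has a nontrivial interior maximum whose value must be shown to be at most $\rho$.
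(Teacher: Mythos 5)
Your proposal is correct, and it takes a genuinely different route from the paper. The paper proves a pointwise Young-type inequality $ab\leqslant\Phi_{p\comma q\comma r}(a)+\Psi^{-1}(b)$ with $\Psi=\frac1C\Phi_{p\comma q\comma r}^{1/p}$ (the hard step being the claim $b\eta^{-1}(b)\leqslant\eta^{-1}(C^pb^p)$ about the inverse of $\eta=\Phi_{p\comma q\comma r}'/p$-type functions), applies it with $a=|f(\bm x)|/\|f\|$ and $b=\Psi(1/\mu(\Omega))$, and integrates; you instead truncate at a level $t_0$ tuned to $\mu(\Omega)$, control the high part by H\"older plus the Chebyshev-type bound $|f|^p\leqslant\Phi_{p\comma q\comma r}(|f|)/[\log^q(1+t_0)\log^r(1+\log(1+t_0))]$, and control the low part by $t_0\mu(\Omega)$. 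Your route avoids inverse functions entirely and makes transparent where the constant $\rho=p+\frac q2+\frac r4$ comes from (balancing the two parts), at the cost of the threshold calibration; the paper's route packages everything into one duality inequality. I checked your reductions: the Bernoulli step, the subadditivity bound $t_0\leqslant V^{-1/\rho}$, and the reduction of the low-part estimate to $V^{1/(2\rho)}a\leqslant\rho$ and $V^{1/(4\rho)}b\leqslant\rho$ are all sound. The one step you leave as a sketch — the two one-variable inequalities — is true, and in fact much easier than you fear: with $y=1/V$, the bound $(1+y)^{1/\rho}\leqslant1+y^{1/\rho}$ followed by $\log(1+u)\leqslant\sqrt u$ gives $\log(1+y)\leqslant\rho\,y^{1/(2\rho)}$ directly, and applying $\log(1+u)\leqslant\sqrt u$ once more gives $\log\bigl(1+\log(1+y)\bigr)\leqslant\sqrt{\rho}\,y^{1/(4\rho)}\leqslant\rho\,y^{1/(4\rho)}$; these are exactly the elementary inequalities the paper itself invokes, so no critical-point analysis is needed. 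A direct sign analysis of $\rho y^{1/(2\rho)}-\log(1+y)$ would be messier than you suggest (the difference is not monotone, so you would genuinely have to evaluate at the interior critical point), which is another reason to prefer the composition trick.
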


\begin{proof}
We may as well assume that $(p-1)^2+q^2+r^2>0$. Define
\begin{al}
\eta(t)&\triangleq  t^{p-1}\log^q(1+t)\log^r\bigl(1+\log(1+t)\bigr) (t\geqslant0)\comma \\\Psi(t)&\triangleq\frac{1}{C}t^{1-\frac 1p}\log^{\frac qp}(1+t)\log^{\frac rp}\bigl(1+\log(1+t)\bigr)=\frac{1}{C}\bigl(\eta(t)\bigr)^{\frac 1p} (t\geqslant0).
\end{al}
Let $\eta^{-1}$ and $\Psi^{-1}$ denote the inverse function of $\eta$ and that of $\Psi$ respectively. For any $a\comma b\geqslant0$, by the classical Young inequality we have
\begin{eq}
ab\leqslant\int_0^a \eta(t)\dif t+\int_0^b \eta^{-1}(t)\dif t\leqslant\Phi_{p\comma q\comma r}(a)+b\eta^{-1}(b).
\end{eq}
Note that
\begin{eq}
\Psi^{-1}(b)=\eta^{-1}(C^pb^p).
\end{eq}
We claim that
\begin{eq} \label{eq: claim}
b\eta^{-1}(b)\leqslant\eta^{-1}(C^pb^p).
\end{eq}
In fact, with $\alpha$ denoting $\eta^{-1}(b)$,
\begin{eq}
b=\alpha^{p-1}\log^q(1+\alpha)\log^r\bigl(1+\log(1+\alpha)\bigr).
\end{eq}
Then we have
\begin{eq} \label{eq: eta(b alpha)} \begin{aligned}
\eta(b\alpha)&=(b\alpha)^{p-1}\log^q(1+b\alpha)\log^r\bigl(1+\log(1+b\alpha)\bigr) \\
&=b^{p-1}\alpha^{p-1}\log^q\Bigl(1+\alpha^p\log^q(1+\alpha)\log^r\bigl(1+\log(1+\alpha)\bigr)\Bigr) \\
&\phantom{=}\cdot\log^r\biggl(1+\log\Bigl(1+\alpha^p\log^q(1+\alpha)\log^r\bigl(1+\log(1+\alpha)\bigr)\Bigr)\biggr).
\end{aligned} \end{eq}
Recall three elementary inequalities:
\begin{ga}
\log(1+t)\leqslant t^{\frac 12}(t\geqslant0)\comma \\
1+t^d\leqslant(1+t)^d (t\geqslant0\comma d\geqslant 1)\comma \\
1+dt\leqslant(1+t)^d (t\geqslant0\comma d\geqslant 1).
\end{ga}
By \eqref{eq: eta(b alpha)} and the above inequalities, we have
\begin{eq} \begin{aligned}
\eta(b\alpha)&\leqslant b^{p-1}\alpha^{p-1}\log^q\left(1+\alpha^{p+\frac q2+\frac r4}\right)\log^r\biggl(1+\log\Bigl(1+\alpha^{p+\frac q2+\frac r4}\Bigr)\biggr) \\
&\leqslant b^{p-1}\alpha^{p-1}\left(p+\frac q2+\frac r4\right)^q\log^q(1+\alpha)\log^r\biggl(1+\left(p+\frac q2+\frac r4\right)\log(1+\alpha)\biggr) \\
&\leqslant C^pb^{p-1}\alpha^{p-1}\log^q(1+\alpha)\log^r\bigl(1+\log(1+\alpha)\bigr)=C^pb^p.
\end{aligned} \end{eq}
Thus, our claim \eqref{eq: claim} is true and we have proved that
\begin{eq} \label{eq: an inequality of Young type}
ab\leqslant\Phi_{p\comma q\comma r}(a)+\Psi^{-1}(b)
\end{eq}
for any $a\comma b\geqslant0$. Now for any $\bm{x}\in\Omega$, we see that
\begin{eq}
\frac{\bigl|f(\bm{x})\bigr|}{\|f\|_{\mathrm{L}^p(\log\mathrm{L})^q(\log\log\mathrm{L})^r(\Omega\comma\mu)}}\Psi\left(\frac{1}{\mu(\Omega)}\right)\leqslant\Phi_{p\comma q\comma r}\left(\frac{f(\bm{x})}{\|f\|_{\mathrm{L}^p(\log\mathrm{L})^q(\log\log\mathrm{L})^r(\Omega\comma\mu)}}\right)+\frac{1}{\mu(\Omega)}.
\end{eq}
Integrating the above inequality over $\Omega$ and recalling \eqref{eq: to be used in the proof of the inequality of Holder-Young type}, we obtain
\begin{eq}
\int_\Omega |f|\dif\mu\leqslant 2\|f\|_{\mathrm{L}^p(\log\mathrm{L})^q(\log\log\mathrm{L})^r(\Omega\comma\mu)}\frac{1}{\Psi\left(\frac{1}{\mu(\Omega)}\right)}\comma
\end{eq}
which is exactly \eqref{eq: an inequality of Holder-Young type}.
\end{proof}

\thmref{thm: an inequality of Holder-Young type} will be used in the proofs of \thmref{thm: L^infty estimates on Kahler manifolds}, \thmref{thm: L^infty estimates on Hermitian manifolds} and \thmref{thm: a classical L^infty estimate of W^1,2 weak solutions}. We note that \thmref{thm: an inequality of Holder-Young type} is also useful when dealing with other problems. For example, the special case when $p=1$ was used in \cite{Guedj2023a}.


\section{An iteration lemma of De Giorgi type} \label{sec: An iteration lemma of De Giorgi type}

In this section, we prove an iteration lemma of De Giorgi type. An example is constructed to show that the iteration lemma given here is sharp to some extent. We also apply this iteration lemma to improve a classical $\mathrm{L}^\infty$ estimate of $\mathrm{W}^{1\comma2}$ weak solutions to linear elliptic equations in divergence form.

Now we prove an iteration lemma which generalizes the classical one proved by De Giorgi.

\begin{thm} \label{thm: an iteration lemma of De Giorgi type}
Let $f$ be a non-negative, non-increasing function defined on $[t_0\comma{+}\infty)$ such that for any $s\comma t\in\mathbb{R}\colon s>t\geqslant t_0$, there holds
\begin{eq} \label{eq: assumption of f}
f(s)\leqslant\frac{C}{(s-t)^\alpha}f(t)\log^{-\beta}\left(1+\frac{1}{f(t)}\right)\comma
\end{eq}
where $C$, $\alpha$ and $\beta$ are positive constants and $\beta>\alpha$. Then for any $\gamma\in\left(1\comma\frac{\beta}{\alpha}\right]$, we have $f(t_0+T_\gamma)=0$, where
\begin{eq} \label{eq: T_gamma} \begin{aligned}
T_\gamma&\triangleq
\begin{dcases}
\frac{(C\mathrm{e})^{\frac{1}{\alpha}}\left(\frac{2}{\log2}\right)^\gamma}{(\gamma-1)\log^{\frac{\beta}{\alpha}-\gamma}\left(1+\frac{1}{f(t_0)}\right)}\comma & 0\leqslant f(t_0)\leqslant 1 \\
\frac{(C\mathrm{e})^{\frac{1}{\alpha}}2^\gamma}{(\gamma-1)\log^{\frac{\beta}{\alpha}}\left(1+\frac{1}{f(t_0)}\right)}\comma & f(t_0)>1
\end{dcases} \\
&=\frac{(C\mathrm{e})^{\frac{1}{\alpha}}\left(\frac{2}{\log2}\right)^\gamma}{\gamma-1} \max\left\{\frac{1}{\log^{\frac{\beta}{\alpha}-\gamma}\left(1+\frac{1}{f(t_0)}\right)}\comma\frac{\log^\gamma 2}{\log^{\frac{\beta}{\alpha}}\left(1+\frac{1}{f(t_0)}\right)}\right\}.
\end{aligned} \end{eq}
\end{thm}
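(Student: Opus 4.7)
I would use a De Giorgi-style iteration. Introduce $t_k \triangleq t_0 + \sum_{j=0}^{k-1}\sigma_j$ for step sizes $\sigma_j > 0$ to be specified, set $a_k \triangleq f(t_k)$, and apply the hypothesis \eqref{eq: assumption of f} with $(s,t) = (t_{k+1}, t_k)$ to obtain the recursion $a_{k+1} \leqslant C\sigma_k^{-\alpha}a_k\log^{-\beta}(1+1/a_k)$. Choosing $\sigma_k \triangleq (C\mathrm{e})^{1/\alpha}\log^{-\beta/\alpha}(1+1/a_k)$ forces $a_{k+1} \leqslant a_k/\mathrm{e}$, so an induction gives $a_k \leqslant a_0\mathrm{e}^{-k} \to 0$; by monotonicity of $f$, it then suffices to bound $T \triangleq \sum_j \sigma_j$ by $T_\gamma$, whereupon $f(t_0+T_\gamma) = 0$.

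Consider first the regime $f(t_0) \leqslant 1$, so $L_0 \triangleq \log(1+1/f(t_0)) \geqslant \log 2$. The elementary computation $(a_0+\mathrm{e}^k)/(a_0+1) \geqslant \mathrm{e}^k/2$ (valid for $a_0 \leqslant 1$, $k\geqslant 0$) yields $\log(1+1/a_k) \geqslant L_0 + k - \log 2$, supplemented by the trivial bound $L_0$ at $k=0$. This controls each $\sigma_k$ from above. To introduce the parameter $\gamma \in (1, \beta/\alpha]$, I would invoke the elementary inequality $(L_0+s)^{\beta/\alpha} \geqslant L_0^{\beta/\alpha - \gamma}(L_0+s)^\gamma$ (valid for $s\geqslant 0$ since $\beta/\alpha - \gamma \geqslant 0$) and estimate $\sum_{k\geqslant 0}(L_0+k)^{-\gamma}$ by integral comparison. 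Consolidating the resulting constants using $L_0 \geqslant \log 2$ assembles the factor $(2/\log 2)^\gamma$ and yields the first branch of $T_\gamma$, namely $(C\mathrm{e})^{1/\alpha}(2/\log 2)^\gamma/((\gamma-1)\log^{\beta/\alpha-\gamma}(1+1/f(t_0)))$.

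The regime $f(t_0) > 1$ (where $L_0 < \log 2$) requires a different estimate: the elementary $\log(1+x) \geqslant x\log 2$ for $x\in[0,1]$, together with $L_0 \leqslant 1/a_0$, yields $\log(1+1/a_k) \geqslant \mathrm{e}^k L_0 \log 2$ while $\mathrm{e}^k \leqslant a_0$. This makes $\sigma_k$ decay geometrically in the initial phase, which contributes at most a constant times $L_0^{-\beta/\alpha}$ to $T$; once $a_k$ drops below $1$ at $k \approx \log a_0$, the previous analysis contributes only a bounded amount that gets absorbed. This matches the second branch of $T_\gamma$ involving $\log^\gamma 2/\log^{\beta/\alpha}(1+1/f(t_0))$, and the unified max-form of $T_\gamma$ follows. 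The hardest part is not the convergence itself but the precise arithmetic of constants: threading the decay factor $\mathrm{e}$, the $\log 2$ increment, and the $\gamma$-interpolation so that the final bound matches $(C\mathrm{e})^{1/\alpha}(2/\log 2)^\gamma/(\gamma-1)$ exactly; the elementary inequality $2\gamma - 1 \leqslant 2^\gamma$ (valid for $\gamma \geqslant 1$) is what ultimately bridges the natural constants from the iteration with the stated ones.
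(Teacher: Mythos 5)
Your strategy is a legitimate dual of the paper's: the paper prescribes the step sizes $\bigl(m^{1-\gamma}-(m+1)^{1-\gamma}\bigr)T_\gamma$ in advance, so that they telescope to exactly $T_\gamma$, and verifies by induction that each step gains a factor $\mathrm{e}^{-1}$; you instead take the minimal step guaranteeing the factor $\mathrm{e}^{-1}$ and then sum the steps. The regime $f(t_0)\leqslant1$ goes through as you describe: from $a_k\leqslant a_0\mathrm{e}^{-k}$ one gets $\sigma_k\leqslant(C\mathrm{e})^{\frac1\alpha}\log^{-\frac\beta\alpha}\bigl(1+\mathrm{e}^k/a_0\bigr)$, the additive bound $\log(1+\mathrm{e}^k/a_0)\geqslant L_0+k-\log 2$ holds for $k\geqslant1$, and after the $\gamma$-interpolation an integral comparison of $\sum_{k\geqslant1}(L_0+k-\log2)^{-\gamma}$ (splitting off the $k=1$ term to avoid the divergence of $\int_0^\infty$ when $L_0=\log2$) bounds the relevant bracket by $(\log2)^{-\gamma}(2\gamma-1)/(\gamma-1)$, so $2\gamma-1\leqslant2^\gamma$ is indeed exactly the inequality that closes the constant. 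This half is correct and genuinely different in detail from the paper's treatment.

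The gap is in the regime $f(t_0)>1$. Your accounting — a geometric initial phase of size $O\bigl(L_0^{-\beta/\alpha}\bigr)$ while $\mathrm{e}^k\leqslant a_0$, followed by the first-regime bound applied to the tail once $a_{k_0}\leqslant1$ — cannot reproduce the stated $T_\gamma$. Take $\gamma=\frac\beta\alpha$ (allowed by the theorem) and $a_0\downarrow1$, so $L_0\uparrow\log2$ and $T_\gamma\to(C\mathrm{e})^{\frac1\alpha}\left(\frac{2}{\log2}\right)^\gamma/(\gamma-1)$. The first-regime bound for the tail starting at $a_1$ is $(C\mathrm{e})^{\frac1\alpha}\left(\frac{2}{\log2}\right)^\gamma\log^{\gamma-\frac\beta\alpha}(1+1/a_1)/(\gamma-1)$, which for $\gamma=\frac\beta\alpha$ equals the \emph{entire} budget $T_\gamma$ irrespective of $a_1$; meanwhile $\sigma_0=(C\mathrm{e})^{\frac1\alpha}L_0^{-\frac\beta\alpha}=\frac{\gamma-1}{2^\gamma}T_\gamma$ is already a fixed positive fraction of the budget, so the two pieces together overshoot $T_\gamma$ by the factor $1+(\gamma-1)2^{-\gamma}$. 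The method itself is not at fault — the full sum $\sum_{k\geqslant0}\log^{-\beta/\alpha}(1+\mathrm{e}^k/a_0)$ really is at most $\frac{2^\gamma}{\gamma-1}L_0^{-\beta/\alpha}$, as the paper's proof implicitly shows by dominating each greedy step by the prescribed step $(\gamma-1)(m+1)^{-\gamma}T_\gamma$ — but the loss occurs in the phase-splitting. To repair it you need a single joint estimate valid for all $k$, e.g. $\log(1+\mathrm{e}^k x)\geqslant\frac{\log(1+\mathrm{e}^k)}{\log 2}\log(1+x)$ for $x\in(0\comma1]$; this is the role played in the paper by the uniform bound $\frac{1}{m+1}\log\bigl(1+\mathrm{e}^{m-1}/f(t_0)\bigr)\geqslant\frac12\log\bigl(1+1/f(t_0)\bigr)$, proved for all $m$ at once by a monotonicity argument, which treats every step on the same footing instead of separating an initial phase from a tail.
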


\begin{proof}
We may as well assume that $f(t_0)>0$. We claim that for any $n\in\mathbb{N}$,
\begin{eq} \label{eq: induction claim}
f\bigl(t_0+(1-n^{1-\gamma})T_{\gamma}\bigr)\leqslant f(t_0)\mathrm{e}^{1-n}.
\end{eq}
When $n=1$, \eqref{eq: induction claim} is obviously true. Next we assume that \eqref{eq: induction claim} holds for $n=m\in\mathbb{N}$ and manage to prove that it also holds for $n=m+1$. In fact, by the assumption \eqref{eq: assumption of f} we have
\begin{eq} \label{eq: induction} \begin{aligned}
&\mathrel{\phantom{=}}f\Bigl(t_0+\bigl(1-(m+1)^{1-\gamma}\bigr)T_{\gamma}\Bigr) \\
&\leqslant\frac{Cf\bigl(t_0+(1-m^{1-\gamma})T_\gamma\bigr)}{\Bigl(\bigl(m^{1-\gamma}-(m+1)^{1-\gamma}\bigr)T_{\gamma}\Bigr)^\alpha} \log^{-\beta}\left(1+\frac{1}{f\bigl(t_0+(1-m^{1-\gamma})T_\gamma\bigr)}\right) \\
&\leqslant\frac{Cf(t_0)\mathrm{e}^{1-m}}{\bigl((\gamma-1)(m+1)^{-\gamma}T_{\gamma}\bigr)^\alpha\log^{\beta}\left(1+\frac{\mathrm{e}^{m-1}}{f(t_0)}\right)} \\
&=\frac{Cf(t_0)\mathrm{e}^{1-m}}{\bigl((\gamma-1)T_{\gamma}\bigr)^\alpha\log^{\beta-\gamma\alpha}\left(1+\frac{\mathrm{e}^{m-1}}{f(t_0)}\right)}\Biggl(\frac{1}{m+1}\log\left(1+\frac{\mathrm{e}^{m-1}}{f(t_0)}\right)\Biggr)^{-\gamma\alpha}\comma
\end{aligned} \end{eq}
where the second ``$\leqslant$'' is due to the hypothesis of induction and the following elementary inequality
\begin{eq}
a^{1-\mu}-b^{1-\mu}\geqslant(\mu-1)(b-a)b^{-\mu} (b\geqslant a>0\comma\mu\geqslant 1).
\end{eq}
Define
\begin{eq}
g(t)\triangleq\frac{1}{t+1}\log\left(1+\frac{\mathrm{e}^{t-1}}{f(t_0)}\right) (t\geqslant 1).
\end{eq}
Note that
\begin{ga}
g'(t)=-\frac{1}{(t+1)^2}\log\left(1+\frac{\mathrm{e}^{t-1}}{f(t_0)}\right)+\frac{1}{t+1}\frac{\mathrm{e}^{t-1}}{f(t_0)+\mathrm{e}^{t-1}}\comma \\
\bigl((t+1)^2g'(t)\bigr)'=(t+1)\frac{f(t_0)\mathrm{e}^{t-1}}{\bigl(f(t_0)+\mathrm{e}^{t-1}\bigr)^2}>0\comma \\
4g'(1)=-\log\left(1+\frac{1}{f(t_0)}\right)+\frac{2}{f(t_0)+1}\geqslant\frac{f(t_0)-1}{f(t_0)\bigl(f(t_0)+1\bigr)}.
\end{ga}
When $f(t_0)>1$, we have
\begin{eq}
(t+1)^2g'(t)\geqslant 4g'(1)>0 (t\geqslant 1)
\end{eq}
and therefore
\begin{eq}
g(m)\geqslant g(1)=\frac12\log\left(1+\frac{1}{f(t_0)}\right).
\end{eq}
Recalling \eqref{eq: induction} and the definition \eqref{eq: T_gamma} of $T_\gamma$, we obtain
\begin{eq}
f\Bigl(t_0+\bigl(1-(m+1)^{1-\gamma}\bigr)T_{\gamma}\Bigr)\leqslant\frac{Cf(t_0)\mathrm{e}^{1-m}2^{\gamma\alpha}}{\bigl((\gamma-1)T_\gamma\bigr)^\alpha\log^\beta\left(1+\frac{1}{f(t_0)}\right)}=f(t_0)\mathrm{e}^{-m}.
\end{eq}
On the other hand, when $0<f(t_0)\leqslant 1$, we have
\begin{eq}
f(m)\geqslant\frac{1}{m+1}\log(1+\mathrm{e}^{m-1})\geqslant\frac{\log2}{2},
\end{eq}
which indicates that
\begin{eq}
f\Bigl(t_0+\bigl(1-(m+1)^{1-\gamma}\bigr)T_{\gamma}\Bigr)\leqslant\frac{Cf(t_0)\mathrm{e}^{1-m}\left(\frac{2}{\log2}\right)^{\gamma\alpha}}{\bigl((\gamma-1)T_\gamma\bigr)^\alpha\log^{\beta-\gamma\alpha}\left(1+\frac{1}{f(t_0)}\right)}=f(t_0)\mathrm{e}^{-m}.
\end{eq}
In conclusion, our claim \eqref{eq: induction claim} holds for $n=m+1$ and therefore holds for any $n\in\mathbb{N}$ by induction. Since $f$ is non-negative and non-increasing, we have
\begin{eq}
0\leqslant f(t_0+T_\gamma)\leqslant f\bigl(t_0+(1-n^{1-\gamma})T_{\gamma}\bigr)\leqslant f(t_0)\mathrm{e}^{1-n} (\forall n\in\mathbb{N})\comma
\end{eq}
which indicates that $f(t_0+T_\gamma)=0$.
\end{proof}

The condition $\beta>\alpha$ in \thmref{thm: an iteration lemma of De Giorgi type} is necessary for the existence of a positive constant $T$ such that $f(t_0+T)=0$. See the following example.

\begin{ex}
Define
\begin{eq}
f(t)\triangleq\mathrm{e}^{-\mathrm{e}^t} (t\geqslant 0).
\end{eq}
Fix $\alpha>0$. Then for any $s\comma t\in\mathbb{R}\colon s>t\geqslant0$,
\begin{eq} \begin{aligned}
\frac{f(s)(s-t)^\alpha\log^\alpha\left(1+\frac{1}{f(t)}\right)}{f(t)}&=\mathrm{e}^{-(\mathrm{e}^s-\mathrm{e}^t)}(s-t)^\alpha\log^\alpha\left(1+\mathrm{e}^{\mathrm{e}^t}\right) \\
&\leqslant\mathrm{e}^{-\mathrm{e}^t(s-t)}(s-t)^\alpha\log^\alpha\left(\mathrm{e}^{2\mathrm{e}^t}\right) \\
&=2^\alpha\bigl((s-t)\mathrm{e}^t\bigr)^\alpha\mathrm{e}^{-(s-t)\mathrm{e}^t} \\
&\leqslant 2^\alpha\alpha^\alpha\mathrm{e}^{-\alpha}=\left(\frac{2\alpha}{\mathrm{e}}\right)^\alpha\comma
\end{aligned} \end{eq}
where the second ``$\leqslant$'' is due to the fact that the maximum of the function $x^\alpha\mathrm{e}^{-x}(x\geqslant 0)$ is attained at $x=\alpha$.
\end{ex}

A straightforward corollary of \thmref{thm: an iteration lemma of De Giorgi type} is as follows, which can be viewed as another version of the iteration lemma of De Giorgi type.

\begin{cor} \label{cor: another version of the iteration lemma of De Giorgi type}
Let $f$ be a non-negative, non-increasing function defined on $[t_0\comma{+}\infty)$ such that for any $s\comma t\in\mathbb{R}\colon s>t\geqslant t_0$, there holds
\begin{eq}
f(s)\leqslant\frac{C}{(s-t)^\alpha}f(t)\log^{-\beta}\left(1+\frac{1}{f(t)}\right)\comma
\end{eq}
where $C$, $\alpha$ and $\beta$ are positive constants and $\beta>\alpha$. Suppose that $f(t_0+T)>0$ for some positive constant $T$. Then for any $\gamma\in\left(1\comma\frac{\beta}{\alpha}\right)$, we have
\begin{eq}
f(t_0)>\frac{1}{\mathrm{e}^{L_\gamma}-1}\comma
\end{eq}
where
\begin{eq} \label{eq: L_gamma}
L_\gamma\triangleq\max\left\{\left(\frac{(C\mathrm{e})^{\frac{1}{\alpha}}\left(\frac{2}{\log2}\right)^\gamma}{(\gamma-1)T}\right)^{\frac{\alpha}{\beta-\gamma\alpha}}\comma \left(\frac{(C\mathrm{e})^{\frac{1}{\alpha}}2^\gamma}{(\gamma-1)T}\right)^{\frac{\alpha}{\beta}}\right\}.
\end{eq}
\end{cor}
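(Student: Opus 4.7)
The plan is to derive the corollary as a clean contrapositive of \thmref{thm: an iteration lemma of De Giorgi type}. First I would observe that the hypotheses on $f$, $C$, $\alpha$, $\beta$ carry over verbatim, so for any fixed $\gamma\in\left(1\comma\frac{\beta}{\alpha}\right]$ the theorem gives $f(t_0+T_\gamma)=0$ with $T_\gamma$ as in \eqref{eq: T_gamma}. Combined with the hypothesis $f(t_0+T)>0$ and the monotonicity of $f$, this forces $T<T_\gamma$; in particular $f(t_0)>0$.

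Next I would translate $T<T_\gamma$ into a lower bound on $f(t_0)$ by inverting the ``max'' form of $T_\gamma$ stated in the second equality of \eqref{eq: T_gamma}. Writing $u\triangleq\log\bigl(1+\frac{1}{f(t_0)}\bigr)$ for brevity, the inequality $T<T_\gamma$ becomes
\begin{eq*}
\frac{(\gamma-1)T}{(C\mathrm{e})^{\frac{1}{\alpha}}\left(\frac{2}{\log 2}\right)^\gamma}<\max\left\{\frac{1}{u^{\frac{\beta}{\alpha}-\gamma}}\comma\frac{\log^\gamma 2}{u^{\frac{\beta}{\alpha}}}\right\}.
\end{eq*}
I would then split according to which term realises the maximum on the right, that is, according to whether $u\geqslant\log 2$ or $u\leqslant\log 2$. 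The strict inequality $\gamma<\frac{\beta}{\alpha}$ imposed in the corollary (as opposed to the $\leqslant$ allowed in the theorem) makes the exponent $\frac{\beta}{\alpha}-\gamma$ positive, so the first case can be inverted to recover the first expression inside the max in \eqref{eq: L_gamma}; the second case, after absorbing the factor $\log^\gamma 2$ into the numerator so as to produce $2^\gamma$, yields the second expression inside that max.

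Since each of these two upper bounds on $u$ is by definition at most $L_\gamma$, in either scenario one concludes $u<L_\gamma$. Exponentiating gives $1+\frac{1}{f(t_0)}<\mathrm{e}^{L_\gamma}$, which rearranges to the desired estimate $f(t_0)>\frac{1}{\mathrm{e}^{L_\gamma}-1}$. The argument is purely algebraic once \thmref{thm: an iteration lemma of De Giorgi type} has been invoked, so I do not anticipate any real obstacle; the only subtlety worth flagging is the case split above, in which the strict bound $\gamma<\frac{\beta}{\alpha}$ is precisely what permits the inversion in the regime where the first term of the maximum dominates.
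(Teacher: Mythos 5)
Your proposal is correct and follows exactly the route of the paper, whose entire proof is the one-line observation that $T_\gamma>T$; you simply carry out explicitly the algebraic inversion of the formula \eqref{eq: T_gamma} that the paper leaves implicit, including the correct case split (which matches the dichotomy $f(t_0)\leqslant 1$ versus $f(t_0)>1$ in the theorem) and the correct use of the strict inequality $\gamma<\frac{\beta}{\alpha}$ to invert the first branch.
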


\begin{proof}
It suffices to note that the constant $T_\gamma$ defined in \thmref{thm: an iteration lemma of De Giorgi type} satisfies $T_\gamma>T$.
\end{proof}

\thmref{thm: an iteration lemma of De Giorgi type} and \corref{cor: another version of the iteration lemma of De Giorgi type} will be used in \secref{sec: L^infty estimates on Kahler manifolds} and \secref{sec: L^infty estimates on Hermitian manifolds} respectively. We note that the iteration lemma of De Giorgi type \thmref{thm: an iteration lemma of De Giorgi type} is of independent interest since one can use it, together with the inequality of H\"older-Young type \thmref{thm: an inequality of Holder-Young type} maybe, to improve many other known conclusions whose proofs rely on De Giorgi iteration. For example, we apply this iteration lemma to the following improvement of a classical $\mathrm{L}^\infty$ estimate of $\mathrm{W}^{1\comma2}$ weak solutions to linear elliptic equations in divergence form. See e.g. \cite{Gilbarg2001} for the classical estimate.

\begin{thm} \label{thm: a classical L^infty estimate of W^1,2 weak solutions}
Fix $n\in\mathbb{N}\colon n\geqslant 3$ and $q>\frac n2$. Let $\Omega$ be a bounded domain in $\mathbb{R}^n$ with $\partial\Omega\in\mathrm{C}^1$, and $u\in\mathrm{W}^{1\comma2}(\Omega)$ be a $\mathrm{W}^{1\comma 2}$ weak solution to the equation
\begin{eq}
-\sum_{i\comma j=1}^n\mathrm{D}_j\bigl(a_{ij}(\bm{x})\mathrm{D}_iu\bigr)+c(\bm{x})u=\varphi(\bm{x})
\end{eq}
in $\Omega$, where $a_{ij}\in\mathrm{L}^\infty(\Omega)$, $c\in\mathrm{L}^{\frac n2}(\Omega)$ and $\varphi\in\mathrm{L}^{\frac n2}(\log\mathrm{L})^q(\Omega)$ are given functions such that $c\geqslant0$ and $(a_{ij})\geqslant\varepsilon\mathbf{I}_n$ for a positive constant $\varepsilon$. Then we have
\begin{eq} \label{eq: a classical L^infty estimate of W^1,2 weak solutions}
\|u\|_{\mathrm{L}^\infty(\Omega)}\leqslant\inf\Bigl\{s\geqslant0\Bigm|\bigl(|u|-s\bigr)_+\in\mathrm{W}_0^{1\comma2}(\Omega)\Bigr\}+C\|\varphi\|_{\mathrm{L}^{\frac n2}(\log\mathrm{L})^q(\Omega)}\comma
\end{eq}
where $\inf\varnothing\triangleq{+}\infty$ and $C$ is a positive constant depending only on $\Omega$, $n$, $q$ and $\frac{1}{\varepsilon}$.
\end{thm}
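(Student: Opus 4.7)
The plan is to adapt the classical De Giorgi iteration for $|u|$ by establishing a recursion for the distribution function
\[
\mu(s) \triangleq \bigl|\bigl\{\bm{x} \in \Omega : |u(\bm{x})| > s\bigr\}\bigr|\comma \quad s \geqslant t_0\comma
\]
where $t_0 \triangleq \inf\bigl\{s \geqslant 0 \bigm| (|u|-s)_+ \in \mathrm{W}_0^{1\comma 2}(\Omega)\bigr\}$ (we may assume this to be finite, otherwise the asserted bound is vacuous). The conclusion will then follow from the iteration lemma \thmref{thm: an iteration lemma of De Giorgi type} together with the H\"older-Young inequality \thmref{thm: an inequality of Holder-Young type}.

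For any $s \geqslant t_0$, I test the weak formulation against $v \triangleq (u-s)_+ - (u+s)_- \in \mathrm{W}_0^{1\comma 2}(\Omega)$. Since $c \geqslant 0$ and $uv = |u|(|u|-s)_+ \geqslant 0$, the zeroth-order term is non-negative and may be discarded, whence uniform ellipticity yields
\[
\varepsilon \int_{A(s)} |\nabla u|^2 \dif\bm{x} \leqslant \int_{A(s)} |\varphi|(|u|-s) \dif\bm{x}\comma \quad A(s) \triangleq \bigl\{|u|>s\bigr\}.
\]
Setting $w \triangleq (|u|-s)_+ \in \mathrm{W}_0^{1\comma 2}(\Omega)$, Sobolev's inequality combined with H\"older (once in the duality $\mathrm{L}^{2^*}\leftrightarrow\mathrm{L}^{2n/(n+2)}$ with $2^* = 2n/(n-2)$, and once more to pass from $\mathrm{L}^{2n/(n+2)}(A(s))$ to $\mathrm{L}^{n/2}(A(s))$ with the loss factor $\mu(s)^{(n-2)/(2n)}$) gives
\[
\|w\|_{\mathrm{L}^{2^*}(\Omega)} \leqslant \frac{C_1}{\varepsilon}\|\varphi\|_{\mathrm{L}^{n/2}(A(s))}\,\mu(s)^{(n-2)/(2n)}.
\]

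The elementary pointwise inequality $\Phi_{1\comma q\comma 0}(t^{n/2}) \leqslant C_{n\comma q} \Phi_{n/2\comma q\comma 0}(t)$ for $t \geqslant 0$ (easily verified by splitting into $t \in [0\comma 1]$ and $t \geqslant 1$), combined with the convexity of $\Phi_{n/2\comma q\comma 0}$, yields $\bigl\||\varphi|^{n/2}\bigr\|_{\mathrm{L}^1(\log\mathrm{L})^q(\Omega)} \leqslant C_2 \|\varphi\|_{\mathrm{L}^{n/2}(\log\mathrm{L})^q(\Omega)}^{n/2}$. Applying \thmref{thm: an inequality of Holder-Young type} with $p=1$, Orlicz exponent $q$, and $r=0$ to $|\varphi|^{n/2}$ on $A(s)$ then produces
\[
\int_{A(s)} |\varphi|^{n/2}\dif\bm{x} \leqslant \frac{C_3 \|\varphi\|_{\mathrm{L}^{n/2}(\log\mathrm{L})^q(\Omega)}^{n/2}}{\log^q\bigl(1+\tfrac{1}{\mu(s)}\bigr)}.
\]
Taking the $2/n$-power, substituting into the previous display, and using the trivial bound $(t-s)\mu(t)^{(n-2)/(2n)} \leqslant \|w\|_{\mathrm{L}^{2^*}(\Omega)}$ (valid for $t>s$ since $w \geqslant t-s$ on $A(t) \subset A(s)$), then raising to the $2n/(n-2)$ power, we arrive at
\[
\mu(t) \leqslant \frac{C_4 \|\varphi\|_{\mathrm{L}^{n/2}(\log\mathrm{L})^q(\Omega)}^{2n/(n-2)}}{(t-s)^{2n/(n-2)}\log^{4q/(n-2)}\bigl(1+\tfrac{1}{\mu(s)}\bigr)}\,\mu(s)
\]
for all $s\comma t \in [t_0\comma +\infty)$ with $t > s$.

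Finally, I apply \thmref{thm: an iteration lemma of De Giorgi type} to $\mu$ on $[t_0\comma +\infty)$ with $\alpha = 2n/(n-2)$, $\beta = 4q/(n-2)$ and constant $C_4 \|\varphi\|_{\mathrm{L}^{n/2}(\log\mathrm{L})^q(\Omega)}^{2n/(n-2)}$; the assumption $q > n/2$ is precisely $\beta > \alpha$. The theorem gives $\mu(t_0+T) = 0$ for some $T > 0$. The key observation at this step is that in \eqref{eq: T_gamma} the factor $(C\mathrm{e})^{1/\alpha}$ scales \emph{linearly} in $\|\varphi\|_{\mathrm{L}^{n/2}(\log\mathrm{L})^q(\Omega)}$, while the logarithmic correction involving $\mu(t_0)\leqslant|\Omega|$ contributes only a factor depending on $|\Omega|$, $n$ and $q$. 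This yields $\|u\|_{\mathrm{L}^\infty(\Omega)} \leqslant t_0 + C\|\varphi\|_{\mathrm{L}^{n/2}(\log\mathrm{L})^q(\Omega)}$, i.e.\ \eqref{eq: a classical L^infty estimate of W^1,2 weak solutions}. The main obstacle is bookkeeping in this last step: the constants must be homogenized carefully so that, after extracting an $\alpha$-th root, the dependence on $\|\varphi\|_{\mathrm{L}^{n/2}(\log\mathrm{L})^q(\Omega)}$ remains linear and no spurious power is picked up.
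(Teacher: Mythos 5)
Your proposal is correct and takes essentially the same route as the paper's proof: truncation test functions, Sobolev/H\"older combined with the H\"older--Young inequality (\thmref{thm: an inequality of Holder-Young type}) to derive the level-set recursion $\mu(t)\leqslant C(t-s)^{-2n/(n-2)}\mu(s)\log^{-4q/(n-2)}(1+1/\mu(s))$, and then the iteration lemma (\thmref{thm: an iteration lemma of De Giorgi type}) with $\gamma=2q/n$, noting the linear dependence of $T_\gamma$ on $\|\varphi\|_{\mathrm{L}^{n/2}(\log\mathrm{L})^q(\Omega)}$. The only cosmetic differences are that you bound $|u|$ directly via the two-sided test function $(u-s)_+-(u+s)_-$ where the paper bounds $\esssup u$ and relies on symmetry, and that you apply the H\"older--Young inequality with $p=1$ to $|\varphi|^{n/2}$ after an extra H\"older step rather than with $p=\frac{n+2}{4}$ to $|\varphi|^{2n/(n+2)}$; both yield the same recursion.
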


\begin{proof}
Let $B$ denote $\inf\Bigl\{s\geqslant0\Bigm|\bigl(|u|-s\bigr)_+\in\mathrm{W}_0^{1\comma2}(\Omega)\Bigr\}$. We may as well assume that $B\in[0\comma{+}\infty)$ and only need to prove that
\begin{eq} \label{eq: a classical maximum principle of W^1,2 weak solutions}
\esssup_\Omega u\leqslant B+C\|\varphi\|_{\mathrm{L}^{\frac n2}(\log\mathrm{L})^q(\Omega)}.
\end{eq}
Fix $t\geqslant B$ and define
\begin{eq}
v_t\triangleq(u-t)_+.
\end{eq}
Note that $v_t\in\mathrm{W}_0^{1\comma2}(\Omega)$. Thus we have
\begin{eq} \label{eq: weak solution}
\int_\Omega \left(\sum_{i\comma j=1}^n a_{ij}(\bm{x})\mathrm{D}_iu\mathrm{D}_jv_t+c(\bm{x})uv_t\right)\dif\bm{x}=\int_\Omega \varphi(\bm{x})v_t(\bm{x})\dif\bm{x}.
\end{eq}
Let $\Omega_t$ denote the set
\begin{eq}
\bigl\{\bm{x}\in\Omega\bigm|u(\bm{x})>t\bigr\}
\end{eq}
and $|\Omega_t|$ denote the Lebesgue measure of $\Omega_t$. The left-hand side of \eqref{eq: weak solution} reads
\begin{eq} \label{eq: weak solution, left-hand}
\int_{\Omega_t} \left(\sum_{i\comma j=1}^n a_{ij}(\bm{x})\mathrm{D}_iu\mathrm{D}_ju+c(\bm{x})u(u-t)\right)\dif\bm{x}.
\end{eq}
By the assumptions about $a_{ij}$, $c$ and the Poincar\'e inequality, we have
\begin{eq} \label{eq: weak solution, left-hand, further}
\eqref{eq: weak solution, left-hand} \geqslant \varepsilon\int_{\Omega_t} |\mathrm{D}u|^2\dif\bm{x}=\varepsilon\|\mathrm{D}v_t\|_{\mathrm{L}^2(\Omega)}^2\geqslant\frac{\varepsilon}{C_1}\|v_t\|_{\mathrm{L}^{\frac{2n}{n-2}}(\Omega)}^2.
\end{eq}
The right-hand side of \eqref{eq: weak solution} reads
\begin{eq} \label{eq: weak solution, right-hand}
\int_{\Omega_t} \varphi(\bm{x})v_t\dif\bm{x}.
\end{eq}
By the H\"older inequality and \thmref{thm: an inequality of Holder-Young type}, we have
\begin{eq} \label{eq: weak solution, right-hand, further} \begin{aligned}
\eqref{eq: weak solution, right-hand} &\leqslant\|v_t\|_{\mathrm{L}^{\frac{2n}{n-2}}(\Omega_t)}\left(\int_{\Omega_t} |\varphi|^{\frac{2n}{n+2}}\dif\bm{x}\right)^{\frac{n+2}{2n}} \\
&\leqslant\|v_t\|_{\mathrm{L}^{\frac{2n}{n-2}}(\Omega)}\left(\frac{C_2\left\||\varphi|^{\frac{2n}{n+2}}\right\|_{\mathrm{L}^{\frac{n+2}{4}}(\log\mathrm{L})^q(\Omega)}|\Omega_t|^{\frac{n-2}{n+2}}}{\log^{\frac{4q}{n+2}}\left(1+\frac{1}{|\Omega_t|}\right)}\right)^{\frac{n+2}{2n}} \\
&=C_2^{\frac{n+2}{2n}}\|v_t\|_{\mathrm{L}^{\frac{2n}{n-2}}(\Omega)}\frac{\left\||\varphi|^{\frac{2n}{n+2}}\right\|_{\mathrm{L}^{\frac{n+2}{4}}(\log\mathrm{L})^q(\Omega)}^{\frac{n+2}{2n}}|\Omega_t|^{\frac{n-2}{2n}}}{\log^{\frac{2q}{n}}\left(1+\frac{1}{|\Omega_t|}\right)}.
\end{aligned} \end{eq}
Combining \eqref{eq: weak solution}, \eqref{eq: weak solution, left-hand, further} and \eqref{eq: weak solution, right-hand, further}, we obtain
\begin{eq} \label{eq: v_t, norm estimate}
\|v_t\|_{\mathrm{L}^{\frac{2n}{n-2}}(\Omega)}^{\frac{2n}{n-2}}\leqslant\left(\frac{C_1C_2^{\frac{n+2}{2n}}}{\varepsilon}\right)^{\frac{2n}{n-2}}\frac{\left\||\varphi|^{\frac{2n}{n+2}}\right\|_{\mathrm{L}^{\frac{n+2}{4}}(\log\mathrm{L})^q(\Omega)}^{\frac{n+2}{n-2}}|\Omega_t|}{\log^{\frac{4q}{n-2}}\left(1+\frac{1}{|\Omega_t|}\right)}.
\end{eq}
To estimate $\left\||\varphi|^{\frac{2n}{n+2}}\right\|_{\mathrm{L}^{\frac{n+2}{4}}(\log\mathrm{L})^q(\Omega)}$, we note that
\begin{eq} \begin{aligned}
&\mathrel{\phantom{=}}\int_\Omega \left(\frac{|\varphi|^{\frac{2n}{n+2}}}{\|\varphi\|_{\mathrm{L}^{\frac n2}(\log\mathrm{L})^q(\Omega)}^{\frac{2n}{n+2}}}\right)^{\frac{n+2}{4}}\log^q\left(1+\frac{|\varphi|^{\frac{2n}{n+2}}}{\|\varphi\|_{\mathrm{L}^{\frac n2}(\log\mathrm{L})^q(\Omega)}^{\frac{2n}{n+2}}}\right)\dif\bm{x} \\
&\leqslant\left(\frac{2n}{n+2}\right)^q\int_\Omega \left(\frac{|\varphi|}{\|\varphi\|_{\mathrm{L}^{\frac n2}(\log\mathrm{L})^q}(\Omega)}\right)^{\frac{n}{2}}\log^q\left(1+\frac{|\varphi|}{\|\varphi\|_{\mathrm{L}^{\frac n2}(\log\mathrm{L})^q(\Omega)}}\right)\dif\bm{x}.
\end{aligned} \end{eq}
This inequality together with \eqref{eq: to be used in the proof of the inequality of Holder-Young type} and \propref{prop: estimates of norms} indicates that
\begin{eq} \label{eq: phi, norm estimate}
\left\||\varphi|^{\frac{2n}{n+2}}\right\|_{\mathrm{L}^{\frac{n+2}{4}}(\log\mathrm{L})^q(\Omega)}\leqslant\left(\frac{2n}{n+2}\right)^{\frac{4q}{n+2}}\|\varphi\|_{\mathrm{L}^{\frac n2}(\log\mathrm{L})^q(\Omega)}^{\frac{2n}{n+2}}.
\end{eq}
Thus, by \eqref{eq: v_t, norm estimate} and \eqref{eq: phi, norm estimate} we obtain
\begin{eq}
\|v_t\|_{\mathrm{L}^{\frac{2n}{n-2}}(\Omega)}^{\frac{2n}{n-2}}\leqslant C_3\|\varphi\|_{\mathrm{L}^{\frac n2}(\log\mathrm{L})^q(\Omega)}^{\frac{2n}{n-2}}\frac{|\Omega_t|}{\log^{\frac{4q}{n-2}}\left(1+\frac{1}{|\Omega_t|}\right)}.
\end{eq}
On the other hand, for any $s>t$, there holds
\begin{eq}
\|v_t\|_{\mathrm{L}^{\frac{2n}{n-2}}(\Omega)}^{\frac{2n}{n-2}}=\int_{\Omega_t} (u-t)^{\frac{2n}{n-2}}\dif\bm{x}\geqslant(s-t)^{\frac{2n}{n-2}}|\Omega_s|.
\end{eq}
As a result, we have
\begin{eq} \label{eq: Omega_s, estimate}
|\Omega_s|\leqslant \frac{C_3\|\varphi\|_{\mathrm{L}^{\frac n2}(\log\mathrm{L})^q(\Omega)}^{\frac{2n}{n-2}}}{(s-t)^{\frac{2n}{n-2}}}\frac{|\Omega_t|}{\log^{\frac{4q}{n-2}}\left(1+\frac{1}{|\Omega_t|}\right)}
\end{eq}
for any $s\comma t\in\mathbb{R}\colon s>t\geqslant B$. Since $q>\frac n2$ and $|\Omega_B|\leqslant|\Omega|$, \eqref{eq: Omega_s, estimate} implies 
\begin{eq}
\left|\Omega_{B+T_{\frac{2q}{n}}}\right|=0
\end{eq}
by \thmref{thm: an iteration lemma of De Giorgi type}, where
\begin{eq}
T_{\frac{2q}{n}}\triangleq\frac{(C_3\mathrm{e})^{\frac{n-2}{2n}}\|\varphi\|_{\mathrm{L}^{\frac n2}(\log\mathrm{L})^q(\Omega)}\left(\frac{2}{\log2}\right)^{\frac{2q}{n}}}{\frac{2q}{n}-1} \max\left\{1\comma\frac{\log^{\frac{2q}{n}} 2}{\log^{\frac{2q}{n}}\left(1+\frac{1}{|\Omega_B|}\right)}\right\}\leqslant C\|\varphi\|_{\mathrm{L}^{\frac n2}(\log\mathrm{L})^q(\Omega)}.
\end{eq}
It follows that \eqref{eq: a classical maximum principle of W^1,2 weak solutions} holds.
\end{proof}

\begin{rmk}
After completing this paper, we learned that \citeauthor{Cruz-Uribe2021} \cite{Cruz-Uribe2021} had proved a similar version of
\thmref{thm: a classical L^infty estimate of W^1,2 weak solutions}. Their proof was based on an iteration argument similar to but slightly different from \thmref{thm: an iteration lemma of De Giorgi type}. On the other hand, a more general version of
\thmref{thm: a classical L^infty estimate of W^1,2 weak solutions} was proved by \citeauthor{Cianchi1999} using totally different methods, see \cite[p. 200]{Cianchi1999}.
\end{rmk}


\section[Preliminaries to L∞ estimates of fully non-linear elliptic equations on complex manifolds]{Preliminaries to $\mathrm{L}^\infty$ estimates of fully non-linear elliptic equations on complex manifolds} \label{sec: Preliminaries to L^infty estimates of fully non-linear elliptic equations on complex manifolds}

In this section, we give some preliminaries to $\mathrm{L}^\infty$ estimates of fully non-linear elliptic equations on complex manifolds. Let $(\mathcal{M}\comma\bm{\omega}_{\bm g})$ be a compact Hermitian manifold of dimension $n$ with the Hermitian metric
\begin{eq}
\bm{g}\triangleq g_{j\overline{k}}\dif z^j\otimes\dif\overline{z}^k+\overline{g_{j\overline{k}}}\dif\overline{z}^j\otimes\dif z^k
\end{eq}
and the corresponding K\"ahler form (not necessarily closed)
\begin{eq}
\bm{\omega}_{\bm g}\triangleq\mathrm{i}g_{j\overline{k}}\dif z^j\wedge\dif\overline{z}^k.
\end{eq}
in the local holomorphic coordinate system $(U\mbox{; }z^j)$. The volume form of $(\mathcal{M}\comma\bm{\omega}_{\bm g})$ is
\begin{eq}
\bm{\omega}_{\bm g}^n=\mathrm{i}^nn!\det(g_{j\overline{k}})\dif z^1\wedge\dif\overline{z}^1\wedge\cdots\wedge\dif z^n\wedge\dif\overline{z}^n.
\end{eq}
Let $\left(g^{j\overline{k}}\right)$ denote
\begin{eq}
\left(\left(g_{j\overline{k}}\right)^{-1}\right)^{\mathrm T}.
\end{eq}
Then there hold
\begin{eq}
g^{j\overline l}g_{k\overline l}=g^{l\overline j}g_{l\overline k}=\delta_{jk}
\end{eq}
and
\begin{eq}
\frac{\partial g^{j\overline k}}{\partial z^l}=-g^{j\overline q}g^{p\overline k}\frac{\partial g_{p\overline q}}{\partial z^l}.
\end{eq}
Let
\begin{eq}
\bm\chi\triangleq\mathrm{i}\chi_{j\overline k}\dif z^j\wedge\dif\overline{z}^k.
\end{eq}
be a positive $(1\comma1)$-form on $\mathcal{M}$. For any $u\in\mathrm{C}^2(\mathcal M)$, let $\bm{\omega}_{\bm g}^{-1}(\bm\chi+\mathrm{i}\partial\overline\partial u)$ denote the matrix-valued function
\begin{eq}
\left(g^{j\overline k}\right)\left(\chi_{j\overline k}+\mathrm{D}_{j\overline k}u\right)^{\mathrm T}
\end{eq}
well-defined on $\mathcal M$, whose eigenvalues are all real. Fix $k\in\{1\comma2\comma\cdots\comma n\}$. One can prove that
\begin{eq}
(\bm\chi+\mathrm{i}\partial\overline\partial u)^k\wedge\bm{\omega}_{\bm g}^{n-k}=\frac{(n-k)!\,k!}{n!}\sigma_k\Bigl(\bm\lambda\left(\bm{\omega}_{\bm g}^{-1}(\bm\chi+\mathrm{i}\partial\overline\partial u)\right)\Bigr)\bm{\omega}_{\bm g}^n\comma
\end{eq}
where $\bm\lambda\bigl(\bm{\omega}_{\bm g}^{-1}(\bm{\chi}+\mathrm{i}\partial\overline\partial u)\bigr)$ represents the vector composed of the $n$ real eigenvalues of $\bm{\omega}_{\bm g}^{-1}(\bm{\chi}+\mathrm{i}\partial\overline\partial u)$ and $\sigma_k$ is the $k$-th elementary symmetric polynomial defined as
\begin{eq}
\sigma_k(\bm\lambda)\triangleq\sum_{1\leqslant i_1<i_2<\cdots<i_k\leqslant n}\lambda_{i_1}\lambda_{i_2}\cdots\lambda_{i_k}.
\end{eq}
The $k$-th G\r{a}rding cone $\Gamma_k$ is defined as
\begin{eq}
\bigl\{\bm\lambda\in\mathbb{R}^n\bigm|\sigma_l(\bm\lambda)>0\comma\forall l\in\{1\comma2\comma\cdots\comma k\}\bigr\}.
\end{eq}
Note that
\begin{al}
\Gamma_1&=\left\{\bm\lambda\in\mathbb{R}^n\middle|\lambda_1+\lambda_2+\cdots+\lambda_n>0\right\}\comma \\
\Gamma_n&=\bigl\{\bm\lambda\in\mathbb{R}^n\bigm|\lambda_i>0\comma\forall i\in\{1\comma2\comma\cdots\comma n\}\bigr\}\comma
\end{al}
and $\Gamma_k$ is a symmetric open convex cone in $\mathbb{R}^n$ satisfying $\Gamma_n\subset\Gamma\subset\Gamma_1$. $\Gamma_k$ is closely related to $\sigma_k$ especially when $k\geqslant 2$. In fact, the function $\sigma_k^{\frac 1k}$ is elliptic, in the sense of the condition \eqref{item: elliptic} on page \pageref{item: elliptic}, and concave in $\Gamma_k$. For any $l\in\{1\comma2\comma\cdots\comma k-1\}$, the function
\begin{eq}
\left(\frac{\sigma_k}{\sigma_l}\right)^{\frac{1}{k-l}}
\end{eq}
is also elliptic and concave in $\Gamma_k$. See e.g. \cite[pp.~401--407]{Lieberman1996} for the proofs of these statements. Moreover, $\sigma_k^{\frac 1k}$ satisfies the condition \eqref{item: structural} on page \pageref{item: structural}, see \cite{Guo2023}.

For the rest of this section, we always assume that $\Gamma$ is a symmetric open convex cone in $\mathbb{R}^n$ satisfying $\Gamma_n\subset\Gamma\subset\Gamma_1$ and $f$ is a positive $\mathrm{C}^1$ symmetric function defined in $\Gamma$ satisfying the conditions \eqref{item: elliptic}, \eqref{item: structural} and \eqref{eq: Euler inequality}. The $\Gamma$ is called the admissible cone with respect to the $f$. Consider the following fully non-linear elliptic equation
\begin{eq} \label{eq: equation}
f\Bigl(\bm\lambda\bigl(\bm{\omega}_{\bm g}^{-1}(\bm{\chi}+\mathrm{i}\partial\overline\partial u)\bigr)\Bigr)=\varphi
\end{eq}
on $(\mathcal{M}\comma\bm{\omega}_{\bm g})$, where $\varphi$ is a positive smooth function. When $f=\sigma_n^{\frac 1n} (\Gamma=\Gamma_n)$, \eqref{eq: equation} is equivalent to the Monge-Amp\`ere equation. When $f=\sigma_k^{\frac 1k} (\Gamma=\Gamma_k)$, \eqref{eq: equation} is equivalent to the $k$-Hessian equation
\begin{eq}
(\bm\chi+\mathrm{i}\partial\overline\partial u)^k\wedge\bm{\omega}_{\bm g}^{n-k}=\frac{(n-k)!\,k!}{n!}\varphi^k\bm{\omega}_{\bm g}^n.
\end{eq}
We call a $\mathrm{C}^2$ solution $u$ to \eqref{eq: equation} an admissible solution if it satisfies
\begin{eq} \label{eq: admissible}
\bm\lambda\bigl(\bm{\omega}_{\bm g}^{-1}(\bm{\chi}+\mathrm{i}\partial\overline\partial u)\bigr)\in\Gamma
\end{eq}
on $\mathcal M$. We are mainly interested in the admissible solutions of \eqref{eq: equation} because by the following proposition, \eqref{eq: equation} is an elliptic equation with respect to an admissible solution $u$ in the sense that the matrix
\begin{eq} \label{eq: (F^jk)}
\left(F_{\bm\chi+\mathrm{i}\partial\overline\partial u}^{j\overline k}\right)
\end{eq}
defined by
\begin{eq} \label{eq: F^jk} \begin{aligned}
F_{\bm\chi+\mathrm{i}\partial\overline\partial u}^{j\overline k}&\triangleq\left.\frac{\partial\log f\Bigl(\bm\lambda\bigl(\bm{\omega}_{\bm g}^{-1}\bm{\omega}\bigr)\Bigr)}{\partial\omega_{j\overline k}}\right|_{\bm\omega=\bm\chi+\mathrm{i}\partial\overline\partial u} \\
&=\frac{1}{f\Bigl(\bm\lambda\bigl(\bm{\omega}_{\bm g}^{-1}(\bm{\chi}+\mathrm{i}\partial\overline\partial u)\bigr)\Bigr)} \left.\frac{\partial f\Bigl(\bm\lambda\bigl(\bm{\omega}_{\bm g}^{-1}\bm{\omega}\bigr)\Bigr)}{\partial\omega_{j\overline k}}\right|_{\bm\omega=\bm\chi+\mathrm{i}\partial\overline\partial u}
\end{aligned} \end{eq}
is positive definite.

\begin{prop} \label{prop: properties of F^jk}
Let $u\in\mathrm{C}^2(\mathcal M)$ be an admissible solution to \eqref{eq: equation}.
\begin{enumerate}[(1)]
\item Assume that $g_{j\overline k}(\bm{z}_0)=\delta_{jk}$ and $\chi_{j\overline k}(\bm{z}_0)+\mathrm{D}_{j\overline k}u(\bm{z}_0)=\mu_j\delta_{jk}$ for a point $\bm{z}_0\in\mathcal M$, a local holomorphic coordinate system $(U\mbox{; }z^j)$ containing $\bm{z}_0$ and a vector $\bm\mu\in\mathbb{R}^n$. Then we have
\begin{eq}
F_{\bm\chi+\mathrm{i}\partial\overline\partial u}^{j\overline k}(\bm{z}_0)=\frac{1}{f(\bm\mu)}\left.\frac{\partial f(\bm\lambda)}{\partial \lambda_j}\right|_{\bm\lambda=\bm\mu}\delta_{jk}.
\end{eq}
\item The matrix \eqref{eq: (F^jk)} is positive definite and $F_{\bm\chi+\mathrm{i}\partial\overline\partial u}^{j\overline k}\chi_{j\overline k}\geqslant0$. 
\item $F_{\bm\chi+\mathrm{i}\partial\overline\partial u}^{j\overline k}\left(\chi_{j\overline k}+\mathrm{D}_{j\overline k}u\right)\leqslant\Lambda$. 
\item Let $\bm\omega$ be a positive $(1\comma1)$-form on $\mathcal{M}$. Then we have
\begin{eq}
F_{\bm\chi+\mathrm{i}\partial\overline\partial u}^{j\overline k}\omega_{j\overline k}\geqslant n\left(\frac{\delta\det\left(\bm{\omega}_{\bm g}^{-1}\bm{\omega}\right)}{\varphi^n}\right)^{\frac 1n}.
\end{eq}
\end{enumerate}
\end{prop}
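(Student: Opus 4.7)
The plan is to reduce all four assertions to computations at a point $\bm z_0$ in a carefully chosen local holomorphic coordinate system. Since $(g_{j\overline k})$ is positive definite and $(\chi_{j\overline k}+\mathrm{D}_{j\overline k}u)$ is Hermitian at $\bm z_0$, a linear change of the local holomorphic coordinates allows us to arrange $g_{j\overline k}(\bm z_0)=\delta_{jk}$ and $\chi_{j\overline k}(\bm z_0)+\mathrm{D}_{j\overline k}u(\bm z_0)=\mu_j\delta_{jk}$ simultaneously, with $\bm\mu\in\Gamma$ by admissibility. Every one of the four statements is coordinate-invariant, so it suffices to verify them in this adapted frame.

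For part (1), I would invoke the standard first-order perturbation formula for eigenvalues of a Hermitian matrix at a diagonal configuration, namely $\partial\lambda_l/\partial\omega_{p\overline q}=\delta_{lp}\delta_{lq}$ when the $\mu_j$ are distinct; this extends to coincident eigenvalues by the symmetry of $f$ via a continuity argument. The chain rule applied to $\log f\bigl(\bm\lambda(\bm\omega_{\bm g}^{-1}\bm\omega)\bigr)$, together with $g^{j\overline k}(\bm z_0)=\delta_{jk}$, then yields the claimed diagonal formula for $F_{\bm\chi+\mathrm{i}\partial\overline\partial u}^{j\overline k}(\bm z_0)$.

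Parts (2) and (3) follow directly from (1). The diagonal entries are strictly positive by ellipticity \eqref{item: elliptic}, so $(F^{j\overline k})$ is positive definite, a frame-independent property. For $F^{j\overline k}\chi_{j\overline k}\geqslant0$, the expression reduces in our frame to $\sum_j \frac{1}{f(\bm\mu)}\frac{\partial f}{\partial\lambda_j}(\bm\mu)\chi_{j\overline j}$, and each $\chi_{j\overline j}$ is positive since $\bm\chi$ is a positive $(1,1)$-form. For part (3), the same reduction together with the Euler inequality \eqref{eq: Euler inequality} gives
\begin{equation*}
F^{j\overline k}\bigl(\chi_{j\overline k}+\mathrm{D}_{j\overline k}u\bigr)\big|_{\bm z_0}=\sum_j \frac{\mu_j}{f(\bm\mu)}\frac{\partial f}{\partial\lambda_j}(\bm\mu)\leqslant\Lambda.
\end{equation*}

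Part (4) is the technical core. In our frame, $F^{j\overline k}\omega_{j\overline k}=\sum_j\frac{1}{f(\bm\mu)}\frac{\partial f}{\partial\lambda_j}(\bm\mu)\,\omega_{j\overline j}$. The arithmetic-geometric mean inequality bounds this below by
\begin{equation*}
n\biggl(\prod_j\frac{1}{f(\bm\mu)}\frac{\partial f}{\partial\lambda_j}(\bm\mu)\biggr)^{1/n}\biggl(\prod_j\omega_{j\overline j}\biggr)^{1/n}.
\end{equation*}
The structural condition \eqref{item: structural} together with the equation $f(\bm\mu)=\varphi(\bm z_0)$ controls the first factor by $(\delta/\varphi^n)^{1/n}$, while Hadamard's inequality for the positive Hermitian matrix $(\omega_{j\overline k})$ controls the second by $\bigl(\det(\omega_{j\overline k})\bigr)^{1/n}=\bigl(\det(\bm\omega_{\bm g}^{-1}\bm\omega)\bigr)^{1/n}$ in our frame. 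Combining these two estimates yields the desired bound. The proof involves no genuine obstacle; the care lies in executing the simultaneous diagonalization correctly and chaining AM-GM with Hadamard in the right order.
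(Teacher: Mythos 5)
Your proof is correct and is essentially the standard argument given in the reference the paper cites for this proposition (Guo--Phong): simultaneous diagonalization at a point, the eigenvalue perturbation formula for (1), and then ellipticity, the Euler inequality, and AM--GM combined with condition \eqref{item: structural} and Hadamard's inequality for (2)--(4). All steps, including the direction of Hadamard's inequality and the reduction of $\det\bigl(\bm{\omega}_{\bm g}^{-1}\bm\omega\bigr)$ to $\det(\omega_{j\overline k})$ in the adapted frame, check out.
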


\begin{proof}
See \cite[pp.~101--102]{Guo2023a}.
\end{proof}

At the end of this section, we state a $\mathrm{L}^1$ estimate of admissible functions, i.e. $\mathrm{C}^2$ functions satisfying \eqref{eq: admissible}, on compact Hermitian manifolds. This $\mathrm{L}^1$ estimate will be used in \secref{sec: L^infty estimates on Kahler manifolds} and \secref{sec: L^infty estimates on Hermitian manifolds}.

\begin{prop} \label{prop: L^1 estimate}
Let $u\in\mathrm{C}^2(\mathcal M)$ satisfy \eqref{eq: admissible}. Assume that $\bm\chi$ satisfies $\bm\chi\leqslant M\bm{\omega}_{\bm g}$ for some positive constant $M$. Then we have
\begin{eq}
\int_{\mathcal M}\left(\sup_{\mathcal M}u-u\right)\bm{\omega}_{\bm g}^n\leqslant C\comma
\end{eq}
where $C$ is a positive constant depending only on $\mathcal M$, $\bm\omega_{\bm g}$, $n$ and $M$.
\end{prop}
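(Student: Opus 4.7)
I would normalize $u$ so that $\sup_{\mathcal M} u = 0$, set $v \triangleq -u \geqslant 0$ with $\min_{\mathcal M} v = 0$ attained at some $x_0$, and estimate $\int_{\mathcal M} v\,\bm\omega_{\bm g}^n$. Since $\Gamma \subset \Gamma_1$, admissibility $\bm\lambda\bigl(\bm{\omega}_{\bm g}^{-1}(\bm{\chi}+\mathrm{i}\partial\overline\partial u)\bigr)\in\Gamma$ gives $\sigma_1\geqslant 0$, i.e.\ the pointwise inequality of $(n,n)$-forms $(\bm\chi+\mathrm{i}\partial\overline\partial u)\wedge\bm\omega_{\bm g}^{n-1}\geqslant 0$; combined with $\bm\chi\leqslant M\bm\omega_{\bm g}$ this yields
\begin{eq*}
\mathrm{i}\partial\overline\partial v\wedge\bm\omega_{\bm g}^{n-1}\leqslant\bm\chi\wedge\bm\omega_{\bm g}^{n-1}\leqslant M\bm\omega_{\bm g}^n\quad\text{on }\mathcal M.
\end{eq*}

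On a general Hermitian (non-K\"ahler) manifold the Chern Laplacian of $\bm\omega_{\bm g}$ is not formally self-adjoint, so to turn the above differential inequality into an integral $\mathrm{L}^1$ bound I would pass to a Gauduchon representative in the conformal class. By Gauduchon's theorem there exists $h\in\mathrm{C}^\infty(\mathcal M)$ such that $\tilde{\bm\omega}\triangleq \mathrm{e}^{h/(n-1)}\bm\omega_{\bm g}$ satisfies $\partial\overline\partial\bigl(\tilde{\bm\omega}^{n-1}\bigr)=0$. The operator
\begin{eq*}
\tilde L\phi\triangleq\frac{n\,\mathrm{i}\partial\overline\partial\phi\wedge\tilde{\bm\omega}^{n-1}}{\tilde{\bm\omega}^n}
\end{eq*}
is uniformly elliptic and, by twice integration by parts using $\partial\overline\partial\bigl(\tilde{\bm\omega}^{n-1}\bigr)=0$, formally self-adjoint on $\mathrm{L}^2(\mathcal M,\tilde{\bm\omega}^n)$ with $\int_{\mathcal M}\tilde L\phi\,\tilde{\bm\omega}^n=0$ for every smooth $\phi$. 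Translating the above pointwise inequality through the fixed bounded conformal factor $\mathrm{e}^h$ gives $\tilde L v\leqslant C_1 M$ on $\mathcal M$, with $C_1$ depending only on $\mathcal M$, $\bm\omega_{\bm g}$ and $n$.

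Standard Green's-function theory for a self-adjoint uniformly elliptic operator on a closed manifold then supplies $G\in\mathrm{L}^1(\mathcal M\times\mathcal M)$ and a constant $C_0\geqslant 0$ with $G+C_0\geqslant 0$, $\int_{\mathcal M} G(x,\cdot)\,\tilde{\bm\omega}^n=0$, and for every $\phi\in\mathrm{C}^2(\mathcal M)$,
\begin{eq*}
\phi(x)-\frac{1}{V_{\tilde{\bm\omega}}}\int_{\mathcal M}\phi\,\tilde{\bm\omega}^n=-\int_{\mathcal M} G(x,y)\bigl(\tilde L\phi\bigr)(y)\,\tilde{\bm\omega}^n(y).
\end{eq*}
Setting $x=x_0$ and $\phi=v$ and using $v(x_0)=0$, $\int_{\mathcal M}\tilde L v\,\tilde{\bm\omega}^n=0$, $G+C_0\geqslant 0$ and $\tilde L v\leqslant C_1 M$ yields
\begin{eq*}
\frac{1}{V_{\tilde{\bm\omega}}}\int_{\mathcal M} v\,\tilde{\bm\omega}^n=\int_{\mathcal M}\bigl(G(x_0,y)+C_0\bigr)\bigl(\tilde L v\bigr)(y)\,\tilde{\bm\omega}^n(y)\leqslant C_0 C_1 M V_{\tilde{\bm\omega}}\comma
\end{eq*}
and the comparability $\bm\omega_{\bm g}^n\leqslant\mathrm{e}^{C_2}\tilde{\bm\omega}^n$ (with $C_2$ a bound on $|h|$) delivers the desired estimate $\int_{\mathcal M} v\,\bm\omega_{\bm g}^n\leqslant C$.

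The main obstacle is precisely the failure of the Chern Laplacian of $\bm\omega_{\bm g}$ to be formally self-adjoint on a general Hermitian manifold, so that Green's-function identities cannot be applied directly to the inequality extracted from admissibility. The conformal passage to a Gauduchon representative is the crucial device that removes this defect; in the K\"ahler case $\bm\omega_{\bm g}^{n-1}$ is already $\partial\overline\partial$-closed and one could work directly with $\bm\omega_{\bm g}$.
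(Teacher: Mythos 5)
Your overall strategy --- extracting $(\bm\chi+\mathrm{i}\partial\overline\partial u)\wedge\bm\omega_{\bm g}^{n-1}\geqslant0$ from $\Gamma\subset\Gamma_1$, passing to a Gauduchon representative $\tilde{\bm\omega}$, and running the Green's-function representation at the minimum point of $v$ --- is the standard argument, and it is essentially what the paper's cited source \cite{Guo2023b} does (the paper itself only outsources the proof there). The first step, the cancellation $\int_{\mathcal M}\tilde Lv\,\tilde{\bm\omega}^n=0$, and the final bookkeeping through the bounded conformal factor are all correct.

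However, your justification of the Green's-function input contains a genuine error: the operator $\tilde L$ is \emph{not} formally self-adjoint on $\mathrm{L}^2(\mathcal M\comma\tilde{\bm\omega}^n)$. Two integrations by parts give
\begin{eq*}
\int_{\mathcal M}\psi\,\mathrm{i}\partial\overline\partial\phi\wedge\tilde{\bm\omega}^{n-1}=-\int_{\mathcal M}\mathrm{i}\partial\psi\wedge\overline\partial\phi\wedge\tilde{\bm\omega}^{n-1}+\int_{\mathcal M}\psi\,\mathrm{i}\overline\partial\phi\wedge\partial\bigl(\tilde{\bm\omega}^{n-1}\bigr)\comma
\end{eq*}
and the Gauduchon condition $\partial\overline\partial\bigl(\tilde{\bm\omega}^{n-1}\bigr)=0$ does not kill the last term; symmetry in $\phi\leftrightarrow\psi$ would require $\partial\bigl(\tilde{\bm\omega}^{n-1}\bigr)=0$, i.e.\ $\tilde{\bm\omega}$ balanced, which is strictly stronger and not always achievable in a conformal class. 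What Gauduchon \emph{does} give (take $\psi\equiv1$ above and integrate by parts once more) is $\int_{\mathcal M}\tilde L\phi\,\tilde{\bm\omega}^n=0$ for every $\phi$, i.e.\ the constants lie in the kernel of the formal adjoint $\tilde L^*$. That is exactly the normalization your representation formula needs, but the Green's function you must then invoke is that of the \emph{non-self-adjoint} operator $\tilde L$ (so $G(x\comma\cdot)$ solves $\tilde L^*G(x\comma\cdot)=\delta_x-1/V_{\tilde{\bm\omega}}$), and its existence, uniform $\mathrm{L}^1$ bound and uniform lower bound are not supplied by the ``self-adjoint'' theory you cite; they follow from the Green's-function theory for second-order elliptic operators without zeroth-order term on closed manifolds, as developed for the Chern Laplacian in the Hermitian Monge--Amp\`ere literature. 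With that substitution the rest of your computation goes through and yields the stated estimate.
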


\begin{proof}
See \cite[p.~18]{Guo2023b} for the proof when $\bm\chi=\bm\omega_{\bm g}$. The proof for the general case is almost the same.
\end{proof}


\section[L∞ estimates on K\"ahler manifolds]{$\mathrm{L}^\infty$ estimates on K\"ahler manifolds} \label{sec: L^infty estimates on Kahler manifolds}

In this section, we prove \thmref{thm: L^infty estimates on Kahler manifolds}. Let $(\mathcal{M}\comma\bm{\omega}_{\bm g})$ be a compact K\"ahler manifold of dimension $n$, $\bm\chi$ be a K\"ahler form on $\mathcal M$ satisfying $\bm\chi\leqslant M\bm{\omega}_{\bm g}$ for some positive constant $M$, and $\varphi$ be a positive smooth function on $\mathcal M$. Assume that $u\in\mathrm{C}^2(\mathcal M)$ satisfies
\begin{eq}
\left\{ \begin{gathered}
f\Bigl(\bm\lambda\bigl(\bm{\omega}_{\bm g}^{-1}(\bm{\chi}+\mathrm{i}\partial\overline\partial u)\bigr)\Bigr)\leqslant\varphi\comma \\
\bm\lambda\bigl(\bm{\omega}_{\bm g}^{-1}(\bm{\chi}+\mathrm{i}\partial\overline\partial u)\bigr)\in\Gamma\comma
\end{gathered} \right.
\end{eq}
where $\Gamma$ is a symmetric open convex cone in $\mathbb{R}^n$ satisfying $\Gamma_n\subset\Gamma\subset\Gamma_1$ and $f$ is a positive $\mathrm{C}^1$ symmetric function defined in $\Gamma$ satisfying the conditions \eqref{item: elliptic}, \eqref{item: structural} and \eqref{eq: Euler inequality} on page \pageref{item: elliptic}. Let $V_{\bm g}$, $V_{\bm\chi}$ and $\Ent_{q\comma r}\left(\frac{\varphi^n}{V_{\bm\chi}}\right)$ denote $\int_{\mathcal M} \bm\omega_{\bm g}^n$, $\int_{\mathcal M} \bm\chi^n$ and
\begin{eq}
\left\|\frac{\varphi^n}{V_{\bm\chi}}\right\|_{\mathrm{L}^1(\log\mathrm{L})^q(\log\log\mathrm{L})^r(\mathcal M\comma\bm{\omega}_{\bm g}^n)}
\end{eq}
respectively (see \definitionref{definition: L^p(logL)^q(loglogL)^r}). First, we prove an energy estimate which improves that in \cite{Guo2022}. 

\begin{prop} \label{prop: energy estimate}
Fix $q>0$ and
\begin{eq}
a\in\begin{dcases}
\left(0\comma\frac{nq}{n-q}\right]\comma & q<n \\
(0\comma{+}\infty)\comma & q\geqslant n
\end{dcases}.
\end{eq}
Then we have
\begin{eq} \label{eq: energy estimate}
\int_{\mathcal M} \left(\sup_{\mathcal M}u-u\right)^a\frac{\varphi^n}{V_{\bm\chi}}\bm\omega_{\bm g}^n\leqslant C\left(\mathrm{e}^{C\left(\Ent_{q\comma0}\left(\frac{\varphi^n}{V_{\bm\chi}}\right)\right)^{\frac 1q}}-1\right)^a\Biggl(1+\Ent_{q\comma0}\left(\frac{\varphi^n}{V_{\bm\chi}}\right)\Biggr)\comma
\end{eq}
where $C$ is a positive constant depending only on $\mathcal M$, $\bm\omega_{\bm g}$, $n$, $q$, $a$, $\frac{1}{\delta}$, $\Lambda$, and $M$.
\end{prop}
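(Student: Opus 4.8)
The plan is to adapt the auxiliary complex Monge-Amp\`ere comparison of Guo-Phong-Tong but to keep careful track of how the constants degrade with the entropy. First I would fix a constant $c>0$ so that, after replacing $\varphi^n/V_{\bm\chi}$ by its Luxemburg-scaled version, the integral $\int_{\mathcal M}\Phi_{q,0,0}\bigl(\varphi^n/(cV_{\bm\chi})\bigr)\bm\omega_{\bm g}^n\leqslant 1$; by \propref{prop: estimates of norms} one may take $c\sim\max\{1,\Ent_{q,0}(\varphi^n/V_{\bm\chi})^{1/q}\}$. Normalizing $\sup_{\mathcal M}u=0$ so that $v\triangleq -u\geqslant 0$, I would introduce the sublevel-set quantity $A(t)\triangleq\int_{\{v>t\}}\varphi^n/V_{\bm\chi}\,\bm\omega_{\bm g}^n$ and aim to prove a De Giorgi-type recursion $A(s)\leqslant \dfrac{C}{(s-t)^{\kappa}}A(t)\log^{-\beta}\bigl(1+\tfrac{1}{A(t)}\bigr)$ for suitable exponents, from which \thmref{thm: an iteration lemma of De Giorgi type} (or rather the finiteness of $T_\gamma$) will give the decay needed to sum $\int v^a\,\varphi^n/V_{\bm\chi}\,\bm\omega_{\bm g}^n=a\int_0^\infty t^{a-1}A(t)\,\mathrm{d}t$.

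The engine for that recursion is the comparison with an auxiliary Monge-Amp\`ere equation on $\{v>t\}$: one solves $\det(\bm\omega_{\bm g}+\mathrm{i}\partial\overline\partial\phi_{s,t})=\tau A(t)^{-1}\mathbf{1}_{\{v>t\}}\varphi^n\,\bm\omega_{\bm g}^n/V_{\bm\chi}$ (with $\tau$ the Kolodziej-type normalizing constant) and uses \propref{prop: properties of F^jk}(4) together with the structural constant $\delta$ and the Euler inequality bound $\Lambda$ to control $\osc$ of $u$ on $\{v>t\}$ by a power of $\|\phi_{s,t}\|_{\mathrm L^\infty}$; the $\mathrm L^\infty$ bound on $\phi_{s,t}$ in turn is the place where the $\mathrm L^q$ bound on the (rescaled) density enters, producing the $\log^{-\beta}$ gain through the H\"older-Young inequality \thmref{thm: an inequality of Holder-Young type} applied on the small set $\{v>s\}$. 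I would then feed this into the iteration, being careful that the constant $C$ appearing in \eqref{eq: assumption of f} is itself proportional to $c$, hence to $\Ent_{q,0}(\varphi^n/V_{\bm\chi})^{1/q}$, which is exactly what produces the factor $\bigl(\mathrm e^{C\,\Ent_{q,0}^{1/q}}-1\bigr)^a$ in \eqref{eq: energy estimate}; the remaining factor $1+\Ent_{q,0}$ comes from bounding the total mass $A(0)\leqslant\int\varphi^n/V_{\bm\chi}\,\bm\omega_{\bm g}^n$, which by \propref{prop: estimate of integration} is $\lesssim 1+\Ent_{q,0}$. The constraint on $a$ (namely $a\leqslant nq/(n-q)$ when $q<n$) enters precisely because the exponent $\beta/\kappa$ governing the decay of $A$ in the iteration lemma equals $q/n$ plus the Sobolev-type gain, so that $\int_0^\infty t^{a-1}A(t)\,\mathrm dt$ converges only for $a$ below that threshold; when $q\geqslant n$ one gets decay to zero in finite ``time'' and $a$ is unrestricted.

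The $\mathrm L^1$ estimate \propref{prop: L^1 estimate} is used to get started: it guarantees $A(t)$ is finite and decaying, and more importantly it supplies the base-level bound on the measure of $\{v>t_0\}$ needed to launch the iteration with an explicit $t_0$. I would also use \propref{prop: estimate of integration} to convert the Luxemburg norm back into the integral form stated in the proposition, and the elementary monotonicity $\log^r$-type inequalities already collected in \secref{sec: Orlicz spaces and an inequality of Holder-Young type} to absorb lower-order logarithmic factors.

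The main obstacle I expect is the bookkeeping of how the $\mathrm L^\infty$ bound on the auxiliary potential $\phi_{s,t}$ depends \emph{simultaneously} on $(s-t)$, on $A(t)$, and on the fixed normalization constant $c$ — one must extract a clean inequality of exactly the shape \eqref{eq: assumption of f} with $\beta>\alpha$, and the power $\beta$ must come out at least $q/n$ (times the Sobolev gain) uniformly, without hidden dependence on $t$. A secondary subtlety is ensuring the normalizing constant $\tau$ of the auxiliary Monge-Amp\`ere equation stays controlled as $A(t)\to 0$, i.e. that the Kolodziej mass condition is met with constants depending only on the allowed quantities; this is where the hypothesis $\bm\chi\leqslant M\bm\omega_{\bm g}$ and the first quantity $c_{\bm\chi}^n/V_{\bm\chi}$ (here absorbed into $\varphi^n/V_{\bm\chi}$) are used.
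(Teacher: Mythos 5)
The proposal takes a genuinely different route from the paper, and it has a gap that prevents it from proving the stated result.

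The paper does \emph{not} use the De Giorgi iteration lemma (\thmref{thm: an iteration lemma of De Giorgi type}) to prove \propref{prop: energy estimate}; iteration is reserved for the subsequent $\mathrm{L}^\infty$ bound (which needs $r>n$). The energy estimate instead uses a single cutoff: one solves the auxiliary Monge-Amp\`ere equation with right side $\bigl(\tau_l(\tilde u_s)\bigr)^a\varphi^n/A_{s,l}$, derives the comparison inequality $\tilde u_s\leqslant C_1 A_{s,l}^{1/(n+a)}(-v_{s,l}+C_2 A_{s,l}^{1/a})^{n/(n+a)}$ by the maximum principle, uses Tian's $\alpha$-invariant together with the classical Young inequality to get the \emph{power} estimate $\int(-v_{s,l})^q\,\varphi^n/V_{\bm\chi}\,\bm\omega_{\bm g}^n\lesssim\Ent_{q,0}$, and then applies H\"older to land on a self-referencing inequality for $A_s$ of the form $A_s\leqslant C_6\Ent^{na/((n+a)q)}A_s^{a/(n+a)}h(s)^{\ast}+C_7A_s\,h(s)$. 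The final step picks one level $s_0$ large enough that $C_7h(s_0)\leqslant\frac12$ (using a separate Young + $\mathrm{L}^1$-estimate bound $h(s)\lesssim\log^{-q}(s+2)\Ent$), which absorbs the last term and yields $A_{s_0}\lesssim\Ent^{a/q}$; the exponential factor in \eqref{eq: energy estimate} comes solely from inverting $\log^{-q}(s_0+2)$, not from any iteration time $T_\gamma$.

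Your plan runs into several concrete problems. First, the iteration lemma requires $\beta>\alpha$, which in the form of recursion you propose would translate to $q>n$; but \propref{prop: energy estimate} must hold for all $q>0$, including $q<n$, so your engine does not even start for half of the allowed range. Second, a De Giorgi recursion for $h(s)=\int_{\Omega_s}\varphi^n/V_{\bm\chi}\,\bm\omega_{\bm g}^n$ with constants controlled only by $\Ent$ is exactly what the paper proves later in \secref{sec: L^infty estimates on Kahler manifolds}, and there the constant $C_9$ in the recursion depends on an a priori bound on $A_s$ — i.e.\ on the energy estimate being proved. Your argument would therefore be circular: the comparison coming out of the auxiliary equation naturally bounds $\tilde u_s$ in terms of $A_s$ (an integral against $\tilde u_s^a$), not $h(s)$, and passing from one to the other is precisely what the energy estimate supplies. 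Third, your explanation of where the constraint $a\leqslant nq/(n-q)$ comes from is incorrect: in the paper it arises from keeping the H\"older exponent $\frac{na}{(n+a)q}$ at most $1$, not from integrability of $\int_0^\infty t^{a-1}A(t)\,\dif t$ against some decay rate of $A$. Finally, the exponential factor $\bigl(\mathrm{e}^{C\Ent^{1/q}}-1\bigr)^a$ is not produced by the $T_\gamma$ of the iteration lemma (which would be at most polynomial in the recursion constant $C$, hence polynomial in $\Ent$); it is produced by solving $\log^{-q}(s_0+2)\Ent\leqslant\text{const}$ for $s_0$. In short, you have transported the iteration machinery from the $\mathrm{L}^\infty$ theorem into the energy lemma, where it both fails to apply for $q\leqslant n$ and would be circular even when it does.
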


\begin{proof}
We follow the strategy of \cite{Guo2022}. For $l\in\mathbb{N}$, let $\tau_l$ be a smooth non-decreasing function defined on $\mathbb{R}$ such that
\begin{eq} \label{eq: tau_l}
\tau_l(t)=\begin{dcases}
\frac{1}{2l}\comma & t\leqslant -\frac1l \\
t+\frac1l\comma & t\geqslant 0
\end{dcases}.
\end{eq}
See e.g. \propref{prop: glue two smooth convex functions defined on disjointed intervals} for the existence of such a function $\tau_l$. We may as well assume that $\osc\limits_{\mathcal M} u>0$. Fix $s\in\left[0\comma\osc\limits_{\mathcal M} u\right)$ and define
\begin{eq} \label{eq: tilde u_s}
\tilde{u}_s(\bm z)\triangleq \sup_{\mathcal M} u-u(\bm z)-s\comma\bm z\in\mathcal M.
\end{eq}
By the celebrated theorem of \citeauthor{Yau1978} \cite{Yau1978}, we can let $v_{s\comma l}$ be the unique smooth solution to the auxiliary complex Monge-Amp\`ere equation
\begin{eq} \label{eq: auxiliary complex Monge-Ampere}
\left\{ \begin{gathered}
\det\Bigl(\bm\omega_{\bm g}^{-1}\left(\bm\chi+\mathrm{i}\partial\overline\partial v_{s\comma l}\right)\Bigr)=\frac{\bigl(\tau_l\left(\tilde{u}_s\right)\bigr)^a}{A_{s\comma l}}\varphi^n\comma \\
\bm\chi+\mathrm{i}\partial\overline\partial v_{s\comma l}>\bm{0}\comma\sup_{\mathcal M} v_{s\comma l}=0.
\end{gathered} \right.
\end{eq}
on $(\mathcal M\comma\bm\omega_{\bm g})$, where the normalized constant $A_{s\comma l}$ is defined as
\begin{eq} \label{eq: A_sl}
A_{s\comma l}\triangleq\int_{\mathcal M} \bigl(\tau_l\left(\tilde{u}_s\right)\bigr)^a\frac{\varphi^n}{V_{\bm\chi}}\bm\omega_{\bm g}^n
\end{eq}
to make \eqref{eq: auxiliary complex Monge-Ampere} compatible. Let $\Omega_s$ denote the set
\begin{eq} \label{eq: Omega_s, energy estimate}
\bigl\{\bm{z}\in\mathcal M\bigm|\tilde{u}_s(\bm z)>0\bigr\}.
\end{eq}
Note that
\begin{eq} \label{eq: A_s}
\lim_{l\to\infty}A_{s\comma l}=A_s\triangleq\int_{\Omega_s} \tilde{u}_s^a\frac{\varphi^n}{V_{\bm\chi}}\bm\omega_{\bm g}^n>0. 
\end{eq}
Now we define the comparison function
\begin{eq}
w\triangleq-\nu(-v_{s\comma l}+B)^b+\tilde{u}_s\comma
\end{eq}
where $\nu$, $B$, $b$ are positive constants to be chosen later so that $w\leqslant 0$ on $\mathcal M$. Assume that $\max\limits_{\mathcal M} w=w(\bm{z}_0)$ for some point $\bm{z}_0\in\mathcal M$. If $\bm{z}_0\in\mathcal M\setminus\Omega_s$, it's easy to see that $w(\bm{z}_0)\leqslant 0$. Henceforth we  assume that $\bm{z}_0\in\Omega_s$ and choose a local holomorphic coordinate system $(U\mbox{; }z^j)$ containing $\bm{z}_0$. Straightforward calculations show that
\begin{eq}
\mathrm{D}_{j\overline k}w=\nu b(1-b)(-v_{s\comma l}+B)^{b-2}\mathrm{D}_jv_{s\comma l}\mathrm{D}_{\overline k}v_{s\comma l}+\nu b(-v_{s\comma l}+B)^{b-1}\mathrm{D}_{j\overline k}v_{s\comma l}-\mathrm{D}_{j\overline k}u.
\end{eq}
Note that the matrix $\left(\mathrm{D}_{j\overline k}w(\bm{z}_0)\right)$ is negative semi-definite since $\bm{z}_0$ is the maximum point of $w$. Recalling the definition \eqref{eq: F^jk} of $F_{\bm\chi+\mathrm{i}\partial\overline\partial u}^{j\overline k}$ and the property (2) in \propref{prop: properties of F^jk}, we assume that $b<1$ and calculate at $\bm{z}_0$ as follows:
\begin{eq} \begin{aligned}
0&\geqslant F_{\bm\chi+\mathrm{i}\partial\overline\partial u}^{j\overline k}\mathrm{D}_{j\overline k}w \\
&\geqslant\nu b(-v_{s\comma l}+B)^{b-1}F_{\bm\chi+\mathrm{i}\partial\overline\partial u}^{j\overline k}\mathrm{D}_{j\overline k}v_{s\comma l}-F_{\bm\chi+\mathrm{i}\partial\overline\partial u}^{j\overline k}\mathrm{D}_{j\overline k}u \\
&\geqslant\nu b(-v_{s\comma l}+B)^{b-1}F_{\bm\chi+\mathrm{i}\partial\overline\partial u}^{j\overline k}\left(\chi_{j\overline k}+\mathrm{D}_{j\overline k}v_{s\comma l}\right)-F_{\bm\chi+\mathrm{i}\partial\overline\partial u}^{j\overline k}\left(\chi_{j\overline k}+\mathrm{D}_{j\overline k}u\right) \\
&\phantom{=}+(1-\nu bB^{b-1})F_{\bm\chi+\mathrm{i}\partial\overline\partial u}^{j\overline k}\chi_{j\overline k}.
\end{aligned} \end{eq}
Assume further that
\begin{eq} \label{eq: B}
\text{$1-\nu bB^{b-1}=0$, i.e. $B=(\nu b)^{\frac{1}{1-b}}$}.
\end{eq}
Then we have at $\bm{z}_0$,
\begin{eq} \begin{aligned}
0&\geqslant\nu b(-v_{s\comma l}+B)^{b-1}F_{\bm\chi+\mathrm{i}\partial\overline\partial u}^{j\overline k}\left(\chi_{j\overline k}+\mathrm{D}_{j\overline k}v_{s\comma l}\right)-F_{\bm\chi+\mathrm{i}\partial\overline\partial u}^{j\overline k}\left(\chi_{j\overline k}+\mathrm{D}_{j\overline k}u\right) \\
&\geqslant\nu b(-v_{s\comma l}+B)^{b-1}n\left(\frac{\delta\tilde{u}_s^a}{A_{s\comma l}}\right)^{\frac 1n}\frac{\varphi}{f\Bigl(\bm\lambda\bigl(\bm{\omega}_{\bm g}^{-1}(\bm{\chi}+\mathrm{i}\partial\overline\partial u)\bigr)\Bigr)}-\Lambda \\
&\geqslant\nu b(-v_{s\comma l}+B)^{b-1}n\delta^{\frac 1n}A_{s\comma l}^{-\frac 1n}\tilde{u}_s^{\frac an}-\Lambda\comma
\end{aligned} \end{eq}
where the second ``$\geqslant$'' is due to the properties (3), (4) in \propref{prop: properties of F^jk} and the fact that at $\bm{z}_0$,
\begin{eq}
\tau_l\left(\tilde{u}_s\right)=\tilde{u}_s+\frac 1l.
\end{eq}
It follows that at $\bm{z}_0$,
\begin{eq}
\tilde{u}_s\leqslant\left(\frac{\Lambda}{bn\delta^{\frac 1n}}\right)^{\frac na}\nu^{-\frac na}A_{s\comma l}^{\frac 1a}(-v_{s\comma l}+B)^{\frac{n(1-b)}{a}}
\end{eq}
and
\begin{eq}
w\leqslant\nu(-v_{s\comma l}+B)^b\left(-1+\left(\frac{\Lambda}{bn\delta^{\frac 1n}}\right)^{\frac na}\nu^{-\frac na-1}A_{s\comma l}^{\frac 1a}(-v_{s\comma l}+B)^{\frac{n(1-b)}{a}-b}\right).
\end{eq}
Now we choose the constants $b$, $\nu$ and $B$ as follows:
\begin{ga} \label{eq: b, nu and B}
\text{$\frac{n(1-b)}{a}-b=0$, i.e. $b=\frac{n}{n+a}\in(0\comma1)$}\comma \\
\text{$-1+\left(\frac{\Lambda}{bn\delta^{\frac 1n}}\right)^{\frac na}\nu^{-\frac na-1}A_{s\comma l}^{\frac 1a}=0$, i.e. $\nu=\left(\frac{(n+a)\Lambda}{n^2\delta^{\frac 1n}}\right)^{\frac{n}{n+a}}A_{s\comma l}^{\frac{1}{n+a}}$}\comma \\
B=(\nu b)^{\frac{1}{1-b}}=\left(\frac{n}{n+a}\left(\frac{(n+a)\Lambda}{n^2\delta^{\frac 1n}}\right)^{\frac{n}{n+a}}\right)^{\frac{n+a}{a}}A_{s\comma l}^{\frac 1a}.
\end{ga}
Then we obtain $\max\limits_{\mathcal M}w=w(\bm{z}_0)\leqslant 0$, i.e.
\begin{eq} \label{eq: comparison inequality}
\tilde{u}_s\leqslant\nu(-v_{s\comma l}+B)^b=C_1A_{s\comma l}^{\frac{1}{n+a}}\left(-v_{s\comma l}+C_2A_{s\comma l}^{\frac 1a}\right)^{\frac{n}{n+a}}
\end{eq}
on $\mathcal M$. Thus, on $\Omega_s$ we have 
\begin{eq} \label{eq: key estimate}
\frac{\tilde{u}_s^{\frac{n+a}{n}q}}{A_{s\comma l}^{\frac qn}}\leqslant C_1^{\frac{n+a}{n}q}\left(-v_{s\comma l}+C_2A_{s\comma l}^{\frac 1a}\right)^q\leqslant C_1^{\frac{n+a}{n}q}\max\{1\comma 2^{q-1}\}\left((-v_{s\comma l})^q+C_2^qA_{s\comma l}^{\frac qa}\right).
\end{eq}
Note that $M\bm\omega_{\bm g}+\mathrm{i}\partial\overline\partial v_{s\comma l}>\bm{0}$. In view of the well-known lemma related to the $\alpha$-invariants in \cite[pp.~228--229]{Tian1987} , there exists a positive constant $\alpha$ depending only on $\mathcal M$, $\bm\omega_{\bm g}$, $n$ and $M$ such that
\begin{eq} \label{eq: alpha-invariant}
\int_{\mathcal M} \mathrm{e}^{-\alpha v_{s\comma l}}\bm\omega_{\bm g}^n\leqslant C_3,
\end{eq}
where $C_3$ is a positive constant depending only on $\mathcal M$, $\bm\omega_{\bm g}$ and $n$. By the classical Young inequality we have
\begin{eq} \begin{aligned}
(-v_{s\comma l})^q\psi&\leqslant\int_0^{(-v_{s\comma l})^q} \left(\mathrm{e}^{\frac{\alpha}{2}t^{\frac 1q}}-1\right)\dif t+\int_0^\psi \left(\frac{2}{\alpha}\log(1+t)\right)^q\dif t \\
&\leqslant(-v_{s\comma l})^q\mathrm{e}^{-\frac{\alpha}{2}v_{s\comma l}}+\left(\frac{2}{\alpha}\right)^q\psi\log^q(1+\psi) \\
&\leqslant\left(\frac{2q}{\mathrm{e}\alpha}\right)^q\mathrm{e}^{-\alpha v_{s\comma l}}+\left(\frac{2}{\alpha}\right)^q\psi\log^q(1+\psi)
\end{aligned} \end{eq}
for any positive smooth function $\psi$ on $\mathcal M$. Letting 
\begin{eq} \label{eq: psi}
\psi=\frac{\frac{\varphi^{n}}{V_{\bm\chi}}}{\Ent_{q\comma 0}\left(\frac{\varphi^{n}}{V_{\bm\chi}}\right)}\comma
\end{eq}
and integrating both sides of the acquired inequality over $\mathcal M$, we obtain
\begin{eq}
\int_{\mathcal M} (-v_{s\comma l})^q\frac{\varphi^{n}}{V_{\bm\chi}}\bm\omega_{\bm g}^n\leqslant\Biggl(\left(\frac{2q}{\mathrm{e}\alpha}\right)^qC_3+\left(\frac{2}{\alpha}\right)^q\Biggr) \Ent_{q\comma 0}\left(\frac{\varphi^{n}}{V_{\bm\chi}}\right)
\end{eq}
by \eqref{eq: alpha-invariant} and \eqref{eq: to be used in the proof of the inequality of Holder-Young type}. This inequality together with \eqref{eq: key estimate} implies
\begin{eq}
\int_{\Omega_s} \frac{\tilde{u}_s^{\frac{n+a}{n}q}}{A_{s\comma l}^{\frac qn}}\frac{\varphi^n}{V_{\bm\chi}}\bm\omega_{\bm g}^n\leqslant C_4\Ent_{q\comma 0}\left(\frac{\varphi^{n}}{V_{\bm\chi}}\right)+C_5A_{s\comma l}^{\frac qa}\int_{\Omega_s} \frac{\varphi^n}{V_{\bm\chi}}\bm\omega_{\bm g}^n.
\end{eq}
Note that the above inequality holds for any $l\in\mathbb{N}$. Letting $l\to\infty$ and recalling \eqref{eq: A_s}, we obtain
\begin{eq} \label{eq: key estimate, further}
\int_{\Omega_s} \tilde{u}_s^{\frac{n+a}{n}q}\frac{\varphi^n}{V_{\bm\chi}}\bm\omega_{\bm g}^n\leqslant C_4\Ent_{q\comma 0}\left(\frac{\varphi^{n}}{V_{\bm\chi}}\right)A_s^{\frac qn}+C_5A_s^{\frac{n+a}{na}q}h(s)\comma
\end{eq}
where $h(s)$ denotes
\begin{eq} \label{eq: h(s)}
\int_{\Omega_s} \frac{\varphi^n}{V_{\bm\chi}}\bm\omega_{\bm g}^n.
\end{eq}
Since $a\leqslant\frac{nq}{n-q}$ when $q<n$, by the H\"older inequality and \eqref{eq: key estimate, further} we have
\begin{eq} \label{eq: A_s, estimate} \begin{aligned}
A_s&=\int_{\Omega_s} \tilde{u}_s^a\frac{\varphi^n}{V_{\bm\chi}}\bm\omega_{\bm g}^n \\
&\leqslant\left(\int_{\Omega_s} \tilde{u}_s^{\frac{n+a}{n}q}\frac{\varphi^n}{V_{\bm\chi}}\bm\omega_{\bm g}^n\right)^{\frac{na}{(n+a)q}}\left(\int_{\Omega_s} \frac{\varphi^n}{V_{\bm\chi}}\bm\omega_{\bm g}^n\right)^{\frac{(n+a)q-na}{(n+a)q}} \\
&\leqslant\Biggl(C_4\Ent_{q\comma 0}\left(\frac{\varphi^{n}}{V_{\bm\chi}}\right)A_s^{\frac qn}+C_5A_s^{\frac{n+a}{na}q}h(s)\Biggr)^{\frac{na}{(n+a)q}} \bigl(h(s)\bigr)^{\frac{(n+a)q-na}{(n+a)q}} \\
&\leqslant C_6\Biggl(\Ent_{q\comma 0}\left(\frac{\varphi^{n}}{V_{\bm\chi}}\right)\Biggr)^{\frac{na}{(n+a)q}}A_s^{\frac{a}{n+a}}\bigl(h(s)\bigr)^{\frac{(n+a)q-na}{(n+a)q}}+C_7A_sh(s)
\end{aligned} \end{eq}
for any $s\in\left[0\comma\osc\limits_{\mathcal M} u\right)$. On the other hand, by the classical Young inequality and \propref{prop: L^1 estimate} we have
\begin{eq} \begin{aligned}
&\mathrel{\phantom{=}}\int_{\Omega_s} 2^{-q}\log^q\left(\sup\limits_{\mathcal M}u-u+2\right)\psi\bm\omega_{\bm g}^n \\
&\leqslant\int_{\Omega_s} \left(\int_0^{2^{-q}\log^q\left(\sup\limits_{\mathcal M}u-u+2\right)} \left(\mathrm{e}^{t^{\frac 1q}}-1\right)\dif t+\int_0^\psi \log^q(1+t)\dif t\right)\bm\omega_{\bm g}^n \\
&\leqslant\int_{\Omega_s} \left(2^{-q}\log^q\left(\sup\limits_{\mathcal M}u-u+2\right)\left(\sup\limits_{\mathcal M}u-u+2\right)^{\frac 12}+\psi\log^q(1+\psi)\right)\bm\omega_{\bm g}^n \\
&\leqslant\left(\frac{q}{\mathrm e}\right)^q\int_{\Omega_s} \left(\sup\limits_{\mathcal M}u-u+2\right)\bm\omega_{\bm g}^n+\int_{\Omega_s} \psi\log^q(1+\psi)\bm\omega_{\bm g}^n \\
&\leqslant\left(\frac{q}{\mathrm e}\right)^qC_8+\int_{\mathcal M} \psi\log^q(1+\psi)\bm\omega_{\bm g}^n
\end{aligned} \end{eq}
for any $s\geqslant 0$ and any positive smooth function $\psi$ on $\mathcal M$. Defining $\psi$ as that in \eqref{eq: psi} and recalling the definition \eqref{eq: h(s)} of $h(s)$, we obtain
\begin{eq}
h(s)\leqslant C_9\log^{-q}(s+2)\Ent_{q\comma0}\left(\frac{\varphi^n}{V_{\bm\chi}}\right)
\end{eq}
for any $s\geqslant0$ by \eqref{eq: to be used in the proof of the inequality of Holder-Young type}. In particular, for
\begin{eq}
s_0\triangleq\mathrm{e}^{\left(2C_7C_9\Ent_{q\comma0}\left(\frac{\varphi^n}{V_{\bm\chi}}\right)\right)^{\frac 1q}}-1
\end{eq}
there holds
\begin{eq}
h(s_0)\leqslant\frac{1}{2C_7}.
\end{eq}
If $s_0\geqslant\osc\limits_{\mathcal M}u$, we are done. Henceforth we assume that $s_0<\osc\limits_{\mathcal M}u$ and therefore $A_{s_0}>0$. In view of \eqref{eq: A_s, estimate}, we have
\begin{eq}
A_{s_0}\leqslant C_{10}\Biggl(\Ent_{q\comma 0}\left(\frac{\varphi^n}{V_{\bm\chi}}\right)\Biggr)^{\frac aq}.
\end{eq}
Recalling the definition \eqref{eq: A_s} of $A_{s_0}$, we see that
\begin{eq}
\int_{\Omega_{s_0}} \left(\sup\limits_{\mathcal M}u-u-s_0\right)^a\frac{\varphi^n}{V_{\bm\chi}}\bm\omega_{\bm g}^n\leqslant C_{10}\Biggl(\Ent_{q\comma 0}\left(\frac{\varphi^n}{V_{\bm\chi}}\right)\Biggr)^{\frac aq}.
\end{eq}
It follows that 
\begin{eq} \begin{aligned}
&\mathrel{\phantom{=}}\int_{\mathcal M} \left(\sup_{\mathcal M}u-u\right)^a\frac{\varphi^n}{V_{\bm\chi}}\bm\omega_{\bm g}^n \\
&\leqslant \int_{\Omega_{s_0}} \left(\sup_{\mathcal M}u-u-s_0+s_0\right)^a\frac{\varphi^n}{V_{\bm\chi}}\bm\omega_{\bm g}^n+s_0^a\int_{\mathcal M\setminus\Omega_{s_0}} \frac{\varphi^n}{V_{\bm\chi}}\bm\omega_{\bm g}^n \\
&\leqslant\max\{1\comma2^{a-1}\}\Biggl(\int_{\Omega_{s_0}} \left(\sup_{\mathcal M}u-u-s_0\right)^a\frac{\varphi^n}{V_{\bm\chi}}\bm\omega_{\bm g}^n+s_0^a\int_{\mathcal M} \frac{\varphi^n}{V_{\bm\chi}}\bm\omega_{\bm g}^n\Biggr) \\
&\leqslant\max\{1\comma2^{a-1}\}C_{10}\Biggl(\Ent_{q\comma 0}\left(\frac{\varphi^n}{V_{\bm\chi}}\right)\Biggr)^{\frac aq} \\
&\phantom{=}+\max\{1\comma2^{a-1}\}\left(\mathrm{e}^{\left(2C_7C_9\Ent_{q\comma0}\left(\frac{\varphi^n}{V_{\bm\chi}}\right)\right)^{\frac 1q}}-1\right)^a\int_{\mathcal M} \frac{\varphi^n}{V_{\bm\chi}}\bm\omega_{\bm g}^n.\end{aligned} \end{eq}
Finally, by the classical Young inequality and \eqref{eq: to be used in the proof of the inequality of Holder-Young type} it's easy to see that
\begin{eq}
\frac{1}{\Ent_{q\comma0}\left(\frac{\varphi^n}{V_{\bm\chi}}\right)} \int_{\mathcal M} \frac{\varphi^n}{V_{\bm\chi}}\bm\omega_{\bm g}^n\leqslant(\mathrm{e}-1)V_{\bm g}+1\comma
\end{eq}
and therefore we have proved \eqref{eq: energy estimate}.
\end{proof}

Next we prove the sharp $\mathrm{L}^\infty$ estimate in \thmref{thm: L^infty estimates on Kahler manifolds}. Note that for any positive smooth function $\psi$ on $\mathcal M$, $\Ent_{n\comma 0}(\psi)$ can be controlled by $\Ent_{n\comma r}(\psi) (r>0)$ since we have
\begin{eq} \begin{aligned}
&\mathrel{\phantom{=}}\int_{\mathcal M} \frac{\psi}{\Ent_{n\comma r}(\psi)}\log^n\left(1+\frac{\psi}{\Ent_{n\comma r}(\psi)}\right)\bm\omega_{\bm g}^n \\
&\leqslant(\mathrm{e}-1)V_{\bm g}+\frac{1}{\log^r2}\int_{\left\{\frac{\psi}{\Ent_{n\comma r}(\psi)}\geqslant\mathrm{e}-1\right\}} \frac{\psi}{\Ent_{n\comma r}(\psi)}\log^n\left(1+\frac{\psi}{\Ent_{n\comma r}(\psi)}\right) \\
&\phantom{\leqslant(\mathrm{e}-1)V_{\bm g}+\frac{1}{\log^r2}\int_{\left\{\frac{\psi}{\Ent_{n\comma r}(\psi)}\geqslant\mathrm{e}-1\right\}}} \cdot\log^r\biggl(1+\log\left(1+\frac{\psi}{\Ent_{n\comma r}(\psi)}\right)\biggr)\bm\omega_{\bm g}^n \\
&\leqslant \frac{1}{\log^r2}+(\mathrm{e}-1)V_{\bm g}
\end{aligned} \end{eq}
by \eqref{eq: to be used in the proof of the inequality of Holder-Young type} and therefore
\begin{eq} \label{eq: Ent_n0 controlled by Ent_nr}
\Ent_{n\comma 0}(\psi)\leqslant\max\left\{1\comma\frac{1}{\log^r2}+(\mathrm{e}-1)V_{\bm g}\right\}\Ent_{n\comma r}(\psi)
\end{eq}
by \propref{prop: estimates of norms}.

\begin{thm}
Fix $r>n$. Then we have
\begin{eq} \label{eq: L^infty estimate}
\osc\limits_{\mathcal M} u\leqslant C\comma
\end{eq}
where $C$ is a positive constant depending only on $\mathcal{M}$, $\bm{\omega}_{\bm g}$, $n$, $r$, $\frac{1}{\delta}$, $\Lambda$, $M$ and $\Ent_{n\comma r}\left(\frac{\varphi^n}{V_{\bm\chi}}\right)$.
\end{thm}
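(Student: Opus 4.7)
My plan is to combine the comparison-with-Monge-Amp\`ere technique from the proof of \propref{prop: energy estimate} with the H\"older-Young inequality \thmref{thm: an inequality of Holder-Young type} and the De Giorgi type iteration lemma \thmref{thm: an iteration lemma of De Giorgi type}. I would set up the quantities $\tilde u_s=\sup\limits_{\mathcal M}u-u-s$, $\Omega_s=\{\tilde u_s>0\}$, $\ell(s)=\bm\omega_{\bm g}^n(\Omega_s)$, $h(s)=\int_{\Omega_s}\varphi^n/V_{\bm\chi}\,\bm\omega_{\bm g}^n$, and $A_s=\int_{\Omega_s}\tilde u_s^a\varphi^n/V_{\bm\chi}\,\bm\omega_{\bm g}^n$, and repeat the construction of the auxiliary Monge-Amp\`ere solution $v_s$ together with the comparison function $-\nu(-v_s+B)^b+\tilde u_s$ from the proof of \propref{prop: energy estimate}. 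This yields the pointwise bound $\tilde u_s\leqslant C_1A_s^{1/(n+a)}\bigl(-v_s+C_2 A_s^{1/a}\bigr)^{n/(n+a)}$ on $\mathcal M$, with the parameter $a>0$ to be chosen at the end.

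For $t>s$, using $\tilde u_s\geqslant t-s$ on $\Omega_t$, the pointwise bound gives (once $t-s$ exceeds a threshold) $-v_s\geqslant c(t-s)^{(n+a)/n}/A_s^{1/n}$ on $\Omega_t$. Tian's $\alpha$-invariant estimate $\int_{\mathcal M}\mathrm{e}^{-\alpha v_s}\bm\omega_{\bm g}^n\leqslant C_3$ then produces the exponential measure decay $\ell(t)\leqslant C_3\exp\bigl({-}\alpha(t-s)^{(n+a)/n}/(2C_1A_s^{1/n})\bigr)$. Applying \thmref{thm: an inequality of Holder-Young type} with $p=1$, $q_0=n$, $r_0=r$ to the function $\varphi^n/V_{\bm\chi}$ on $\Omega_t$ (equipped with $\bm\omega_{\bm g}^n$) converts this measure decay into
\[
h(t)\leqslant\frac{C_4\,\Ent_{n\comma r}(\varphi^n/V_{\bm\chi})\cdot A_s}{(t-s)^{n+a}\log^r\bigl(1+(t-s)\bigr)},
\]
where the $\log^n(1+1/\ell(t))$ factor in the denominator absorbs the power $(t-s)^{n+a}$ (because $\ell(t)$ decays like $\mathrm{e}^{-c(t-s)^{(n+a)/n}}$) and the outer $\log^r$ produces the $\log^r$ of $t-s$ displayed. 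Separately, I would upgrade the Young inequality used in the proof of \propref{prop: energy estimate} (by taking the Young pair complementary to $t\log^n(1+t)\log^r(1+\log(1+t))$, combined with the $\alpha$-invariant) so that $\int(-v_s)^n\varphi^n/V_{\bm\chi}\,\bm\omega_{\bm g}^n\leqslant C(1+\Ent_{n\comma r}(\varphi^n/V_{\bm\chi}))$ holds directly; running the argument that produced \eqref{eq: A_s, estimate} at $q=n$ with this sharper input then gives $A_s\leqslant C\,\Ent_{n\comma r}(\varphi^n/V_{\bm\chi})\,h(s)$ whenever $h(s)$ lies below a threshold depending only on the data.

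Combining the two displays, with a starting point $s_0$ chosen so that $h(s_0)$ is below the threshold (which by \propref{prop: energy estimate} can be taken bounded by the data), and converting $\log^r(1+(t-s))$ into $\log^r(1+1/h(s))$ via the polynomial decay of $h$ inherited from the preceding display, I would obtain the iteration inequality
\[
h(t)\leqslant\frac{C_5\,h(s)}{(t-s)^{n+a}\log^r\bigl(1+1/h(s)\bigr)}\quad(t>s\geqslant s_0).
\]
Choosing $a\in(0\comma r-n)$ (nonempty since $r>n$) gives the exceedance $\beta=r>n+a=\alpha$, so \thmref{thm: an iteration lemma of De Giorgi type} applies and yields $h(s_0+T_\gamma)=0$ for some finite $T_\gamma$ depending only on the data and $\Ent_{n\comma r}(\varphi^n/V_{\bm\chi})$. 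Since $\varphi>0$, vanishing of $h$ forces $\Omega_{s_0+T_\gamma}=\varnothing$, so $\osc\limits_{\mathcal M}u\leqslant s_0+T_\gamma\leqslant C$. The most delicate point is the third step: both the upgrade of the Young inequality to use $\Ent_{n\comma r}$ rather than $\Ent_{n\comma 0}$, and the conversion of $\log^r(1+(t-s))$ into $\log^r(1+1/h(s))$, require careful bookkeeping; the strict hypothesis $r>n$ is indispensable precisely for securing the exceedance $\beta>\alpha$ in the final application of \thmref{thm: an iteration lemma of De Giorgi type}.
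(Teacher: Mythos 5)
Your overall architecture --- auxiliary Monge--Amp\`ere equation, comparison function, the H\"older--Young inequality of \thmref{thm: an inequality of Holder-Young type}, the energy estimate, and the iteration lemma \thmref{thm: an iteration lemma of De Giorgi type} --- matches the paper's, and the comparison inequality and the exponential integrability of $-v_s$ via the $\alpha$-invariant are set up correctly. The gap is in the final combination. What your two displays actually yield (and only for $t-s$ above a threshold, whereas \thmref{thm: an iteration lemma of De Giorgi type} is invoked with arbitrarily small steps) is
\begin{eq*}
h(t)\leqslant\frac{C\,h(s)}{(t-s)^{n+a}\log^r\bigl(1+(t-s)\bigr)}\comma
\end{eq*}
and this is genuinely weaker than the hypothesis of the iteration lemma: the logarithmic gain sits in the step size $t-s$, not in the smallness of $h(s)$. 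The proposed ``conversion'' of $\log^r\bigl(1+(t-s)\bigr)$ into $\log^r\bigl(1+1/h(s)\bigr)$ goes the wrong way: polynomial decay of $h$ gives a \emph{lower} bound on $\log\bigl(1+1/h(s)\bigr)$, whereas you would need to bound it \emph{above} by $C\log\bigl(1+(t-s)\bigr)$, which fails outright when $t$ is close to $s$ (the left side stays bounded away from $0$ while the right side tends to $0$). Worse, no inequality of the displayed form can force finite-time vanishing: $h(s)=\mathrm{e}^{-s}$ satisfies it for all $t>s$ with a uniform constant, since $\sup_{u>0}u^{n+a}\log^r(1+u)\,\mathrm{e}^{-u}<{+}\infty$, yet is everywhere positive. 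So the argument cannot close as written.

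The repair is to deploy \thmref{thm: an inequality of Holder-Young type} on the other factor. Rather than applying it with $p=1$ to $\varphi^n/V_{\bm\chi}$ against $\bm\omega_{\bm g}^n$ on $\Omega_t$, apply it with $p=n+1$, $q=r$ to $\tilde{u}_s$ against the measure $\frac{\varphi^n}{V_{\bm\chi}}\bm\omega_{\bm g}^n$ on $\Omega_s$, whose total mass is $h(s)$. The required input $\bigl\|\tilde{u}_s\bigr\|_{\mathrm{L}^{n+1}(\log\mathrm{L})^r\left(\Omega_s\comma\frac{\varphi^n}{V_{\bm\chi}}\bm\omega_{\bm g}^n\right)}\leqslant CA_s^{\frac{1}{n+1}}$ is obtained from the key estimate and the $\alpha$-invariant by a classical Young pairing of $\eta(t)=t^n\log^r\bigl(1+t^{n/(n+1)}\bigr)$ (evaluated at a normalized power of $\tilde{u}_s$) with the normalized $\varphi^n/V_{\bm\chi}$, the constant being controlled through \propref{prop: energy estimate} and \eqref{eq: Ent_n0 controlled by Ent_nr}. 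This yields $A_s\leqslant C\,h(s)\log^{-r/n}\bigl(1+1/h(s)\bigr)$ --- a gain in $h(s)$ itself --- and together with the trivial $A_s\geqslant(t-s)h(t)$ it gives exactly the hypothesis of \thmref{thm: an iteration lemma of De Giorgi type} with $\alpha=1$ and $\beta=\frac rn>1$, valid for all $t>s\geqslant0$ with no threshold. This is the paper's route (taken with $a=1$; the general exponent $a$ is only needed in the energy estimate, and no upgraded Young inequality is required).
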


\begin{proof}
The first half of the proof follows the strategy of \cite{Guo2023} and is similar to that of \propref{prop: energy estimate}. For $l\in\mathbb{N}$, let $\tau_l$ be a smooth non-decreasing function defined on $\mathbb{R}$ satisfying \eqref{eq: tau_l}. We may as well assume that $\osc\limits_{\mathcal M} u>0$. Fix $s\in\left[0\comma\osc\limits_{\mathcal M} u\right)$ and define $\tilde{u}_s$ as that in \eqref{eq: tilde u_s}. By the celebrated theorem of \citeauthor{Yau1978} \cite{Yau1978}, we can let $v_{s\comma l}$ be the unique smooth solution to the auxiliary complex Monge-Amp\`ere equation
\begin{eq} \label{eq: auxiliary complex Monge-Ampere, L^infty estimate}
\left\{ \begin{gathered}
\det\Bigl(\bm\omega_{\bm g}^{-1}\left(\bm\chi+\mathrm{i}\partial\overline\partial v_{s\comma l}\right)\Bigr)=\frac{\tau_l(\tilde{u}_s)}{A_{s\comma l}}\varphi^n\comma \\
\bm\chi+\mathrm{i}\partial\overline\partial v_{s\comma l}>\bm{0}\comma\sup_{\mathcal M} v_{s\comma l}=0.
\end{gathered} \right.
\end{eq}
on $(\mathcal M\comma\bm\omega_{\bm g})$, where the normalized constant $A_{s\comma l}$ is defined as
\begin{eq}
A_{s\comma l}\triangleq\int_{\mathcal M} \tau_l(\tilde{u}_s)\frac{\varphi^n}{V_{\bm\chi}}\bm\omega_{\bm g}^n
\end{eq}
to make \eqref{eq: auxiliary complex Monge-Ampere, L^infty estimate} compatible. Let $\Omega_s$ denote the set \eqref{eq: Omega_s, energy estimate}. Note that
\begin{eq}
\lim_{l\to\infty}A_{s\comma l}=A_s\triangleq\int_{\Omega_s} \tilde{u}_s\frac{\varphi^n}{V_{\bm\chi}}\bm\omega_{\bm g}^n>0. 
\end{eq}
By arguments almost the same as those in the proof of \propref{prop: energy estimate}, we obtain the following comparison inequality
\begin{eq}
\tilde{u}_s\leqslant C_1A_{s\comma l}^{\frac{1}{n+1}}\left(-v_{s\comma l}+C_2A_{s\comma l}\right)^{\frac{n}{n+1}}
\end{eq}
on $\mathcal M$ similar to \eqref{eq: comparison inequality}. Thus, on $\Omega_s$ we have the following key estimate
\begin{eq}
C_1^{-\frac{n+1}{n}}A_{s\comma l}^{-\frac 1n}\tilde{u}_s^{\frac{n+1}{n}}\leqslant -v_{s\comma l}+C_2A_{s\comma l}.
\end{eq}
Recalling \eqref{eq: alpha-invariant}, we see that
\begin{eq}
\int_{\Omega_s} \mathrm{e}^{\alpha C_1^{-\frac{n+1}{n}}A_{s\comma l}^{-\frac 1n}\tilde{u}_s^{\frac{n+1}{n}}}\bm\omega_{\bm g}^n\leqslant C_3\mathrm{e}^{\alpha C_2A_{s\comma l}}.
\end{eq}
Note that the above inequality holds for any $l\in\mathbb{N}$. Letting $l\to\infty$, we obtain
\begin{eq} \label{eq: key inequality in L^infty estimates}
\int_{\Omega_s} \mathrm{e}^{\alpha C_1^{-\frac{n+1}{n}}A_s^{-\frac 1n}\tilde{u}_s^{\frac{n+1}{n}}}\bm\omega_{\bm g}^n\leqslant C_3\mathrm{e}^{\alpha C_2A_s}.
\end{eq}
Define
\begin{eq}
\eta(t)\triangleq t^n\log^r\left(1+t^{\frac{n}{n+1}}\right) (t\geqslant0).
\end{eq}
Let $\eta^{-1}$ denote the inverse function of $\eta$. By the classical Young inequality, for any $x\comma y\geqslant0$ there holds
\begin{eq} \begin{aligned}
\eta(x)y&\leqslant\int_0^{\eta(x)} \left(\mathrm{e}^{\eta^{-1}(t)}-1\right)\dif t+\int_0^y \eta\bigl(\log(1+t)\bigr)\dif t \\
&\leqslant\eta(x)(\mathrm{e}^x-1)+y\eta\bigl(\log(1+y)\bigr) \\
&\leqslant C_4\mathrm{e}^{2x}+y\log^n(1+y)\log^r\left(1+\log^{\frac{n}{n+1}}(1+y)\right).
\end{aligned} \end{eq}
Letting 
\begin{eq}
x=\left(\frac{\tilde{u}_s(\bm{z})}{\left(\frac{2}{\alpha}\right)^{\frac{n}{n+1}}C_1A_s^{\frac{1}{n+1}}}\right)^{\frac{n+1}{n}}\comma\quad y=\frac{\frac{\bigl(\varphi(\bm{z})\bigr)^{n}}{V_{\bm\chi}}}{\Ent_{n\comma r}\left(\frac{\varphi^{n}}{V_{\bm\chi}}\right)}\comma
\end{eq}
and integrating both sides of the acquired inequality over $\Omega_s$, we obtain
\begin{eq} \begin{aligned}
&\mathrel{\phantom{=}}\int_{\Omega_s} \left(\frac{\tilde{u}_s}{\left(\frac{2}{\alpha}\right)^{\frac{n}{n+1}}C_1A_s^{\frac{1}{n+1}}}\right)^{n+1}\log^r\left(1+\frac{\tilde{u}_s}{\left(\frac{2}{\alpha}\right)^{\frac{n}{n+1}}C_1A_s^{\frac{1}{n+1}}}\right) \frac{\varphi^{n}}{V_{\bm\chi}}\bm\omega_{\bm g}^n \\
&\leqslant \biggl(C_4C_3\mathrm{e}^{\alpha C_2A_s}+\int_{\Omega_s\cap\{y>\mathrm{e}-1\}} y\log^n(1+y)\log^r\bigl(1+\log(1+y)\bigr)\bm\omega_{\bm g}^n \\
&\phantom{\leqslant \biggl(}+(\mathrm{e}-1)\log^r2\cdot V_{\bm g}\biggr)\Ent_{n\comma r}\left(\frac{\varphi^{n}}{V_{\bm\chi}}\right) \\
&\leqslant \left(C_4C_3\mathrm{e}^{\alpha C_2A_s}+C_5\right)\Ent_{n\comma r}\left(\frac{\varphi^{n}}{V_{\bm\chi}}\right)
\end{aligned} \end{eq}
by \eqref{eq: key inequality in L^infty estimates} and \eqref{eq: to be used in the proof of the inequality of Holder-Young type}. By \propref{prop: energy estimate} and \eqref{eq: Ent_n0 controlled by Ent_nr} we note that
\begin{eq} \begin{aligned}
A_s&=\int_{\Omega_s} \tilde{u}_s\frac{\varphi^n}{V_{\bm\chi}}\bm\omega_{\bm g}^n \\
&\leqslant\int_{\mathcal M} \left(\sup_{\mathcal M}u-u\right)\frac{\varphi^n}{V_{\bm\chi}}\bm\omega_{\bm g}^n \\
&\leqslant C_6\left(\mathrm{e}^{C_6\left(\Ent_{n\comma 0}\left(\frac{\varphi^n}{V_{\bm\chi}}\right)\right)^{\frac 1n}}-1\right)\Biggl(1+\Ent_{n\comma 0}\left(\frac{\varphi^n}{V_{\bm\chi}}\right)\Biggr) \\
&\leqslant C_6\left(\mathrm{e}^{C_6\left(C_7\Ent_{n\comma r}\left(\frac{\varphi^n}{V_{\bm\chi}}\right)\right)^{\frac 1n}}-1\right)\Biggl(1+C_7\Ent_{n\comma r}\left(\frac{\varphi^n}{V_{\bm\chi}}\right)\Biggr).
\end{aligned} \end{eq}
Thus, we have
\begin{eq}
\int_{\Omega_s} \left(\frac{\tilde{u}_s}{\left(\frac{2}{\alpha}\right)^{\frac{n}{n+1}}C_1A_s^{\frac{1}{n+1}}}\right)^{n+1}\log^r\left(1+\frac{\tilde{u}_s}{\left(\frac{2}{\alpha}\right)^{\frac{n}{n+1}}C_1A_s^{\frac{1}{n+1}}}\right) \frac{\varphi^{n}}{V_{\bm\chi}}\bm\omega_{\bm g}^n\leqslant C_8
\end{eq}
for a positive constant $C_8$ depending on $\Ent_{n\comma r}\left(\frac{\varphi^{n}}{V_{\bm\chi}}\right)$ in addition to other trivial quantities. By \propref{prop: estimates of norms} the inequality above indicates that
\begin{eq}
\left\|\tilde{u}_s\right\|_{\mathrm{L}^{n+1}(\log\mathrm{L})^r\left(\Omega_s\comma\frac{\varphi^n}{V_{\bm\chi}}\bm\omega_{\bm g}^n\right)}\leqslant C_8^{\frac{1}{n+1}}\left(\frac{2}{\alpha}\right)^{\frac{n}{n+1}}C_1A_s^{\frac{1}{n+1}}.
\end{eq}
Let $h(s)$ denote
\begin{eq}
\int_{\Omega_s} \frac{\varphi^n}{V_{\bm\chi}}\bm\omega_{\bm g}^n.
\end{eq}
Then by \thmref{thm: an inequality of Holder-Young type} we have
\begin{eq} \begin{aligned}
A_s&=\int_{\Omega_s} \tilde{u}_s\frac{\varphi^n}{V_{\bm\chi}}\bm\omega_{\bm g}^n \\
&\leqslant\frac{2\left(n+1+\frac{r}{2}\right)^{\frac{r}{n+1}}\left\|\tilde{u}_s\right\|_{\mathrm{L}^{n+1}(\log\mathrm{L})^r\left(\Omega_s\comma\frac{\varphi^n}{V_{\bm\chi}}\bm\omega_{\bm g}^n\right)}\bigl(h(s)\bigr)^{\frac{n}{n+1}}}{\log^{\frac{r}{n+1}}\left(1+\frac{1}{h(s)}\right)} \\
&\leqslant \frac{2\left(n+1+\frac{r}{2}\right)^{\frac{r}{n+1}}C_8^{\frac{1}{n+1}}\left(\frac{2}{\alpha}\right)^{\frac{n}{n+1}}C_1A_s^{\frac{1}{n+1}}\bigl(h(s)\bigr)^{\frac{n}{n+1}}}{\log^{\frac{r}{n+1}}\left(1+\frac{1}{h(s)}\right)}\comma
\end{aligned} \end{eq}
and therefore
\begin{eq}
A_s\leqslant \frac{C_9h(s)}{\log^{\frac{r}{n}}\left(1+\frac{1}{h(s)}\right)}
\end{eq}
for any $s\in\left[0\comma\osc\limits_{\mathcal M}u\right)$, where $C_9$ is a positive constant depending on $\Ent_{n\comma r}\left(\frac{\varphi^{n}}{V_{\bm\chi}}\right)$ and other trivial quantities. Note that the above inequality holds for any $s\geqslant0$ in fact. On the other hand, for any $t>s$ there holds
\begin{eq}
A_s\geqslant\int_{\Omega_t} \left(\sup_{\mathcal M}u-u-s\right)\frac{\varphi^n}{V_{\bm\chi}}\bm\omega_{\bm g}^n\geqslant(t-s)h(t).
\end{eq}
As a result, we have
\begin{eq}
h(t)\leqslant\frac{C_9}{t-s}\frac{h(s)}{\log^{\frac{r}{n}}\left(1+\frac{1}{h(s)}\right)}
\end{eq}
for any $t\comma s\in\mathbb{R}\colon t>s\geqslant0$. Since $r>n$, this indicates by \thmref{thm: an iteration lemma of De Giorgi type} that $h\left(T_{\frac{n+r}{2n}}\right)=0$, i.e.
\begin{eq}
\sup_{\mathcal M}u-u\leqslant T_{\frac{n+r}{2n}}
\end{eq}
on $\mathcal M$, where
\begin{eq}
T_{\frac{n+r}{2n}}\triangleq\frac{C_9\mathrm{e}\left(\frac{2}{\log2}\right)^{\frac{n+r}{2n}}}{\frac{r-n}{2n}} \max\left\{\frac{1}{\log^{\frac{r-n}{2n}}\left(1+\frac{1}{h(0)}\right)}\comma\frac{\log^{\frac{n+r}{2n}} 2}{\log^{\frac{r}{n}}\left(1+\frac{1}{h(0)}\right)}\right\}.
\end{eq}
Thus, we have $\osc\limits_{\mathcal M} u\leqslant T_{\frac{n+r}{2n}}$. Finally, by \thmref{thm: an inequality of Holder-Young type} it's easy to see that
\begin{eq}
h(0)\leqslant\int_{\mathcal M} \frac{\varphi^n}{V_{\bm\chi}}\bm\omega_{\bm g}^n\leqslant\frac{2\left(1+\frac n2+\frac r4\right)^{n+r}\Ent_{n\comma r}\left(\frac{\varphi^n}{V_{\bm\chi}}\right)}{\log^n\left(1+\frac{1}{V_{\bm g}}\right)\log^r\biggl(1+\log\left(1+\frac{1}{V_{\bm g}}\right)\biggr)}\comma
\end{eq}
and therefore we have proved \eqref{eq: L^infty estimate}.
\end{proof}


\section[L∞ estimates on Hermitian manifolds]{$\mathrm{L}^\infty$ estimates on Hermitian manifolds} \label{sec: L^infty estimates on Hermitian manifolds}

In this section, we prove \thmref{thm: L^infty estimates on Hermitian manifolds}. Let $(\mathcal{M}\comma\bm{\omega}_{\bm g})$ be a compact Hermitian manifold of dimension $n$ (note that $\bm\omega_{\bm g}$ is not necessarily closed), $\bm\chi$ be a positive $(1\comma 1)$-form on $\mathcal M$ satisfying
\begin{eq} \label{eq: chi}
\varepsilon\bm{\omega}_{\bm g}\leqslant\bm\chi\leqslant M\bm{\omega}_{\bm g}
\end{eq}
for positive constants $\varepsilon$ and $M$, and $\varphi$ be a positive smooth function on $\mathcal M$. Assume that $u\in\mathrm{C}^2(\mathcal M)$ satisfies
\begin{eq}
\left\{ \begin{gathered}
f\Bigl(\bm\lambda\bigl(\bm{\omega}_{\bm g}^{-1}(\bm{\chi}+\mathrm{i}\partial\overline\partial u)\bigr)\Bigr)\leqslant\varphi\comma \\
\bm\lambda\bigl(\bm{\omega}_{\bm g}^{-1}(\bm{\chi}+\mathrm{i}\partial\overline\partial u)\bigr)\in\Gamma\comma
\end{gathered} \right.
\end{eq}
where $\Gamma$ is a symmetric open convex cone in $\mathbb{R}^n$ satisfying $\Gamma_n\subset\Gamma\subset\Gamma_1$ and $f$ is a positive $\mathrm{C}^1$ symmetric function defined in $\Gamma$ satisfying the conditions \eqref{item: elliptic}, \eqref{item: structural} and \eqref{eq: Euler inequality} on page \pageref{item: elliptic}. Let $V_{\bm g}$ and $\Ent_{q\comma r}(\varphi^n)$ denote $\int_{\mathcal M} \bm\omega_{\bm g}^n$ and
\begin{eq}
\left\|\varphi^n\right\|_{\mathrm{L}^1(\log\mathrm{L})^q(\log\log\mathrm{L})^r(\mathcal M\comma\bm{\omega}_{\bm g}^n)}
\end{eq}
respectively (see \definitionref{definition: L^p(logL)^q(loglogL)^r}). The following lemma in \cite{Wang2020} will be used as an analogue of \eqref{eq: alpha-invariant} in the Hermitian case.

\begin{lem} \label{lem: exponential integral estimate}
Let $\Omega$ be a bounded, smooth, pseudo-convex domain in $\mathbb{C}^n$ and $v\in\mathrm{C}^\infty\left(\overline\Omega\right)$ be a plurisubharmonic function such that $\left.v\right|_{\partial\Omega}=0$ and
\begin{eq}
\int_\Omega \det\left(\mathrm{D}^2_{\mathbb{C}}v\right)\dif V_{\mathbb{C}^n}\leqslant 1\comma
\end{eq}
where $\mathrm{D}^2_{\mathbb{C}}v$ denotes the complex Hessian matrix of $v$ and $\dif V_{\mathbb{C}^n}$ denotes the standard volume form in $\mathbb{C}^n$. Then we have
\begin{eq} \label{eq: exponential integral estimate}
\int_\Omega \mathrm{e}^{-\alpha v}\dif V_{\mathbb{C}^n}\leqslant C
\end{eq}
for positive constants $\alpha$ and $C$ depending only on $n$ and the diameter of $\Omega$.
\end{lem}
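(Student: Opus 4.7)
The plan is to control the sublevel sets of $v$ exponentially and then deduce the integral estimate via the layer-cake formula. Since $v$ is plurisubharmonic on $\Omega$ with boundary values $0$, the maximum principle yields $v\leqslant 0$ on $\overline\Omega$, so
\begin{eq*}
\int_\Omega\mathrm{e}^{-\alpha v}\dif V_{\mathbb{C}^n}=\mathrm{Vol}(\Omega)+\alpha\int_0^\infty\mathrm{e}^{\alpha t}\mathrm{Vol}\bigl(\{v<-t\}\bigr)\dif t\comma
\end{eq*}
and it suffices to produce an exponential sublevel set bound $\mathrm{Vol}\bigl(\{v<-t\}\bigr)\leqslant C_1\mathrm{e}^{-\beta t}$ with $\beta>0$ depending only on $n$ and $\mathrm{diam}(\Omega)$, and then take any $\alpha\in(0\comma\beta)$.

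To produce such decay I intend to route the argument through the Bedford-Taylor relative capacity
\begin{eq*}
\mathrm{cap}(E\comma\Omega)\triangleq\sup\left\{\int_E(dd^cw)^n\middle|w\in\mathrm{PSH}(\Omega)\comma-1\leqslant w\leqslant 0\right\}.
\end{eq*}
Testing this with the competitor $w_t\triangleq\max(v/t\comma-1)$, which is plurisubharmonic and lies in $[-1\comma0]$, and exploiting $(dd^cw_t)^n=t^{-n}(dd^cv)^n$ on $\{v<-t\}$, should give the polynomial capacity bound $\mathrm{cap}\bigl(\{v<-t\}\comma\Omega\bigr)\leqslant C_n/t^n$ after absorbing the dimensional constant relating $\det(\mathrm{D}^2_{\mathbb{C}}v)\dif V_{\mathbb{C}^n}$ to $(dd^cv)^n$ and invoking the normalization $\int_\Omega\det(\mathrm{D}^2_{\mathbb{C}}v)\dif V_{\mathbb{C}^n}\leqslant 1$.

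I then plan to combine this with the classical capacity-volume comparison of Alexander-Taylor and Ko{\l}odziej,
\begin{eq*}
\mathrm{cap}(E\comma\Omega)\geqslant\frac{c}{\Bigl(\log\bigl(\mathrm{Vol}(\Omega)/\mathrm{Vol}(E)\bigr)\Bigr)^n}\comma
\end{eq*}
valid for $E\subset\Omega$ of sufficiently small Lebesgue volume with $c>0$ depending only on $n$ and $\mathrm{diam}(\Omega)$. Substituting the capacity bound of the previous step and solving for $\mathrm{Vol}\bigl(\{v<-t\}\bigr)$ produces the advertised exponential decay with an explicit $\beta\propto c^{1/n}/C_n^{1/n}$; feeding the result back into the layer-cake identity above then delivers the desired bound on $\int_\Omega\mathrm{e}^{-\alpha v}\dif V_{\mathbb{C}^n}$ for every $\alpha<\beta$.

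The main obstacle is the capacity-volume inequality itself, whose proof (via the relative extremal function of $E$ and Siciak-style logarithmic test functions) is a substantial ingredient of pluripotential theory, most cleanly quoted from the literature as the paper does via \cite{Wang2020}. A PDE-flavoured alternative closer in spirit to the rest of the paper would be to iterate the complex Monge-Amp\`ere comparison principle on nested sublevel sets of $v$, reformulate the resulting recursive volume inequality as a hypothesis of the De Giorgi-type iteration lemma \thmref{thm: an iteration lemma of De Giorgi type}, and extract the exponential decay from there; the delicate point of this route is producing a genuine logarithmic gain of the form $\log^{-\beta}\bigl(1+f(t)^{-1}\bigr)$ with $\beta$ strictly larger than the power of $(s-t)^{-1}$ appearing at each iteration step, which is what the iteration lemma requires in order to conclude decay rather than merely a polynomial estimate.
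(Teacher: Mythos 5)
The paper's own ``proof'' is only a citation to \cite{Wang2020}, where the estimate is deduced from a sharp Moser--Trudinger inequality $\int_\Omega\exp\bigl(c_n(-v)^{\frac{n+1}{n}}\bigr)\dif V_{\mathbb{C}^n}\leqslant C$ proved by PDE methods; your route through Bedford--Taylor capacity, the polynomial decay of $\mathrm{cap}\bigl(\{v<-t\}\comma\Omega\bigr)$, and the Alexander--Taylor/Ko{\l}odziej capacity--volume comparison is the classical pluripotential-theoretic alternative. Its architecture is sound: the maximum principle gives $v\leqslant0$, the layer-cake identity is correct, the capacity--volume inequality does hold with constants depending only on $n$ and $\mathrm{diam}(\Omega)$, and combining it with $\mathrm{cap}\leqslant C_nt^{-n}$ yields $\mathrm{Vol}\bigl(\{v<-t\}\bigr)\leqslant C_1\mathrm{e}^{-\beta t}$ and hence the claim for $\alpha<\beta$. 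What you trade for avoiding \cite{Wang2020} is the need to import the Alexander--Taylor inequality, which you correctly identify as the heavy external ingredient.

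There is, however, a genuine flaw in your justification of the capacity decay. First, $w_t=\max(v/t\comma-1)$ equals the constant $-1$ on the open set $\{v<-t\}$, so $(dd^cw_t)^n$ vanishes there; the identity $(dd^cw_t)^n=t^{-n}(dd^cv)^n$ holds on $\{v>-t\}$, not where you assert it. Second, and more importantly, evaluating the defining supremum at one competitor can only bound $\mathrm{cap}(E\comma\Omega)$ from \emph{below}, whereas you need an upper bound. The correct standard argument takes an \emph{arbitrary} competitor $w\in\mathrm{PSH}(\Omega)$ with $-1\leqslant w\leqslant0$, notes the inclusions $\{v<-2t\}\subset\{v<tw-t\}\subset\{v<-t\}\subset\subset\Omega$, and applies the comparison principle on $\{v<tw-t\}$ to get
\begin{eq*}
t^n\int_{\{v<-2t\}}(dd^cw)^n\leqslant\int_{\{v<tw-t\}}\bigl(dd^c(tw-t)\bigr)^n\leqslant\int_{\{v<tw-t\}}(dd^cv)^n\leqslant\int_\Omega(dd^cv)^n\comma
\end{eq*}
whence $\mathrm{cap}\bigl(\{v<-2t\}\comma\Omega\bigr)\leqslant C_nt^{-n}$ after taking the supremum over $w$. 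With this repair (and the harmless rescaling $t\mapsto t/2$) the rest of your argument goes through.
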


\begin{proof}
See \cite[p.~13]{Wang2020}.
\end{proof}

Now we prove the sharp $\mathrm{L}^\infty$ estimate in \thmref{thm: L^infty estimates on Hermitian manifolds}.

\begin{thm}
Fix $r>n$. Then we have
\begin{eq} \label{eq: L^infty estimate, Hermitian}
\osc\limits_{\mathcal M} u\leqslant C\comma
\end{eq}
where $C$ is a positive constant depending only on $\mathcal{M}$, $\bm{\omega}_{\bm g}$, $n$, $r$, $\frac{1}{\delta}$, $\Lambda$, $\frac{1}{\varepsilon}$, $M$ and $\Ent_{n\comma r}(\varphi^n)$.
\end{thm}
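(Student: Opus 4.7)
The plan is to adapt the proof of \thmref{thm: L^infty estimates on Kahler manifolds} to the Hermitian setting, with two structural modifications forced by the lack of closedness of $\bm\omega_{\bm g}$. First, the global auxiliary Monge-Ampère equation \eqref{eq: auxiliary complex Monge-Ampere, L^infty estimate} has no canonical normalization when $\bm\omega_{\bm g}$ is not closed, so it is replaced by a Dirichlet problem for the complex Monge-Ampère operator on a small Euclidean ball $B\subset\mathbb{C}^n$ obtained from a holomorphic chart centered at a minimum point of $u$. Second, the alpha-invariant bound \eqref{eq: alpha-invariant} is replaced by Wang's integral estimate, \lemref{lem: exponential integral estimate}.

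Concretely, for $s\in\left[0\comma\osc\limits_{\mathcal M} u\right)$, set $\tilde u_s=\sup_{\mathcal M}u-u-s$ and $\Omega_s=\{\tilde u_s>0\}$. Let $B$ be a ball of fixed radius in a holomorphic chart containing a minimum point of $u$. With $\tau_l$ as in \eqref{eq: tau_l}, let $v_{s\comma l}\in\mathrm{C}^\infty(\overline B)$ be the plurisubharmonic solution to
\[
\det(\mathrm{D}^2_{\mathbb{C}}v_{s\comma l})=\frac{\tau_l(\tilde u_s)\varphi^n}{A_{s\comma l}}\comma\qquad v_{s\comma l}|_{\partial B}=0\comma
\]
with $A_{s\comma l}$ chosen so that the $\mathrm{L}^1$ mass of the right-hand side matches the hypothesis of \lemref{lem: exponential integral estimate}; existence and regularity follow from Caffarelli-Kohn-Nirenberg-Spruck. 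Introduce on $B$ the comparison function $w=-\nu(-v_{s\comma l}+B_0)^b+\tilde u_s$, choose $B_0$ large enough that $w\leqslant 0$ on $\partial B$ (using $v_{s\comma l}|_{\partial B}=0$ and $\tilde u_s\leqslant\osc\limits_{\mathcal M}u$ there), and apply $F^{j\overline k}_{\bm\chi+\mathrm{i}\partial\overline\partial u}$ to $\mathrm{D}_{j\overline k}w$ at an interior maximum. The extra bad term $-(1-\nu b B_0^{b-1})F^{j\overline k}\chi_{j\overline k}$ that now appears --- because $B_0$ is dictated by the boundary condition rather than by the exponent-balancing choice \eqref{eq: B} used in \propref{prop: energy estimate} --- is absorbed using $\bm\chi\geqslant\varepsilon\bm\omega_{\bm g}$ together with the lower bound on $F^{j\overline k}g_{j\overline k}$ from \propref{prop: properties of F^jk}(4). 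Optimizing exponents yields the pointwise bound
\[
\tilde u_s\leqslant C_1 A_{s\comma l}^{1/(n+1)}\bigl(-v_{s\comma l}+C_2 A_{s\comma l}\bigr)^{n/(n+1)}\quad\text{on }B.
\]

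Letting $l\to\infty$ (with $A_{s\comma l}$ converging to an appropriate limit $A_s$) and applying \lemref{lem: exponential integral estimate} to $v_{s\comma l}$ produces an exponential integral bound for $\tilde u_s$ on $B$ analogous to \eqref{eq: key inequality in L^infty estimates}. The complementary integral over $\mathcal M\setminus B$ is controlled by crude pointwise estimates on $\tilde u_s$ combined with \propref{prop: L^1 estimate}, and the whole feeds into the same Young-Orlicz pairing as in \propref{prop: energy estimate} --- now with $\Ent_{n\comma r}(\varphi^n)$ in place of $\Ent_{n\comma 0}$ --- to bound $A_s$. After invoking \propref{prop: estimates of norms} and the Hölder-Young inequality \thmref{thm: an inequality of Holder-Young type}, one arrives at
\[
h(t)\leqslant\frac{C_9}{t-s}\cdot\frac{h(s)}{\log^{r/n}\bigl(1+1/h(s)\bigr)}\comma\qquad h(s)\triangleq\int_{\Omega_s}\varphi^n\bm\omega_{\bm g}^n\comma
\]
for all $t>s\geqslant 0$. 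Since $r>n$, \corref{cor: another version of the iteration lemma of De Giorgi type} applied with $\alpha=1$, $\beta=r/n$ and any $\gamma\in(1\comma r/n)$ forces $h$ to vanish at a finite time controlled by the listed data, which is exactly \eqref{eq: L^infty estimate, Hermitian}.

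The principal obstacle is the maximum principle step on $B$. One must simultaneously enforce $w\leqslant 0$ on $\partial B$, absorb the extra term $-(1-\nu b B_0^{b-1})F^{j\overline k}\chi_{j\overline k}$ that cannot be killed through the free parameters as in the Kähler case, and transfer the local exponential bound on $B$ to a global bound on $A_s$ via a careful splitting of $\mathcal M$ into $B$ and $\mathcal M\setminus B$. The first two difficulties are precisely what the non-degeneracy hypothesis $\bm\chi\geqslant\varepsilon\bm\omega_{\bm g}$ is designed to handle.
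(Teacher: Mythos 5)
Your overall architecture (local Dirichlet problem at the minimum, Wang's \lemref{lem: exponential integral estimate}, comparison function, Orlicz pairing, De Giorgi-type iteration) matches the paper's, but the proposal has a genuine gap in the localization, and this propagates into the endgame. You keep $\tilde u_s=\sup_{\mathcal M}u-u-s$ and $\Omega_s=\{\tilde u_s>0\}$ \emph{global} while the Dirichlet solution $v_{s\comma l}$ and the comparison function live only on the ball $B$. Two things then break. On $\partial B$ you have $v_{s\comma l}=0$ but $\tilde u_s$ can be as large as $\osc_{\mathcal M}u$, so enforcing $w\leqslant 0$ on $\partial B$ requires $\nu B_0^{b}\gtrsim\osc_{\mathcal M}u$ --- the constant you need is controlled by the very quantity you are trying to bound, which is circular. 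And the exponential/Orlicz estimates you derive only see $\Omega_s\cap B$; the integral $\int_{\Omega_s\setminus B}\tilde u_s\varphi^n\bm\omega_{\bm g}^n$ is not controlled by ``crude pointwise estimates'' plus \propref{prop: L^1 estimate}, because on $\mathcal M\setminus B$ the function $\tilde u_s$ is of size $\osc_{\mathcal M}u$ and the measure is $\varphi^n\bm\omega_{\bm g}^n$, not $\bm\omega_{\bm g}^n$. The paper resolves both problems at once by making $\tilde u_s$ itself local and anchored at the minimum: $\tilde u_s=-u+u(\bm z_0)+\frac{\varepsilon}{2}d_0^2-\frac{\varepsilon}{2}|\bm\rho(\bm z)-\bm\rho(\bm z_0)|^2-s$ on $\overline{\mathcal B_{d_0}(\bm z_0)}$. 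This forces $\tilde u_s\leqslant 0$ on the boundary unconditionally (no additive constant $B_0$ is needed at all), and the quadratic term contributes $-\frac{\varepsilon}{2}\delta_{jk}$ to $\mathrm{D}_{j\overline k}\tilde u_s$, which is exactly what $\bm\chi\geqslant\varepsilon\bm\omega_{\bm g}$ (together with $(g_{j\overline k})\geqslant\frac12\mathbf{I}_n$) absorbs so that \propref{prop: properties of F^jk}(3) becomes applicable. Your proposed mechanism --- an extra term $-(1-\nu bB_0^{b-1})F^{j\overline k}\chi_{j\overline k}$ killed by non-degeneracy --- is not the obstruction: that term is non-negative for $B_0$ large, and the real trouble is the boundary circularity just described.

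The endgame is also wrong as stated. Because the localized $\tilde u_s$ only makes sense for $s\in\left[0\comma\frac{\varepsilon}{2}d_0^2\right)$, and $h(s)>0$ on that whole range (the minimum point always lies in $\Omega_s$), one cannot conclude that $h$ vanishes at finite time; \corref{cor: another version of the iteration lemma of De Giorgi type} is the \emph{contrapositive} of \thmref{thm: an iteration lemma of De Giorgi type} and yields a positive \emph{lower} bound $h(0)\geqslant\bigl(\mathrm{e}^{L}-1\bigr)^{-1}$. The paper then needs one more step you have omitted: pairing $h(0)\log^n\bigl(1+\osc_{\mathcal M}u\bigr)\leqslant\int_{\Omega_0}\log^n\bigl(1+\sup_{\mathcal M}u-u+\frac{\varepsilon}{2}d_0^2\bigr)\varphi^n\bm\omega_{\bm g}^n$ with a Young-inequality argument and \propref{prop: L^1 estimate} to bound the right-hand side by a constant times $\Ent_{n\comma 0}(\varphi^n)$, whence the lower bound on $h(0)$ converts into an upper bound on $\osc_{\mathcal M}u$. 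Without this step the iteration alone proves nothing about the oscillation.
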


\begin{proof}
The first half of the proof follows the strategy of \cite{Guo2023b}. Since $(\mathcal M\comma\bm\omega_{\bm g})$ is a compact Hermitian manifold, it's well-known that there exists a positive constant $d_0<\frac 12$ depending only on $\mathcal M$ and $\bm\omega_{\bm g}$ such that for any point $\bm{z}\in\mathcal M$, one can choose a local holomorphic coordinate system $(U\comma\bm{\rho}\mbox{; }z^j)$ which contains $\bm{z}$ and satisfies
\begin{eq}
{\mathcal B}_{2d_0}(\bm z)\triangleq\bm{\rho}^{-1}\Bigl(\mathrm{B}_{2d_0}\bigl(\bm{\rho}(\bm z)\bigr)\Bigr)\subset U
\end{eq}
and
\begin{eq} \label{eq: minimal and maximal eigenvalue of g_jk}
\frac12\mathbf{I}_n\leqslant\bigl(g_{j\overline k}(\bm{w})\bigr)\leqslant2\mathbf{I}_n\comma\forall\bm{w}\in{\mathcal B}_{2d_0}(\bm z)\comma
\end{eq}
where $\mathrm{B}_{2d_0}\bigl(\bm{\rho}(\bm z)\bigr)$ denotes the Euclidean ball of radius $2d_0$ centered at $\bm{\rho}(\bm z)$. Henceforth we assume that $\min\limits_{\mathcal M}u=u(\bm{z}_0)$ for some point $\bm{z}_0\in\mathcal{M}$ and choose as above the local holomorphic coordinate system $(U\comma\bm{\rho}\mbox{; }z^j)$ containing $\bm{z}_0$. For $l\in\mathbb{N}$, let $\tau_l$ be a smooth non-decreasing function defined on $\mathbb{R}$ satisfying \eqref{eq: tau_l}. Fix $s\in\left[0\comma\frac{\varepsilon}{2}d_0^2\right)$ and define
\begin{eq}
\tilde{u}_s(\bm z)\triangleq -u(\bm z)+u({\bm z}_0)+\frac{\varepsilon}{2}d_0^2-\frac{\varepsilon}{2}\bigl|\bm{\rho}(\bm z)-\bm{\rho}({\bm z}_0)\bigr|^2-s\comma \bm z\in\overline{\mathcal{B}_{d_0}({\bm z}_0)}.
\end{eq}
Note that $\tilde{u}_s(\bm z_0)>0$ and
\begin{eq}
\tilde{u}_s(\bm z)\leqslant0\comma\forall\bm z\in\partial\mathcal{B}_{d_0}({\bm z}_0).
\end{eq}
Let $\Omega_s$ denote the set
\begin{eq}
\Bigl\{\bm{z}\in\mathcal{B}_{d_0}({\bm z}_0)\Bigm|\tilde{u}_s(\bm{z})>0\Bigr\}.
\end{eq}
By the celebrated theorem of \citeauthor{Caffarelli1985} \cite{Caffarelli1985}, we can let $v_{s\comma l}$ be the unique smooth solution to the Dirichlet problem of the complex Monge-Amp\`ere equation
\begin{eq}
\left\{ \begin{gathered}
\det\left(\mathrm{D}_{\mathbb{C}}^2 v_{s\comma l}\right)=\frac{\tau_l\left(\tilde{u}_s\circ\bm{\rho}^{-1}\right)}{A_{s\comma l}}\left(\varphi\circ\bm{\rho}^{-1}\right)^n\det\left(g_{j\overline k}\circ\bm{\rho}^{-1}\right) \mbox{ in } \mathrm{B}_{d_0}\bigl(\bm{\rho}({\bm z}_0)\bigr)\comma \\
\mathrm{D}_{\mathbb{C}}^2 v_{s\comma l}>\bm{0} \mbox{ in } \mathrm{B}_{d_0}\bigl(\bm{\rho}({\bm z}_0)\bigr)\comma \\
v_{s\comma l}=0 \mbox{ on } \partial\mathrm{B}_{d_0}\bigl(\bm{\rho}({\bm z}_0)\bigr)\comma
\end{gathered} \right.
\end{eq}
where the normalized constant $A_{s\comma l}$ is defined as
\begin{eq}
A_{s\comma l}\triangleq\int_{\mathrm{B}_{d_0} \bigl(\bm{\rho}({\bm z}_0)\bigr)} \tau_l\left(\tilde{u}_s\circ\bm{\rho}^{-1}\right)\left(\varphi\circ\bm{\rho}^{-1}\right)^n\det\left(g_{j\overline k}\circ\bm{\rho}^{-1}\right)\dif V_{\mathbb{C}^n}=\int_{\mathcal{B}_{d_0}(\bm{z}_0)} \tau_l\left(\tilde{u}_s\right)\varphi^n\bm{\omega}_{\bm g}^n
\end{eq}
so that
\begin{eq}
\int_{\mathrm{B}_{d_0}\bigl(\bm{\rho}({\bm z}_0)\bigr)} \det\left(\mathrm{D}_{\mathbb{C}}^2 v_{s\comma l}\right)\dif V_{\mathbb{C}^n}=1.
\end{eq}
Note that
\begin{eq}
v_{s\comma l}<0 \mbox{ in } \mathrm{B}_{d_0}\bigl(\bm{\rho}(\bm{z}_0)\bigr)
\end{eq}
and
\begin{eq}
\lim_{l\to\infty}A_{s\comma l}=A_s\triangleq\int_{\Omega_s} \tilde{u}_s\varphi^n\bm{\omega}_{\bm g}^n>0. 
\end{eq}
Now we define the comparison function
\begin{eq}
w\triangleq-\nu(-v_{s\comma l}\circ\bm{\rho})^b+\tilde{u}_s\comma
\end{eq}
where $\nu$, $b$ are positive constants to be chosen later so that $w\leqslant 0$ in $\overline{\mathcal{B}_{d_0}({\bm z}_0)}$. Assume that $\max\limits_{\overline{\mathcal{B}_{d_0}({\bm z}_0)}} w=w(\bm{z}_1)$ for some point $\bm{z}_1\in\overline{\mathcal{B}_{d_0}({\bm z}_0)}$. If $\bm{z}_1\in\overline{\mathcal{B}_{d_0}({\bm z}_0)}\setminus\Omega_s$, it's easy to see that $w(\bm{z}_1)\leqslant 0$. Henceforth we assume that $\bm{z}_1\in\Omega_s$. Straightforward calculations show that
\begin{eq} \begin{aligned}
\mathrm{D}_{j\overline k}w&=\nu b(1-b)(-v_{s\comma l}\circ\bm{\rho})^{b-2}\mathrm{D}_j\left(v_{s\comma l}\circ\bm{\rho}\right)\mathrm{D}_{\overline k}\left(v_{s\comma l}\circ\bm{\rho}\right) \\
&\phantom{=}+\nu b(-v_{s\comma l}\circ\bm{\rho})^{b-1}\mathrm{D}_{j\overline k}\left(v_{s\comma l}\circ\bm{\rho}\right)-\mathrm{D}_{j\overline k}u-\frac{\varepsilon}{2}\delta_{jk}.
\end{aligned} \end{eq}
Note that the matrix $\left(\mathrm{D}_{j\overline k}w(\bm{z}_1)\right)$ is negative semi-definite since $\bm{z}_1$ is the maximum point of $w$. Recalling the definition \eqref{eq: F^jk} of $F_{\bm\chi+\mathrm{i}\partial\overline\partial u}^{j\overline k}$ and the property (2) in \propref{prop: properties of F^jk}, we assume that $b<1$ and calculate at $\bm{z}_1$ as follows:
\begin{eq} \begin{aligned}
0\geqslant F_{\bm\chi+\mathrm{i}\partial\overline\partial u}^{j\overline k}\mathrm{D}_{j\overline k}w&\geqslant\nu b(-v_{s\comma l}\circ\bm{\rho})^{b-1}F_{\bm\chi+\mathrm{i}\partial\overline\partial u}^{j\overline k}\mathrm{D}_{j\overline k}(v_{s\comma l}\circ\bm{\rho})-F_{\bm\chi+\mathrm{i}\partial\overline\partial u}^{j\overline k}\left(\mathrm{D}_{j\overline k}u+\frac{\varepsilon}{2}\delta_{jk}\right) \\
&\geqslant\nu b(-v_{s\comma l}\circ\bm{\rho})^{b-1}F_{\bm\chi+\mathrm{i}\partial\overline\partial u}^{j\overline k}\mathrm{D}_{j\overline k}(v_{s\comma l}\circ\bm{\rho})-F_{\bm\chi+\mathrm{i}\partial\overline\partial u}^{j\overline k}\left(\chi_{j\overline k}+\mathrm{D}_{j\overline k}u\right)\comma
\end{aligned} \end{eq}
where the last ``$\geqslant$'' is due to \eqref{eq: chi} and \eqref{eq: minimal and maximal eigenvalue of g_jk}. Further calculations show that at $\bm{z}_1$,
\begin{eq} \begin{aligned}
0&\geqslant\nu b(-v_{s\comma l}\circ\bm{\rho})^{b-1}F_{\bm\chi+\mathrm{i}\partial\overline\partial u}^{j\overline k}\mathrm{D}_{j\overline k}(v_{s\comma l}\circ\bm{\rho})-F_{\bm\chi+\mathrm{i}\partial\overline\partial u}^{j\overline k}\left(\chi_{j\overline k}+\mathrm{D}_{j\overline k}u\right) \\
&\geqslant\nu b(-v_{s\comma l}\circ\bm{\rho})^{b-1}n\left(\frac{\delta\tilde{u}_s}{A_{s\comma l}}\right)^{\frac 1n}\frac{\varphi}{f\Bigl(\bm\lambda\bigl(\bm{\omega}_{\bm g}^{-1}(\bm{\chi}+\mathrm{i}\partial\overline\partial u)\bigr)\Bigr)}-\Lambda \\
&\geqslant\nu b(-v_{s\comma l}\circ\bm{\rho})^{b-1}n\delta^{\frac 1n}A_{s\comma l}^{-\frac 1n}\left(\tilde{u}_s\right)^{\frac 1n}-\Lambda\comma
\end{aligned} \end{eq}
where the second ``$\geqslant$'' is due to the properties (3), (4) in \propref{prop: properties of F^jk} and the fact that at~$\bm{z}_1$,
\begin{eq}
\tau_l\left(\tilde{u}_s\right)=\tilde{u}_s+\frac 1l.
\end{eq}
It follows that at $\bm{z}_1$,
\begin{eq}
\tilde{u}_s\leqslant\left(\frac{\Lambda}{bn\delta^{\frac 1n}}\right)^n\nu^{-n}A_{s\comma l}(-v_{s\comma l}\circ\bm{\rho})^{n(1-b)}
\end{eq}
and
\begin{eq}
w\leqslant\nu(-v_{s\comma l}\circ\bm{\rho})^b\Biggl(-1+\left(\frac{\Lambda}{bn\delta^{\frac 1n}}\right)^n\nu^{-n-1}A_{s\comma l}(-v_{s\comma l}\circ\bm{\rho})^{n(1-b)-b}\Biggr).
\end{eq}
Now we choose the constants $b$ and $\nu$ as follows:
\begin{ga} \label{eq: b and nu, Hermitian}
\text{$n(1-b)-b=0$, i.e. $b=\frac{n}{n+1}\in(0\comma1)$}\comma \\
\text{$-1+\left(\frac{\Lambda}{bn\delta^{\frac 1n}}\right)^n\nu^{-n-1}A_{s\comma l}=0$, i.e. $\nu=\left(\frac{(n+1)\Lambda}{n^2\delta^{\frac 1n}}\right)^{\frac{n}{n+1}}A_{s\comma l}^{\frac{1}{n+1}}$}.
\end{ga}
Then we obtain $\max\limits_{\overline{\mathcal{B}_{d_0}({\bm z}_0)}}w=w(\bm{z}_1)\leqslant 0$, i.e.
\begin{eq} \label{eq: comparison inequality, Hermitian}
\tilde{u}_s\leqslant\nu(-v_{s\comma l}\circ\bm{\rho})^b=C_1A_{s\comma l}^{\frac{1}{n+1}}\left(-v_{s\comma l}\circ\bm{\rho}\right)^{\frac{n}{n+1}}
\end{eq}
in $\overline{\mathcal{B}_{d_0}({\bm z}_0)}$. Thus, on $\Omega_s$ we obtain the following key estimate
\begin{eq} \label{eq: key estimate, Hermitian} 
C_1^{-\frac{n+1}{n}}A_{s\comma l}^{-\frac 1n}\left(\tilde{u}_s\right)^{\frac{n+1}{n}}\leqslant -v_{s\comma l}\circ\bm{\rho}.
\end{eq}
By \lemref{lem: exponential integral estimate} and \eqref{eq: minimal and maximal eigenvalue of g_jk}, we have
\begin{eq} \begin{aligned}
\int_{\Omega_s} \mathrm{e}^{\alpha C_1^{-\frac{n+1}{n}}A_{s\comma l}^{-\frac 1n}\left(\tilde{u}_s\right)^{\frac{n+1}{n}}}\bm\omega_{\bm g}^n&\leqslant\int_{\mathcal{B}_{d_0}(\bm{z}_0)} \mathrm{e}^{-\alpha v_{s\comma l}\circ\bm{\rho}}\bm\omega_{\bm g}^n \\
&=\int_{\mathrm{B}_{d_0}\bigl(\bm{\rho}(\bm{z}_0)\bigr)} \mathrm{e}^{-\alpha v_{s\comma l}}\det\left(g_{j\overline k}\circ\bm{\rho}^{-1}\right)\dif V_{\mathbb{C}^n}\leqslant C_2
\end{aligned} \end{eq}
for positive constants $\alpha$ and $C_2$ depending only on $n$ and $d_0$. Note that the above inequality holds for any $l\in\mathbb{N}$. Letting $l\to\infty$, we obtain
\begin{eq} \label{eq: key inequality in L^infty estimates, Hermitian}
\int_{\Omega_s} \mathrm{e}^{\alpha C_1^{-\frac{n+1}{n}}A_s^{-\frac 1n}\left(\tilde{u}_s\right)^{\frac{n+1}{n}}}\bm\omega_{\bm g}^n\leqslant C_2.
\end{eq}
Define
\begin{eq}
\eta(t)\triangleq t^n\log^r\left(1+t^{\frac{n}{n+1}}\right) (t\geqslant0).
\end{eq}
Let $\eta^{-1}$ denote the inverse function of $\eta$. By the classical Young inequality, for any $x\comma y\geqslant0$ there holds
\begin{eq} \begin{aligned}
\eta(x)y&\leqslant\int_0^{\eta(x)} \left(\mathrm{e}^{\eta^{-1}(t)}-1\right)\dif t+\int_0^y \eta\bigl(\log(1+t)\bigr)\dif t \\
&\leqslant\eta(x)(\mathrm{e}^x-1)+y\eta\bigl(\log(1+y)\bigr) \\
&\leqslant C_3\mathrm{e}^{2x}+y\log^n(1+y)\log^r\left(1+\log^{\frac{n}{n+1}}(1+y)\right).
\end{aligned} \end{eq}
Letting 
\begin{eq}
x=\left(\frac{\tilde{u}_s(\bm{z})}{\left(\frac{2}{\alpha}\right)^{\frac{n}{n+1}}C_1A_s^{\frac{1}{n+1}}}\right)^{\frac{n+1}{n}}\comma\quad y=\frac{\bigl(\varphi(\bm{z})\bigr)^n}{\Ent_{n\comma r}\left(\varphi^n\right)}\comma
\end{eq}
and integrating both sides of the acquired inequality over $\Omega_s$, we obtain
\begin{eq} \begin{aligned}
&\mathrel{\phantom{=}}\int_{\Omega_s} \left(\frac{\tilde{u}_s}{\left(\frac{2}{\alpha}\right)^{\frac{n}{n+1}}C_1A_s^{\frac{1}{n+1}}}\right)^{n+1}\log^r\left(1+\frac{\tilde{u}_s}{\left(\frac{2}{\alpha}\right)^{\frac{n}{n+1}}C_1A_s^{\frac{1}{n+1}}}\right) \varphi^n\bm\omega_{\bm g}^n \\
&\leqslant \biggl(C_3C_2+\int_{\Omega_s\cap\{y>\mathrm{e}-1\}} y\log^n(1+y)\log^r\bigl(1+\log(1+y)\bigr)\bm\omega_{\bm g}^n \\
&\phantom{\leqslant \biggl(}+(\mathrm{e}-1)\log^r2\cdot V_{\bm g}\biggr)\Ent_{n\comma r}\left(\varphi^n\right) \\
&\leqslant C_4\Ent_{n\comma r}\left(\varphi^n\right)
\end{aligned} \end{eq}
by \eqref{eq: key inequality in L^infty estimates, Hermitian} and \eqref{eq: to be used in the proof of the inequality of Holder-Young type}. This indicates that
\begin{eq}
\left\|\tilde{u}_s\right\|_{\mathrm{L}^{n+1}(\log\mathrm{L})^r\left(\Omega_s\comma\varphi^n\bm\omega_{\bm g}^n\right)}\leqslant \left(\frac{2}{\alpha}\right)^{\frac{n}{n+1}}C_1A_s^{\frac{1}{n+1}}\max\left\{1\comma \bigl(C_4\Ent_{n\comma r}(\varphi^n)\bigr)^{\frac{1}{n+1}}\right\}
\end{eq}
by \propref{prop: estimates of norms}. Let $h(s)$ denote
\begin{eq}
\int_{\Omega_s} \varphi^n\bm\omega_{\bm g}^n.
\end{eq}
Then by \thmref{thm: an inequality of Holder-Young type} we have
\begin{eq} \begin{aligned}
A_s&=\int_{\Omega_s} \tilde{u}_s\varphi^n\bm\omega_{\bm g}^n \\
&\leqslant\frac{2\left(n+1+\frac{r}{2}\right)^{\frac{r}{n+1}}\left\|\tilde{u}_s\right\|_{\mathrm{L}^{n+1}(\log\mathrm{L})^r\left(\Omega_s\comma\varphi^n\bm\omega_{\bm g}^n\right)}\bigl(h(s)\bigr)^{\frac{n}{n+1}}}{\log^{\frac{r}{n+1}}\left(1+\frac{1}{h(s)}\right)} \\
&\leqslant \frac{2\left(n+1+\frac{r}{2}\right)^{\frac{r}{n+1}}\left(\frac{2}{\alpha}\right)^{\frac{n}{n+1}}C_1A_s^{\frac{1}{n+1}}\max\left\{1\comma \bigl(C_4\Ent_{n\comma r}(\varphi^n)\bigr)^{\frac{1}{n+1}}\right\}\bigl(h(s)\bigr)^{\frac{n}{n+1}}}{\log^{\frac{r}{n+1}}\left(1+\frac{1}{h(s)}\right)}\comma
\end{aligned} \end{eq}
and therefore
\begin{eq}
A_s\leqslant \frac{C_5h(s)}{\log^{\frac{r}{n}}\left(1+\frac{1}{h(s)}\right)}
\end{eq}
for any $s\in\left[0\comma\frac{\varepsilon}{2}d_0^2\right)$, where $C_5$ is a positive constant depending on $\Ent_{n\comma r}\left(\varphi^n\right)$ and other trivial quantities. Note that the above inequality holds for any $s\geqslant0$ in fact. On the other hand, for any $t>s$ there holds
\begin{eq}
A_s\geqslant\int_{\Omega_t} \left(-u(\bm z)+u({\bm z}_0)+\frac{\varepsilon}{2}d_0^2-\frac{\varepsilon}{2}\bigl|\bm{\rho}(\bm z)-\bm{\rho}({\bm z}_0)\bigr|^2-s\right)\varphi^n\bm\omega_{\bm g}^n\geqslant(t-s)h(t).
\end{eq}
As a result, we have
\begin{eq} \label{eq: iteration, Hermitian}
h(t)\leqslant\frac{C_5}{t-s}\frac{h(s)}{\log^{\frac{r}{n}}\left(1+\frac{1}{h(s)}\right)}
\end{eq}
for any $t\comma s\in\mathbb{R}\colon t>s\geqslant0$. Since $r>n$ and
\begin{eq}
h(s)>0\comma\forall s\in\left[0\comma\frac{\varepsilon}{2}d_0^2\right)\comma
\end{eq}
\eqref{eq: iteration, Hermitian} indicates by \corref{cor: another version of the iteration lemma of De Giorgi type} that
\begin{eq} \label{eq: h(0), Hermitian}
h(0)\geqslant\frac{1}{\mathrm{e}^{L_{\frac{n+r}{2n}}}-1}\comma
\end{eq}
where
\begin{eq}
L_{\frac{n+r}{2n}}\triangleq\max\left\{\left(\frac{C_5\mathrm{e}\left(\frac{2}{\log2}\right)^{\frac{n+r}{2n}}}{\frac{r-n}{2n}\cdot\frac{\varepsilon}{2}d_0^2}\right)^{\frac{2n}{r-n}}\comma \left(\frac{C_5\mathrm{e}2^{\frac{n+r}{2n}}}{\frac{r-n}{2n}\cdot\frac{\varepsilon}{2}d_0^2}\right)^{\frac{n}{r}}\right\}.
\end{eq}
On the other hand, since
\begin{eq}
\Omega_0=\left\{\bm{z}\in\mathcal{B}_{d_0}({\bm z}_0)\middle|-u(\bm z)+u(\bm{z}_0)+\frac{\varepsilon}{2}d_0^2-\frac{\varepsilon}{2}\bigl|\bm{\rho}(\bm z)-\bm{\rho}({\bm z}_0)\bigr|^2>0\right\}\comma
\end{eq}
there holds
\begin{eq} \label{eq: estimate of osc u, Hermitian} \begin{aligned}
&\mathrel{\phantom{=}}\frac{1}{\Ent_{n\comma 0}(\varphi^n)}h(0)\log^n\left(1+\osc_{\mathcal M}u\right) \\
&=\int_{\Omega_0} \log^n\left(1+\sup_{\mathcal M}u-u(\bm{z}_0)\right)\frac{\varphi^n}{\Ent_{n\comma 0}(\varphi^n)}\bm\omega_{\bm g}^n \\
&\leqslant\int_{\Omega_0} \log^n\left(1+\sup_{\mathcal M}u-u+\frac{\varepsilon}{2}d_0^2\right)\frac{\varphi^n}{\Ent_{n\comma 0}(\varphi^n)}\bm\omega_{\bm g}^n.
\end{aligned} \end{eq}
By the classical Young inequality, for any $x\comma y\geqslant 0$ we have
\begin{eq} \begin{aligned}
xy&\leqslant\int_0^x \left(\mathrm{e}^{\frac12 t^{\frac 1n}}-1\right)\dif t+\int_0^y 2^n\log^n(1+t)\dif t \\
&\leqslant C_6\mathrm{e}^{x^{\frac 1n}}+2^ny\log^n(1+y).
\end{aligned} \end{eq}
Letting 
\begin{eq}
x=\log^n\left(1+\sup_{\mathcal M}u-u(\bm{z})+\frac{\varepsilon}{2}d_0^2\right)\comma\quad y=\frac{\bigl(\varphi(\bm{z})\bigr)^n}{\Ent_{n\comma 0}\left(\varphi^n\right)}\comma
\end{eq}
and integrating both sides of the acquired inequality over $\Omega_0$, we see that
\begin{eq} \label{eq: estimate of osc u, Hermitian, continued} \begin{aligned}
&\mathrel{\phantom{=}}\int_{\Omega_0} \log^n\left(1+\sup_{\mathcal M}u-u+\frac{\varepsilon}{2}d_0^2\right)\frac{\varphi^n}{\Ent_{n\comma 0}(\varphi^n)}\bm\omega_{\bm g}^n \\
&\leqslant C_6\int_{\Omega_0} \left(1+\sup_{\mathcal M}u-u+\frac{\varepsilon}{2}d_0^2\right)\bm\omega_{\bm g}^n+2^n \\
&\leqslant C_6\biggl(C_7+\left(1+\frac{\varepsilon}{2}d_0^2\right)V_{\bm g}\biggr)+2^n
\end{aligned} \end{eq}
by \eqref{eq: to be used in the proof of the inequality of Holder-Young type} and \propref{prop: L^1 estimate}. Finally, combining \eqref{eq: estimate of osc u, Hermitian}, \eqref{eq: estimate of osc u, Hermitian, continued}, \eqref{eq: h(0), Hermitian} and \eqref{eq: Ent_n0 controlled by Ent_nr}, we obtain \eqref{eq: L^infty estimate, Hermitian}.
\end{proof}


\section{Gluing lemma of smooth radial plurisubharmonic functions and an example} \label{sec: Gluing lemma of smooth radial plurisubharmonic functions and an example}

In this section, we prove a gluing lemma of smooth, radial, strictly plurisubharmonic functions and use this lemma to construct an explicit example to show that the $\mathrm{L}^\infty$ estimates in \thmref{thm: L^infty estimates on Kahler manifolds} and \thmref{thm: L^infty estimates on Hermitian manifolds} may fail when $r\leqslant n-1$. The following lemma is well-known to experts.

\begin{lem} \label{lem: regularization of |t|}
For any $\varepsilon>0$, there exists a $\rho_{\varepsilon}\in\mathrm{C}^\infty(\mathbb{R})$ which satisfies:
\begin{enumerate}[(1)]
\item For any $t\in\mathbb{R}\colon |t|\geqslant\varepsilon$, there holds $\rho_{\varepsilon}(t)=|t|$;
\item For any $t\in\mathbb{R}$, there hold $\rho_{\varepsilon}(t)\geqslant|t|$, $\rho_{\varepsilon}(-t)=\rho_{\varepsilon}(t)$, $\bigl|\rho_{\varepsilon}'(t)\bigr|\leqslant 1$ and $0\leqslant\rho_{\varepsilon}''(t)\leqslant\frac{M}{\varepsilon}$, where $M$ is a positive constant independent of $\varepsilon$, $\rho_{\varepsilon}$, $t$.
\end{enumerate}
\end{lem}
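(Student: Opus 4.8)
The plan is to take $\rho_\varepsilon$ to be the mollification of $t\mapsto|t|$ at scale $\varepsilon$. Fix once and for all a function $\phi\in\mathrm{C}_c^\infty(\mathbb{R})$ with $\phi\geqslant0$, $\supp\phi\subset[-1,1]$, $\phi(-t)=\phi(t)$ and $\int_{\mathbb{R}}\phi=1$; set $\phi_\varepsilon(t)\triangleq\varepsilon^{-1}\phi(t/\varepsilon)$ and define
\begin{eq}
\rho_\varepsilon(t)\triangleq\bigl(|\cdot|*\phi_\varepsilon\bigr)(t)=\int_{\mathbb{R}}|t-\varepsilon u|\,\phi(u)\dif u.
\end{eq}
Smoothness $\rho_\varepsilon\in\mathrm{C}^\infty(\mathbb{R})$ is immediate since $\phi_\varepsilon\in\mathrm{C}_c^\infty$ and $|\cdot|\in\mathrm{L}^1_{\mathrm{loc}}(\mathbb{R})$, and the scaling identity $\rho_\varepsilon(t)=\varepsilon\,\rho_1(t/\varepsilon)$ (with $\rho_1$ the choice for $\varepsilon=1$) will be what makes the dependence on $\varepsilon$ of all the estimates transparent.

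Then I would verify the listed properties, each being a one-line computation. Evenness follows from the substitution $u\mapsto-u$ together with $\phi(-u)=\phi(u)$. For $|t|\geqslant\varepsilon$ the integration variable $t-\varepsilon u$ never changes sign, so $|t-\varepsilon u|$ is affine there and $\rho_\varepsilon(t)=\int_{\mathbb{R}}(t-\varepsilon u)\,\mathrm{sgn}(t)\,\phi(u)\dif u=|t|$ because $\int_{\mathbb{R}}u\,\phi(u)\dif u=0$. The inequality $\rho_\varepsilon(t)\geqslant|t|$ is Jensen's inequality for the convex function $|\cdot|$ against the probability density $\phi$. Differentiating under the integral sign (legitimate by the identity $(f*g)^{(k)}=f*g^{(k)}$ valid for $g\in\mathrm{C}_c^\infty$), $\rho_\varepsilon'=\mathrm{sgn}*\phi_\varepsilon$, hence $\bigl|\rho_\varepsilon'(t)\bigr|\leqslant\int_{\mathbb{R}}\phi_\varepsilon=1$; one more derivative, using that the distributional second derivative of $|\cdot|$ equals $2\delta_0$ — or, to avoid distributions, integrating by parts twice so as to move both derivatives onto $\phi_\varepsilon$ — gives $\rho_\varepsilon''=2\phi_\varepsilon$, so $0\leqslant\rho_\varepsilon''(t)=\frac{2}{\varepsilon}\phi\!\left(\frac{t}{\varepsilon}\right)\leqslant\frac{M}{\varepsilon}$ with $M\triangleq 2\|\phi\|_{\mathrm{L}^\infty(\mathbb{R})}$.

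There is no genuine obstacle here; the only two points deserving a word of care are the justification of differentiation under the integral — covered by the convolution identity just quoted — and the claim that $M$ does not depend on $\varepsilon$, which is precisely the content of the scaling relation $\rho_\varepsilon''(t)=\varepsilon^{-1}\rho_1''(t/\varepsilon)$, giving $\|\rho_\varepsilon''\|_{\mathrm{L}^\infty(\mathbb{R})}=\varepsilon^{-1}\|\rho_1''\|_{\mathrm{L}^\infty(\mathbb{R})}$. Thus $M$ may be taken to be $2\|\phi\|_{\mathrm{L}^\infty(\mathbb{R})}$, a quantity fixed together with the mollifier and in particular independent of $\varepsilon$, $\rho_\varepsilon$ and $t$, which completes the construction.
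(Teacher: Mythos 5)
Your proof is correct, and mollification of $|\cdot|$ by an even compactly supported smooth kernel at scale $\varepsilon$ is precisely the standard construction that the paper points to (it cites \cite{Guan2002} rather than writing out the argument). Every step checks out: evenness of $\phi$ gives both $\rho_\varepsilon(-t)=\rho_\varepsilon(t)$ and the vanishing of $\int u\,\phi(u)\dif u$ needed for property (1) when $|t|\geqslant\varepsilon$; Jensen gives $\rho_\varepsilon\geqslant|\cdot|$; the identities $\rho_\varepsilon'=\mathrm{sgn}*\phi_\varepsilon$ and $\rho_\varepsilon''=2\phi_\varepsilon$ are correct and yield $|\rho_\varepsilon'|\leqslant1$ and $0\leqslant\rho_\varepsilon''\leqslant\frac{2\|\phi\|_{\mathrm{L}^\infty}}{\varepsilon}$. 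The only cosmetic difference from the source is the specific value of $M$: the paper quotes $M=3$, whereas your construction yields $M=2\|\phi\|_{\mathrm{L}^\infty(\mathbb{R})}$ for whichever normalized even bump $\phi$ you fixed; since any smooth even probability density on $[-1,1]$ has $\|\phi\|_{\mathrm{L}^\infty}>\frac12$ but can be taken arbitrarily close to $\frac12$, your bound is at least as good, and in any case the lemma only asks for some $\varepsilon$-independent constant. Your write-up is actually more self-contained than the paper, which relies on an external citation for this lemma.
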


\begin{proof}
See e.g. \cite[pp.~207--208]{Guan2002}. The constant $M$ can be taken as $3$.
\end{proof}

Next we generalize \lemref{lem: regularization of |t|} to glue two smooth, strictly convex functions defined on disjointed intervals.

\begin{prop} \label{prop: glue two smooth, strictly convex functions defined on disjointed intervals}
Let $a_1$, $b_1$, $a_2$, $b_2$ be real constants such that $a_1<b_1<a_2<b_2$, $k\geqslant 2$ be a positive integer or $\infty$, $f\in\mathrm{C}^k\bigl([a_1\comma b_1]\bigr)$ and $g\in\mathrm{C}^k\bigl([a_2\comma b_2]\bigr)$. Assume that
\begin{eq} \label{eq: f and g strictly convex}
\alpha_1\triangleq\min_{[a_1\comma b_1]}f''>0\comma\quad\alpha_2\triangleq\min_{[a_2\comma b_2]}g''>0.
\end{eq}
Then there exists a function $h\in\mathrm{C}^k(\mathbb{R})$ satisfying
\begin{eq}
\left.h\vphantom{h_{[a_1\comma b_1]}}\right|_{[a_1\comma b_1]}=f\comma\left.h\vphantom{h_{[a_2\comma b_2]}}\right|_{[a_2\comma b_2]}=g\comma \mbox{and } \inf_{\mathbb{R}}h''>0
\end{eq}
if and only if
\begin{eq} \label{eq: compatibility condition}
f_-'(b_1)<\frac{g(a_2)-f(b_1)}{a_2-b_1}<g_+'(a_2).
\end{eq}
Moreover, if \eqref{eq: compatibility condition} holds, the function $h$ can be chosen to satisfy
\begin{eq} \label{eq: infimum of h''}
\inf_{\mathbb{R}}h''\geqslant\min\left\{\frac{\alpha_1}{2}\comma\frac{\alpha_2}{2}\comma\frac{\frac{g(a_2)-f(b_1)}{a_2-b_1}-f_-'(b_1)}{a_2-b_1}\comma\frac{g_+'(a_2)-\frac{g(a_2)-f(b_1)}{a_2-b_1}}{a_2-b_1}\right\}
\end{eq}
and
\begin{eq} \label{eq: supremum of h''} \begin{aligned}
\sup_{\mathbb{R}}h''&\leqslant\frac{16M\bigl(g_+'(a_2)-f_-'(b_1)\bigr)^2}{(a_2-b_1)\min\left\{\frac{g(a_2)-f(b_1)}{a_2-b_1}-f_-'(b_1)\comma g_+'(a_2)-\frac{g(a_2)-f(b_1)}{a_2-b_1}\right\}} \\
&\phantom{=}+1+\max\left\{\sup_{[a_1\comma b_1]}f''\comma\sup_{[a_2\comma b_2]}g''\right\}\comma
\end{aligned} \end{eq}
where $M$ is the positive constant in \lemref{lem: regularization of |t|}.
\end{prop}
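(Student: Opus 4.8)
The plan is to prove the two directions of the equivalence separately, and for the harder (sufficiency) direction to pass to the first derivative, so that the only remaining shape constraint is strict monotonicity — precisely the situation for which the regularization $\rho_\varepsilon$ of \lemref{lem: regularization of |t|} is built. Necessity is immediate: if $h\in\mathrm{C}^k(\mathbb R)$ satisfies $h|_{[a_1,b_1]}=f$, $h|_{[a_2,b_2]}=g$ and $\inf_{\mathbb R}h''>0$, then $h'$ is $\mathrm{C}^{k-1}$ and strictly increasing with $h'(b_1)=f_-'(b_1)$, $h'(a_2)=g_+'(a_2)$, so $f_-'(b_1)<g_+'(a_2)$; moreover $h'(t)\in\bigl(f_-'(b_1),g_+'(a_2)\bigr)$ for every $t\in(b_1,a_2)$, whence $(a_2-b_1)f_-'(b_1)<\int_{b_1}^{a_2}h'(t)\,\mathrm dt<(a_2-b_1)g_+'(a_2)$; since $\int_{b_1}^{a_2}h'=h(a_2)-h(b_1)=g(a_2)-f(b_1)$, dividing by $a_2-b_1$ yields \eqref{eq: compatibility condition}.

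For sufficiency I first reduce to a gluing problem on the gap $[b_1,a_2]$ at the level of the derivative. After extending $f$ to a $\mathrm{C}^k$ function on $(-\infty,b_1]$ with second derivative $\geqslant\tfrac{\alpha_1}{2}$ and $g$ to a $\mathrm{C}^k$ function on $[a_2,+\infty)$ with second derivative $\geqslant\tfrac{\alpha_2}{2}$ (a routine one-sided extension obtained by extending $f''$, $g''$ through the endpoints keeping positivity and integrating twice), it suffices to produce a strictly increasing $p\in\mathrm{C}^{k-1}\bigl([b_1,a_2]\bigr)$ with $p=f'$ on a right-neighbourhood of $b_1$, $p=g'$ on a left-neighbourhood of $a_2$, and $\int_{b_1}^{a_2}p=g(a_2)-f(b_1)$; then $h:=f$ on $(-\infty,b_1]$, $h(t):=f(b_1)+\int_{b_1}^t p$ on $[b_1,a_2]$, $h:=g$ on $[a_2,+\infty)$ is $\mathrm{C}^k$ on $\mathbb R$, equals $f$ on $[a_1,b_1]$, equals $g$ on $[a_2,b_2]$, and has $h''=p'>0$. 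To build $p$, set $m_1:=f_-'(b_1)<m_2:=g_+'(a_2)$ and extend $f'$, $g'$ to strictly increasing $\mathrm{C}^{k-1}$ functions $F,G$ on $\mathbb R$ ($F=f'$ on $[a_1,b_1]$, $G=g'$ on $[a_2,b_2]$) chosen so that $\psi:=F-G$ is positive near $b_1$, negative near $a_2$, and has a single transverse zero $t^\ast\in(b_1,a_2)$. Then $\max(F,G)=\tfrac12(F+G)+\tfrac12|\psi|$ equals $F$ near $b_1$, $G$ near $a_2$, and is strictly increasing; replacing $|\psi|$ by $\rho_\varepsilon\circ\psi$ — which agrees with $|\psi|$ wherever $|\psi|\geqslant\varepsilon$, i.e.\ off a small neighbourhood of $t^\ast$ once $\varepsilon$ is small — gives $p:=\tfrac12(F+G)+\tfrac12\,\rho_\varepsilon\circ\psi$, which is $\mathrm{C}^{k-1}$, equals $f'$ near $b_1$ and $g'$ near $a_2$, and satisfies $p'\geqslant\min(F',G')>0$ because $|\rho_\varepsilon'|\leqslant1$.

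It remains to tune the free data ($F,G$ and $\varepsilon$) so that $\int_{b_1}^{a_2}p=g(a_2)-f(b_1)$. This is possible because \eqref{eq: compatibility condition} says exactly that $g(a_2)-f(b_1)$ lies strictly in $\bigl((a_2-b_1)m_1,\,(a_2-b_1)m_2\bigr)$, which is the open interval swept out by $\int_{b_1}^{a_2}\max(F,G)$ as $F,G$ range over admissible extensions; the small excess of $\rho_\varepsilon\circ\psi$ over $|\psi|$ is then absorbed by a final choice of $\varepsilon$ (or, alternatively, by a tiny monotonicity-preserving correction added to $p$ in the interior of the gap). Once $h$ is assembled as above, the quantitative bounds \eqref{eq: infimum of h''}–\eqref{eq: supremum of h''} follow by choosing $\varepsilon$ explicitly of size comparable to $(a_2-b_1)\min\bigl\{\tfrac{g(a_2)-f(b_1)}{a_2-b_1}-m_1,\ m_2-\tfrac{g(a_2)-f(b_1)}{a_2-b_1}\bigr\}/(m_2-m_1)$ and expanding $h''=p'=\tfrac12(F'+G')+\tfrac12\rho_\varepsilon''(\psi)(\psi')^2+\tfrac12\rho_\varepsilon'(\psi)\psi''$: the lower bound comes from $p'\geqslant\min(F',G')$ together with the explicit slopes on the transition, and the upper bound's factor $16M$ comes from $\rho_\varepsilon''\leqslant M/\varepsilon$ and $|\psi'|\leqslant m_2-m_1$ once the variation of $F,G$ is localized appropriately.

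The main obstacle is the coupling of the three requirements on $p$ — matching the $(k-1)$-jets of $f'$ and $g'$ at the two ends of the gap, strict monotonicity throughout, and hitting the value of $\int_{b_1}^{a_2}p$ on the nose — together with the demand for explicit constants. The first two are in mild tension (the smoothed maximum is automatically monotone, but its transition profile is governed by $\rho_\varepsilon$, whereas $f'$ and $g'$ prescribe fixed jets at $b_1$ and $a_2$), which forces the extensions $F,G$ to be engineered carefully; and the integral constraint must be met exactly, which is where the strict inequalities in \eqref{eq: compatibility condition} are genuinely used and where most of the bookkeeping in tracking the constants of \eqref{eq: infimum of h''}–\eqref{eq: supremum of h''} lives.
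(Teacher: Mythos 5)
Your overall strategy — use the $\rho_\varepsilon$-regularized maximum to glue — is the same mechanism the paper uses, but you apply it at the level of the first derivative, producing a strictly increasing $p$ and then integrating, whereas the paper applies it at the level of the function itself, forming $h=\tfrac12\bigl(\tilde f+\tilde g+\rho_\varepsilon(\tilde f-\tilde g)\bigr)$ from modified versions $\tilde f,\tilde g$ of $f,g$ whose tails outside $[a_1,b_1]$ and $[a_2,b_2]$ are engineered to have second derivative exactly equal to a small constant $c$. This difference is decisive. By gluing at the function level the paper arranges $\tilde f(b_1)=f(b_1)$ and $\tilde g(a_2)=g(a_2)$ from the start, so once $\tilde f>\tilde g$ near $b_1$ and $\tilde f<\tilde g$ near $a_2$ the resulting $h$ automatically takes the correct values at both ends of the gap; no matching condition on an integral ever appears. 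Your derivative-level construction, by contrast, introduces the extra constraint $\int_{b_1}^{a_2}p=g(a_2)-f(b_1)$, and this is precisely where your argument is incomplete.

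Two concrete gaps. First, you assert that the target value $g(a_2)-f(b_1)$ can be hit by tuning the extensions $F,G$ and then $\varepsilon$, via an intermediate-value argument on $\int\max(F,G)$. This is plausible, but it is not compatible as stated with producing the \emph{explicit} bounds \eqref{eq: infimum of h''}--\eqref{eq: supremum of h''}: the tuning couples $F,G,\varepsilon$ to the unknown data in a way you never unwind, and you cannot first "choose $\varepsilon$ comparable to" a prescribed formula and then also use $\varepsilon$ as the free parameter that absorbs the integral defect. Second, and more serious, your upper bound on $h''$ rests on the claim $|\psi'|\leqslant m_2-m_1$ with $\psi'=F'-G'$. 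This is a bound on the \emph{difference of second derivatives} of the original extensions, and there is no reason it should hold: the paper's analogous bound $|\tilde f'-\tilde g'|\leqslant g'(a_2)-f'(b_1)$ is a bound on a difference of \emph{first} derivatives, and it is established by a direct computation exploiting that $\tilde f'',\tilde g''$ equal the constant $c$ throughout the gap (see the chain leading to \eqref{eq: (tilde g'-tilde f')^2}). In your setup $F'$ and $G'$ must vary enough to make $\psi=F-G$ cross zero and to meet the integral constraint, and nothing prevents $|F'-G'|$ from becoming large; "localizing the variation appropriately" is an unresolved promissory note. Your necessity direction is fine and is essentially what the paper's necessity reduces to, but the sufficiency as written does not close.
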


\begin{proof}
Without loss of generality, we assume that $k=\infty$ and $f\comma g\in\mathrm{C}^\infty(\mathbb{R})$. The necessity is obvious and we will focus on the proof of the sufficiency, i.e. we assume that \eqref{eq: compatibility condition} holds and manage to prove the existence of $h$. Fix a positive constant
\begin{eq} \label{eq: c}
c\triangleq\min\left\{\frac{\alpha_1}{2}\comma\frac{\alpha_2}{2}\comma\frac{\frac{g(a_2)-f(b_1)}{a_2-b_1}-f'(b_1)}{a_2-b_1}\comma\frac{g'(a_2)-\frac{g(a_2)-f(b_1)}{a_2-b_1}}{a_2-b_1}\right\}.
\end{eq}
Then it's easy to see that there exists a positive constant $\delta\in\left(0\comma\frac{a_2-b_1}{2}\right)$ satisfying:
\begin{eq} \label{eq: delta}
\left\{ \begin{lgathered}
\inf_{[a_1-\delta\comma b_1+\delta]} f''\geqslant c\comma\quad\inf_{[a_2-\delta\comma b_2+\delta]} g''\geqslant c\comma \\
\sup_{[a_1-\delta\comma b_1+\delta]} f''\leqslant \sup_{[a_1\comma b_1]} f''+1\comma\quad\sup_{[a_2-\delta\comma b_2+\delta]} g''\leqslant \sup_{[a_2\comma b_2]} g''+1\comma \\
\frac{g'(a_2-\delta)-f'(b_1+\delta)}{a_2-b_1-2\delta}>c\comma \\
\frac{2\left(\frac{g(a_2)-f(b_1+\delta)}{a_2-(b_1+\delta)}-f'(b_1+\delta)\right)}{a_2-(b_1+\delta)}\geqslant\frac c2+\frac{\frac{g(a_2)-f(b_1)}{a_2-b_1}-f'(b_1)}{a_2-b_1}>c\comma \\
\frac{2\left(g'(a_2-\delta)-\frac{g(a_2-\delta)-f(b_1)}{(a_2-\delta)-b_1}\right)}{(a_2-\delta)-b_1}\geqslant\frac c2+\frac{g'(a_2)-\frac{g(a_2)-f(b_1)}{a_2-b_1}}{a_2-b_1}>c.
\end{lgathered} \right. \end{eq}
Choose cut-off functions $\xi\comma\eta\in\mathrm{C}_{\mathrm{c}}^\infty(\mathbb{R})$ such that
\begin{eq} \label{eq: xi, eta}
\left\{ \begin{lgathered}
\xi\bigr|_{[a_1\comma b_1]}\equiv1\comma0\leqslant\xi\leqslant1\comma\supp\xi\subset(a_1-\delta\comma b_1+\delta)\text{;} \\
\eta\bigr|_{[a_2\comma b_2]}\equiv1\comma0\leqslant\eta\leqslant1\comma\supp\eta\subset(a_2-\delta\comma b_2+\delta).
\end{lgathered} \right.
\end{eq}
Define
\begin{al}
\tilde{f}(t)&\triangleq\int_{b_1}^t \int_{b_1}^y \xi(x)\bigl(f''(x)-c\bigr)\dif x\dif y+f(b_1)+f'(b_1)(t-b_1)+\frac c2 (t-b_1)^2\comma \\
\tilde{g}(t)&\triangleq\int_{a_2}^t \int_{a_2}^y \eta(x)\bigl(g''(x)-c\bigr)\dif x\dif y+g(a_2)+g'(a_2)(t-a_2)+\frac c2 (t-a_2)^2.
\end{al}
By \eqref{eq: xi, eta} we have
\begin{eq} \label{eq: tilde f(a_2)<tilde g(a_2)} \begin{aligned}
\tilde{f}(a_2)&=\int_{b_1}^{b_1+\delta} \int_{b_1}^y \xi(x)\bigl(f''(x)-c\bigr)\dif x\dif y+\int_{b_1+\delta}^{a_2} \int_{b_1}^{b_1+\delta} \xi(x)\bigl(f''(x)-c\bigr)\dif x\dif y \\
&\phantom{=}+f(b_1)+f'(b_1)(a_2-b_1)+\frac c2 (a_2-b_1)^2 \\
&\leqslant\int_{b_1}^{b_1+\delta} \int_{b_1}^y \bigl(f''(x)-c\bigr)\dif x\dif y+\int_{b_1+\delta}^{a_2} \int_{b_1}^{b_1+\delta} \bigl(f''(x)-c\bigr)\dif x\dif y \\
&\phantom{=}+f(b_1)+f'(b_1)(a_2-b_1)+\frac c2 (a_2-b_1)^2 \\
&=f(b_1+\delta)+f'(b_1+\delta)\bigl(a_2-(b_1+\delta)\bigr)+\frac c2 \bigl(a_2-(b_1+\delta)\bigr)^2 \\
&<g(a_2)=\tilde{g}(a_2)\comma
\end{aligned} \end{eq}
where the ``$<$'' is due to the fourth line of \eqref{eq: delta}. Similarly, by \eqref{eq: xi, eta} and the fifth line of \eqref{eq: delta}, we have
\begin{eq} \label{eq: tilde g(b_1)<tilde f(b_1)} \begin{aligned}
\tilde{g}(b_1)&\leqslant\int_{a_2-\delta}^{a_2} \int_y^{a_2} \bigl(g''(x)-c\bigr)\dif x\dif y+\int_{b_1}^{a_2-\delta} \int_{a_2-\delta}^{a_2} \bigl(g''(x)-c\bigr)\dif x\dif y \\
&\phantom{=}+g(a_2)+g'(a_2)(b_1-a_2)+\frac c2 (b_1-a_2)^2 \\
&=g(a_2-\delta)-g'(a_2-\delta)(a_2-\delta-b_1)+\frac c2 (a_2-\delta-b_1)^2<f(b_1)=\tilde{f}(b_1).
\end{aligned} \end{eq}
Moreover, it's easy to see that $\tilde{f}\comma\tilde{g}\in\mathrm{C}^\infty(\mathbb{R})$, 
\begin{eq} \label{eq: tilde f=f, tilde g=g}
\tilde{f}\Bigr|_{[a_1\comma b_1]}=f\bigr|_{[a_1\comma b_1]}\comma\quad\tilde{g}\Bigr|_{[a_2\comma b_2]}=g\bigr|_{[a_2\comma b_2]}\comma
\end{eq}
and
\begin{eq} \label{eq: tilde f'', tilde g''}
\tilde{f}''(t)=\xi(t)\bigl(f''(t)-c\bigr)+c\geqslant c\comma\quad\tilde{g}''(t)=\eta(t)\bigl(g''(t)-c\bigr)+c\geqslant c
\end{eq}
for any $t\in\mathbb{R}$ by the first line of \eqref{eq: delta}. As a result,
\begin{eq} \label{eq: varepsilon}
\varepsilon\triangleq\min\left\{\frac{\tilde{f}(b_1)-\tilde{g}(b_1)}{2}\comma\frac{\tilde{g}(a_2)-\tilde{f}(a_2)}{2}\right\}
\end{eq}
is a positive constant and we can let $\rho_{\varepsilon}$ be the smooth function in \lemref{lem: regularization of |t|}. We claim that the function
\begin{eq} \label{eq: h(t)}
h(t)\triangleq\begin{dcases}
\tilde{f}(t)\comma & t\leqslant b_1 \\
\frac{\tilde{f}(t)+\tilde{g}(t)+\rho_{\varepsilon}\bigl(\tilde{f}(t)-\tilde{g}(t)\bigr)}{2}\comma & b_1<t<a_2 \\
\tilde{g}(t)\comma & t\geqslant a_2
\end{dcases} \end{eq}
satisfies all our requirements. In fact, when $t$ is sufficiently close to $b_1+0$, we have $\tilde{f}(t)-\tilde{g}(t)>\varepsilon$ and therefore $h(t)=\tilde{f}(t)$; when $t$ is sufficiently close to $a_2-0$, we have $\tilde{f}(t)-\tilde{g}(t)<-\varepsilon$ and therefore $h(t)=\tilde{g}(t)$. Thus, we have $h\in\mathrm{C}^\infty(\mathbb{R})$. By \eqref{eq: tilde f=f, tilde g=g} and \eqref{eq: tilde f'', tilde g''} we see that
\begin{eq}
h\bigr|_{[a_1\comma b_1]}=\tilde{f}\Bigr|_{[a_1\comma b_1]}=f\bigr|_{[a_1\comma b_1]}\comma\quad h\bigr|_{[a_2\comma b_2]}=\tilde{g}\Bigr|_{[a_2\comma b_2]}=g\bigr|_{[a_2\comma b_2]}\comma
\end{eq}
and
\begin{eq}
h''(t)=\tilde{f}''(t)\geqslant c\comma\forall t\leqslant b_1\text{;} \quad h''(t)=\tilde{g}''(t)\geqslant c\comma\forall t\geqslant a_2.
\end{eq}
Moreover, for any $t\in(b_1\comma a_2)$ there holds
\begin{eq} \begin{aligned}
h''(t)&=\frac12 \Bigl(\tilde{f}''(t)+\tilde{g}''(t)+\rho_{\varepsilon}''\bigl(\tilde{f}(t)-\tilde{g}(t)\bigr)\bigl(\tilde{f}'(t)-\tilde{g}'(t)\bigr)^2 \\
&\hphantom{=\frac12 \Bigl(}+\rho_{\varepsilon}'\bigl(\tilde{f}(t)-\tilde{g}(t)\bigr)\bigl(\tilde{f}''(t)-\tilde{g}''(t)\bigr)\Bigr) \\
&\geqslant \frac{1+\rho_{\varepsilon}'\bigl(\tilde{f}(t)-\tilde{g}(t)\bigr)}{2}\tilde{f}''(t)+\frac{1-\rho_{\varepsilon}'\bigl(\tilde{f}(t)-\tilde{g}(t)\bigr)}{2}\tilde{g}''(t)\geqslant c.
\end{aligned} \end{eq}
As a result, we have $\inf\limits_{\mathbb{R}} h''\geqslant c>0$. Recalling the definition \eqref{eq: c} of the positive constant $c$, we have justified \eqref{eq: infimum of h''}, while \eqref{eq: supremum of h''} remains to be proved. For any $t\in(b_1\comma a_2)$, by \eqref{eq: xi, eta} and the third line of \eqref{eq: delta} there hold
\begin{eq} \begin{aligned}
\tilde{g}'(t)-\tilde{f}'(t)&=\int_{a_2}^t \eta(x)\bigl(g''(x)-c\bigr)\dif x+g'(a_2)+c(t-a_2) \\
&\phantom{=}-\int_{b_1}^t \xi(x)\bigl(f''(x)-c\bigr)\dif x-f'(b_1)-c(t-b_1) \\
&\geqslant -\int_{a_2-\delta}^{a_2} \bigl(g''(x)-c\bigr)\dif x+g'(a_2)-c(a_2-b_1) \\
&\phantom{=}-\int_{b_1}^{b_1+\delta} \bigl(f''(x)-c\bigr)\dif x-f'(b_1) \\
&=g'(a_2-\delta)-f'(b_1+\delta)-c(a_2-b_1-2\delta)>0 \\
\end{aligned} \end{eq}
and
\begin{eq}
\tilde{g}'(t)-\tilde{f}'(t)\leqslant g'(a_2)+c(t-a_2)-f'(b_1)-c(t-b_1)\leqslant g'(a_2)-f'(b_1).
\end{eq}
The two inequalities above imply
\begin{eq} \label{eq: (tilde g'-tilde f')^2}
\left(\tilde{g}'(t)-\tilde{f}'(t)\right)^2\leqslant\bigl(g'(a_2)-f'(b_1)\bigr)^2
\end{eq}
for any $t\in(b_1\comma a_2)$. On the other hand, by \eqref{eq: varepsilon}, \eqref{eq: tilde f(a_2)<tilde g(a_2)}, \eqref{eq: tilde g(b_1)<tilde f(b_1)}, \eqref{eq: delta} and \eqref{eq: c} we have
\begin{eq} \label{eq: varepsilon, estimate} \begin{aligned}
\varepsilon&\geqslant\frac12\min\Bigl\{f(b_1)-g(a_2-\delta)+g'(a_2-\delta)(a_2-\delta-b_1)-\frac c2 (a_2-\delta-b_1)^2 \comma \\
&\mathrel{\hphantom{\geqslant}}\hphantom{\frac12\min\Bigl\{}g(a_2)-f(b_1+\delta)-f'(b_1+\delta)(a_2-b_1-\delta)-\frac c2 (a_2-b_1-\delta)^2\Bigr\} \\
&\geqslant\frac{(a_2-b_1-\delta)^2}{4}\min\left\{\frac{g'(a_2)-\frac{g(a_2)-f(b_1)}{a_2-b_1}}{a_2-b_1}-\frac c2\comma\frac{\frac{g(a_2)-f(b_1)}{a_2-b_1}-f'(b_1)}{a_2-b_1}-\frac c2\right\} \\
&\geqslant\frac{(a_2-b_1-\delta)^2}{8(a_2-b_1)}\min\left\{g'(a_2)-\frac{g(a_2)-f(b_1)}{a_2-b_1}\comma\frac{g(a_2)-f(b_1)}{a_2-b_1}-f'(b_1)\right\} \\
&\geqslant\frac{a_2-b_1}{32}\min\left\{g'(a_2)-\frac{g(a_2)-f(b_1)}{a_2-b_1}\comma\frac{g(a_2)-f(b_1)}{a_2-b_1}-f'(b_1)\right\}.
\end{aligned} \end{eq}
Combining \eqref{eq: (tilde g'-tilde f')^2}, \eqref{eq: varepsilon, estimate} and \lemref{lem: regularization of |t|}, we obtain
\begin{eq} \begin{aligned}
&\mathrel{\phantom{=}}\rho_{\varepsilon}''\bigl(\tilde{f}(t)-\tilde{g}(t)\bigr)\bigl(\tilde{f}'(t)-\tilde{g}'(t)\bigr)^2 \\
&\leqslant\frac{32M\bigl(g'(a_2)-f'(b_1)\bigr)^2}{(a_2-b_1)\min\left\{\frac{g(a_2)-f(b_1)}{a_2-b_1}-f'(b_1)\comma g'(a_2)-\frac{g(a_2)-f(b_1)}{a_2-b_1}\right\}}
\end{aligned} \end{eq}
for any $t\in(b_1\comma a_2)$. Furthermore, by \eqref{eq: c} and the second line of \eqref{eq: delta} there hold
\begin{al}
\tilde{f}''(t)&=\xi(t)\bigl(f''(t)-c\bigr)+c\leqslant\max\left\{c\comma\sup_{[a_1-\delta\comma b_1+\delta]} f''\right\}\leqslant\sup_{[a_1\comma b_1]} f''+1\comma \\
\tilde{g}''(t)&=\eta(t)\bigl(g''(t)-c\bigr)+c\leqslant\max\left\{c\comma\sup_{[a_2-\delta\comma b_2+\delta]} g''\right\}\leqslant\sup_{[a_2\comma b_2]} g''+1
\end{al}
for any $t\in\mathbb{R}$. As a result, we have
\begin{eq}
h''(t)=\tilde{f}''(t)\leqslant\sup_{[a_1\comma b_1]} f''+1\comma\forall t\leqslant b_1\text{;}\quad h''(t)=\tilde{g}''(t)\leqslant\sup_{[a_2\comma b_2]} g''+1\comma\forall t\geqslant a_2\text{;}
\end{eq}
and
\begin{eq} \begin{aligned}
h''(t)&=\frac{1+\rho_{\varepsilon}'\bigl(\tilde{f}(t)-\tilde{g}(t)\bigr)}{2}\tilde{f}''(t)+\frac{1-\rho_{\varepsilon}'\bigl(\tilde{f}(t)-\tilde{g}(t)\bigr)}{2}\tilde{g}''(t) \\
&\hphantom{=}+\frac12\rho_{\varepsilon}''\bigl(\tilde{f}(t)-\tilde{g}(t)\bigr)\bigl(\tilde{f}'(t)-\tilde{g}'(t)\bigr)^2 \\
&\leqslant\frac{16M\bigl(g'(a_2)-f'(b_1)\bigr)^2}{(a_2-b_1)\min\left\{\frac{g(a_2)-f(b_1)}{a_2-b_1}-f'(b_1)\comma g'(a_2)-\frac{g(a_2)-f(b_1)}{a_2-b_1}\right\}} \\
&\phantom{=}+1+\max\left\{\sup_{[a_1\comma b_1]}f''\comma\sup_{[a_2\comma b_2]}g''\right\}\comma\forall t\in(b_1\comma a_2).
\end{aligned} \end{eq}
We have justified \eqref{eq: supremum of h''} and therefore our claim that the function $h$ satisfies all our requirements.
\end{proof}

The following proposition is an analogue of \propref{prop: glue two smooth, strictly convex functions defined on disjointed intervals} for the case of non-strictly convex functions. Its proof is similar to that of \propref{prop: glue two smooth, strictly convex functions defined on disjointed intervals} and therefore put into \appref{app: Proof of the gluing lemma of smooth convex functions defined on disjointed intervals}.

\begin{prop} \label{prop: glue two smooth convex functions defined on disjointed intervals}
Let $a_1$, $b_1$, $a_2$, $b_2$ be real constants such that $a_1<b_1<a_2<b_2$, $k\geqslant 2$ be a positive integer or $\infty$, $f\in\mathrm{C}^k\bigl([a_1\comma b_1]\bigr)$ and $g\in\mathrm{C}^k\bigl([a_2\comma b_2]\bigr)$ be convex functions. Assume that both $f$ and $g$ can extend to be $\mathrm{C}^k$ convex functions defined on $\mathbb{R}$. If
\begin{eq} \label{eq: compatibility condition, convex case}
f_-'(b_1)<\frac{g(a_2)-f(b_1)}{a_2-b_1}<g_+'(a_2)\comma
\end{eq}
then there exists a convex function $h\in\mathrm{C}^k(\mathbb{R})$ satisfying
\begin{eq}
\left.h\vphantom{h_{[a_1\comma b_1]}}\right|_{[a_1\comma b_1]}=f\comma\quad \left.h\vphantom{h_{[a_2\comma b_2]}}\right|_{[a_2\comma b_2]}=g\comma
\end{eq}
and
\begin{eq} \label{eq: supremum of h'', convex case} \begin{aligned}
\sup_{\mathbb{R}}h''&\leqslant\frac{4M\bigl(g_+'(a_2)-f_-'(b_1)\bigr)^2}{(a_2-b_1)\min\left\{\frac{g(a_2)-f(b_1)}{a_2-b_1}-f_-'(b_1)\comma g_+'(a_2)-\frac{g(a_2)-f(b_1)}{a_2-b_1}\right\}} \\
&\phantom{=}+1+\max\left\{\sup_{[a_1\comma b_1]}f''\comma\sup_{[a_2\comma b_2]}g''\right\}\comma
\end{aligned} \end{eq}
where $M$ is the positive constant in \lemref{lem: regularization of |t|}.
\end{prop}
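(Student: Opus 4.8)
The plan is to repeat the proof of \propref{prop: glue two smooth, strictly convex functions defined on disjointed intervals} almost verbatim, modifying only the regularization of $f$ and $g$ so that it survives the loss of strict convexity. What makes the merely‑convex case no harder is that the gluing device used there --- forming $h=\tfrac12\bigl(\tilde f+\tilde g+\rho_{\varepsilon}(\tilde f-\tilde g)\bigr)$ on the overlap interval $(b_1\comma a_2)$ --- preserves convexity on its own: for $t\in(b_1\comma a_2)$ one computes
\[ h''=\frac{1+\rho_{\varepsilon}'(\tilde f-\tilde g)}{2}\,\tilde f''+\frac{1-\rho_{\varepsilon}'(\tilde f-\tilde g)}{2}\,\tilde g''+\frac12\,\rho_{\varepsilon}''(\tilde f-\tilde g)\bigl(\tilde f'-\tilde g'\bigr)^2\comma \]
and since $|\rho_{\varepsilon}'|\leqslant1$ and $\rho_{\varepsilon}''\geqslant0$ by \lemref{lem: regularization of |t|}, the right‑hand side is $\geqslant0$ whenever $\tilde f''\geqslant0$ and $\tilde g''\geqslant0$. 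So no positive lower bound on the second derivatives is needed, and the compatibility condition \eqref{eq: compatibility condition, convex case} will be used only to force $\tilde f$ and $\tilde g$ to ``cross''.

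First I would reduce to $k=\infty$ (the $\mathrm{C}^k$ construction being identical) and, using the hypothesis, extend $f$ and $g$ to convex functions in $\mathrm{C}^\infty(\mathbb{R})$, so that $f''\geqslant0$ and $g''\geqslant0$ everywhere. Fix a small $\delta\in\bigl(0\comma\tfrac{a_2-b_1}{2}\bigr)$ and cut‑off functions $\xi\comma\eta\in\mathrm{C}_{\mathrm{c}}^\infty(\mathbb{R})$ valued in $[0\comma1]$ with $\xi\equiv1$ on $[a_1\comma b_1]$, $\supp\xi\subset(a_1-\delta\comma b_1+\delta)$, $\eta\equiv1$ on $[a_2\comma b_2]$, $\supp\eta\subset(a_2-\delta\comma b_2+\delta)$. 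In place of the ``$f''-c$'' term I would take $\tilde f(t)\triangleq\int_{b_1}^t\int_{b_1}^y\xi(x)f''(x)\dif x\dif y+f(b_1)+f'(b_1)(t-b_1)$ and define $\tilde g$ symmetrically (double integral of $\eta g''$ based at $a_2$, plus $g(a_2)+g'(a_2)(t-a_2)$). Then $\tilde f\comma\tilde g\in\mathrm{C}^\infty(\mathbb{R})$ are convex, with $\tilde f''=\xi f''\geqslant0$ and $\tilde g''=\eta g''\geqslant0$ (here the convexity of the extensions is used), they agree with $f$ on $[a_1\comma b_1]$ and with $g$ on $[a_2\comma b_2]$, and --- the crucial gain over the strict case --- $\tilde f$ is affine on $[b_1+\delta\comma{+}\infty)$ with slope in $\bigl[f'(b_1)\comma f'(b_1+\delta)\bigr]$, while $\tilde g$ is affine on $({-}\infty\comma a_2-\delta]$ with slope in $\bigl[g'(a_2-\delta)\comma g'(a_2)\bigr]$. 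Consequently, as $\delta\to0$, $\tilde f(a_2)\to f(b_1)+f'(b_1)(a_2-b_1)$ and $\tilde g(b_1)\to g(a_2)-g'(a_2)(a_2-b_1)$, and \eqref{eq: compatibility condition, convex case} says exactly that these limits are strictly below $g(a_2)=\tilde g(a_2)$ and $f(b_1)=\tilde f(b_1)$ respectively.

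Choosing $\delta$ small, I would therefore obtain the quantitative crossing $\tilde f(b_1)-\tilde g(b_1)\geqslant\tfrac{a_2-b_1}{2}\bigl(g_+'(a_2)-\tfrac{g(a_2)-f(b_1)}{a_2-b_1}\bigr)$ and $\tilde g(a_2)-\tilde f(a_2)\geqslant\tfrac{a_2-b_1}{2}\bigl(\tfrac{g(a_2)-f(b_1)}{a_2-b_1}-f_-'(b_1)\bigr)$, along with $\sup_{[a_1-\delta\comma b_1+\delta]}f''\leqslant\sup_{[a_1\comma b_1]}f''+1$ and its analogue for $g''$ (by continuity of $f''\comma g''$), and $\bigl|\tilde f'-\tilde g'\bigr|\leqslant g_+'(a_2)-f_-'(b_1)$ on $(b_1\comma a_2)$ (there $\tilde f'\in[f'(b_1)\comma f'(b_1+\delta)]$, $\tilde g'\in[g'(a_2-\delta)\comma g'(a_2)]$, and $f'(b_1)<g'(a_2)$ by \eqref{eq: compatibility condition, convex case}). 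Then, just as in the strict case, I would put $\varepsilon\triangleq\tfrac12\min\bigl\{\tilde f(b_1)-\tilde g(b_1)\comma\tilde g(a_2)-\tilde f(a_2)\bigr\}>0$, take $\rho_{\varepsilon}$ from \lemref{lem: regularization of |t|}, and define $h$ by the formula \eqref{eq: h(t)} with these $\tilde f$, $\tilde g$, $\varepsilon$. Since $\tilde f-\tilde g>\varepsilon$ just to the right of $b_1$ and $<-\varepsilon$ just to the left of $a_2$, the three pieces of \eqref{eq: h(t)} glue to all orders, so $h\in\mathrm{C}^\infty(\mathbb{R})$ with $h=f$ on $[a_1\comma b_1]$ and $h=g$ on $[a_2\comma b_2]$, while convexity comes from the displayed formula for $h''$ together with $h''=\tilde f''\geqslant0$ on $({-}\infty\comma b_1]$ and $h''=\tilde g''\geqslant0$ on $[a_2\comma{+}\infty)$. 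For the bound \eqref{eq: supremum of h'', convex case}: on $(b_1\comma a_2)$, $|\rho_{\varepsilon}'|\leqslant1$ controls the first two terms of $h''$ by $\max\{\sup\tilde f''\comma\sup\tilde g''\}\leqslant1+\max\{\sup_{[a_1\comma b_1]}f''\comma\sup_{[a_2\comma b_2]}g''\}$, while $\rho_{\varepsilon}''\leqslant\tfrac{M}{\varepsilon}$, the bound on $\bigl|\tilde f'-\tilde g'\bigr|$, and the lower bound $\varepsilon\geqslant\tfrac{a_2-b_1}{4}\min\bigl\{g_+'(a_2)-\tfrac{g(a_2)-f(b_1)}{a_2-b_1}\comma\tfrac{g(a_2)-f(b_1)}{a_2-b_1}-f_-'(b_1)\bigr\}$ control the last term, which gives \eqref{eq: supremum of h'', convex case}.

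The main obstacle --- the part I would actually write out carefully --- is the simultaneous choice of $\delta$: one must check that a single $\delta>0$ makes the two quantitative crossing inequalities, the two second‑derivative estimates for $f$ and $g$, and the derivative bound on $(b_1\comma a_2)$ all hold. Each holds once $\delta$ is sufficiently small, since $\tilde f$, $\tilde g$ and their first derivatives converge, as $\delta\to0$, to the affine extensions of $f$ and $g$ through $\bigl(b_1\comma f(b_1)\bigr)$ with slope $f'(b_1)$ and through $\bigl(a_2\comma g(a_2)\bigr)$ with slope $g'(a_2)$; as there are only finitely many such thresholds, their intersection is again an interval $(0\comma\delta_0)$. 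This argument replaces the explicit estimate \eqref{eq: varepsilon, estimate} of the strict case (which exploited the curvature margin $c$, no longer available), and it is the only place where \eqref{eq: compatibility condition, convex case} genuinely enters; everything else is a line‑by‑line transcription of the proof of \propref{prop: glue two smooth, strictly convex functions defined on disjointed intervals}.
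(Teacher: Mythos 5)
Your proof is correct and follows essentially the same route as the paper's: the same mollified functions $\tilde f,\tilde g$ built from the double integrals of $\xi f''$ and $\eta g''$, the same gluing via $\frac12\bigl(\tilde f+\tilde g+\rho_{\varepsilon}(\tilde f-\tilde g)\bigr)$, and the same way of estimating $h''$. The only (minor) difference is in how $\delta$ is fixed: the paper lists explicit inequalities in \eqref{eq: delta, convex case}, whereas you argue softly via the $\delta\to0$ limits of $\tilde f,\tilde g$ and their slopes; your bookkeeping even yields $\varepsilon\geqslant\frac{a_2-b_1}{4}\min\{\cdots\}$ and hence the constant $2M$, a small improvement over the stated $4M$.
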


\begin{rmk}
See e.g. \cite[p.~6]{Azagra2019} for when the assumption ``both $f$ and $g$ can extend to be $\mathrm{C}^k$ convex functions defined on $\mathbb{R}$'' in \propref{prop: glue two smooth convex functions defined on disjointed intervals} holds.
\end{rmk}

We call \propref{prop: glue two smooth, strictly convex functions defined on disjointed intervals} the gluing lemma of smooth, strictly convex functions defined on disjointed intervals, and \propref{prop: glue two smooth convex functions defined on disjointed intervals} the gluing lemma of smooth convex functions defined on disjointed intervals. Now we prove a gluing lemma of smooth, radial, strictly plurisubharmonic functions.

\begin{thm} \label{thm: gluing lemma of smooth, radial, strictly plurisubharmonic functions}
Let $a_1$, $b_1$, $a_2$, $b_2$ be positive constants such that $a_1<b_1<a_2<b_2$, $n\in\mathbb{N}$, $k\geqslant 2$ be a positive integer or $\infty$, $f\in\mathrm{C}^k\bigl([a_1\comma b_1]\bigr)$ and $g\in\mathrm{C}^k\bigl([a_2\comma b_2]\bigr)$. Assume that $f\bigl(|\bm z|^2\bigr)$ and $g\bigl(|\bm z|^2\bigr)$ are strictly plurisubharmonic functions defined on $\bigl\{\bm z\in\mathbb{C}^n\bigm|a_1\leqslant\left|\bm z\right|^2\leqslant b_1\bigr\}$ and $\bigl\{\bm z\in\mathbb{C}^n\bigm|a_2\leqslant\left|\bm z\right|^2\leqslant b_2\bigr\}$ respectively. Then there exists a function $h\in\mathrm{C}^k\bigl([a_1\comma{+}\infty)\bigr)$ such that
\begin{eq} \label{eq: h=f, h=g}
\left.h\vphantom{h_{[a_1\comma b_1]}}\right|_{[a_1\comma b_1]}=f\comma\left.h\vphantom{h_{[a_2\comma b_2]}}\right|_{[a_2\comma b_2]}=g\comma
\end{eq}
and $h\bigl(|\bm z|^2\bigr)$ is a strictly plurisubharmonic function on $\bigl\{\bm z\in\mathbb{C}^n\bigm|\left|\bm z\right|^2\geqslant a_1\bigr\}$ if and only if
\begin{eq} \label{eq: compatibility condition, strictly plurisubharmonic case}
b_1f_-'(b_1)<\frac{g(a_2)-f(b_1)}{\log a_2-\log b_1}<a_2g_+'(a_2).
\end{eq}
Moreover, if \eqref{eq: compatibility condition, strictly plurisubharmonic case} holds, the function $h$ can be chosen so that for any $\bm z\in\mathbb{C}^n\colon b_1<|\bm z|^2<a_2$,
\begin{eq} \label{eq: supremum of h'', strictly plurisubharmonic case} \begin{aligned}
&\mathrel{\phantom{=}}\det\biggl(\mathrm{D}_{\mathbb{C}}^2\Bigl(h\bigl(|\bm z|^2\bigr)\Bigr)\biggr) \\
&\leqslant \frac{a_2^{n-1}\bigl(g_+'(a_2)\bigr)^{n-1}}{b_1^{2n}} \\
&\phantom{=}\cdot\Biggl(\frac{16M\bigl(a_2g_+'(a_2)-b_1f_-'(b_1)\bigr)^2}{(\log a_2-\log b_1)\min\left\{\frac{g(a_2)-f(b_1)}{\log a_2-\log b_1}-b_1f_-'(b_1)\comma a_2g_+'(a_2)-\frac{g(a_2)-f(b_1)}{\log a_2-\log b_1}\right\}} \\
&\phantom{=\cdot\Biggl(}+1+\max\left\{\sup_{[\log a_1\comma\log b_1]}F''\comma\sup_{[\log a_2\comma\log b_2]}G''\right\}\Biggr)\comma
\end{aligned} \end{eq}
where $M$ is the positive constant in \lemref{lem: regularization of |t|} and the functions $F$ and $G$ are defined in \eqref{eq: F(t)} and \eqref{eq: G(t)} below.
\end{thm}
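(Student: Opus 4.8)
The plan is to transfer the whole problem to the real line via the substitution $t=\mathrm{e}^s$ and then invoke \propref{prop: glue two smooth, strictly convex functions defined on disjointed intervals}. The starting point is the elementary computation that, for a $\mathrm{C}^2$ function $\phi$ of one real variable, the complex Hessian of $\phi(|\bm z|^2)$ at a point with $|\bm z|^2=t$ has eigenvalues $\phi'(t)$ (with multiplicity $n-1$) and $\phi'(t)+t\phi''(t)$ (with multiplicity $1$); hence $\phi(|\bm z|^2)$ is strictly plurisubharmonic there if and only if $\phi'(t)>0$ and $\phi'(t)+t\phi''(t)>0$. Setting $\Phi(s)\triangleq\phi(\mathrm{e}^s)$ and differentiating, $\Phi'(s)=t\phi'(t)$ and $\Phi''(s)=t\bigl(\phi'(t)+t\phi''(t)\bigr)$, so --- as $t>0$ --- these two inequalities are equivalent to $\Phi'(s)>0$ and $\Phi''(s)>0$. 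Thus $\phi(|\bm z|^2)$ is strictly plurisubharmonic on an annulus $\{c\le|\bm z|^2\le d\}$ precisely when $\Phi$ is strictly increasing and strictly convex on $[\log c,\log d]$.

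I then set $F(t)\triangleq f(\mathrm{e}^t)$ on $[\log a_1,\log b_1]$ and $G(t)\triangleq g(\mathrm{e}^t)$ on $[\log a_2,\log b_2]$ --- the functions $F,G$ referred to in the statement. By the above, $F',F''>0$ and $G',G''>0$ on their respective (compact) intervals, so $\min F''>0$ and $\min G''>0$ and \propref{prop: glue two smooth, strictly convex functions defined on disjointed intervals} applies to $F,G$ on $[\log a_1,\log b_1]$ and $[\log a_2,\log b_2]$. The chain rule gives $F_-'(\log b_1)=b_1f_-'(b_1)$, $G_+'(\log a_2)=a_2g_+'(a_2)$, $F(\log b_1)=f(b_1)$ and $G(\log a_2)=g(a_2)$, so the compatibility condition \eqref{eq: compatibility condition} of that proposition, instantiated for $F,G$ on $[\log a_1,\log b_1],[\log a_2,\log b_2]$, reads exactly \eqref{eq: compatibility condition, strictly plurisubharmonic case}. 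This settles necessity: if an $h$ with the stated properties exists, then $H(s)\triangleq h(\mathrm{e}^s)$ is $\mathrm{C}^k$ with $H'>0$ and $H''>0$ on $[\log a_1,{+}\infty)$, agrees with $F$ on $[\log a_1,\log b_1]$ and with $G$ on $[\log a_2,\log b_2]$, and is in particular strictly convex on $[\log b_1,\log a_2]$; the mean value theorem then puts its secant slope over that interval strictly between the endpoint derivatives, which after translating back is \eqref{eq: compatibility condition, strictly plurisubharmonic case}.

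For sufficiency, assuming \eqref{eq: compatibility condition, strictly plurisubharmonic case} I apply \propref{prop: glue two smooth, strictly convex functions defined on disjointed intervals} to obtain $H\in\mathrm{C}^k(\mathbb{R})$ with $H|_{[\log a_1,\log b_1]}=F$, $H|_{[\log a_2,\log b_2]}=G$, $\inf_{\mathbb{R}}H''>0$, together with the quantitative bound on $\sup_{\mathbb{R}}H''$ that it provides. Since $H''>0$, the derivative $H'$ is increasing, and $H'=F'>0$ on $[\log a_1,\log b_1]$, so $H'>0$ on all of $[\log a_1,{+}\infty)$. Then $h(t)\triangleq H(\log t)$, which is $\mathrm{C}^k$ for $t>0$ and in particular on $[a_1,{+}\infty)$, satisfies \eqref{eq: h=f, h=g}, and by the first paragraph (with $\phi=h$, $\Phi=H$) the function $h(|\bm z|^2)$ is strictly plurisubharmonic on $\{|\bm z|^2\ge a_1\}$. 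For the final estimate, the complex Hessian of $h(|\bm z|^2)$ has determinant $\bigl(h'(t)\bigr)^{n-1}\bigl(h'(t)+th''(t)\bigr)$, and since $h'(t)=H'(\log t)/t$ and $h'(t)+th''(t)=H''(\log t)/t$ this equals $\bigl(H'(\log t)\bigr)^{n-1}H''(\log t)\,t^{-n}$; for $b_1<t<a_2$ I bound $H'(\log t)\le H'(\log a_2)=a_2g_+'(a_2)$ by monotonicity of $H'$, $H''(\log t)\le\sup_{\mathbb{R}}H''$ by the bound from \propref{prop: glue two smooth, strictly convex functions defined on disjointed intervals} (whose four terms turn, under the substitution, into the parenthetical factor of \eqref{eq: supremum of h'', strictly plurisubharmonic case}), and $t$ from below by $b_1$, which gives \eqref{eq: supremum of h'', strictly plurisubharmonic case}.

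The only genuine idea is the change of variables: once one notices that $t=\mathrm{e}^s$ converts ``radial and strictly plurisubharmonic'' into ``strictly convex and strictly increasing'', the theorem becomes essentially a corollary of \propref{prop: glue two smooth, strictly convex functions defined on disjointed intervals}. The rest --- checking that $H'>0$ and not merely $H''>0$ (so that strict plurisubharmonicity, not just Monge-Amp\`ere positivity, is preserved), carrying the compatibility inequality through the chain rule, and pushing the Hessian-determinant estimate through the substitution --- is routine bookkeeping and is where most of the writing, but none of the difficulty, lies.
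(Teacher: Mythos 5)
Your proof is correct and essentially identical to the paper's: both arguments substitute $s=\log t$, observe that the two eigenvalue conditions for strict plurisubharmonicity become $H'>0$ and $H''>0$, reduce the problem to \propref{prop: glue two smooth, strictly convex functions defined on disjointed intervals} applied to $F=f\circ\exp$ and $G=g\circ\exp$, note that $H'$ stays positive because it is increasing and starts out equal to $F'(\log a_1)>0$, and set $h=H\circ\log$. The necessity direction via the mean value theorem and the transfer of the compatibility condition through the chain rule are also exactly the paper's steps.

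Two small remarks. First, the equivalence ``$\phi(|\bm z|^2)$ strictly psh $\iff \phi'>0$ and $\phi'+t\phi''>0$'' is only valid for $n\geqslant2$; for $n=1$ the eigenvalue $\phi'$ has multiplicity zero, so $\phi'>0$ is not forced by the hypotheses and your derivation of $H'>0$ from $F'>0$ would not be available (nor is it needed there). The paper explicitly restricts to $n\geqslant2$ ``for simplicity''; you should say the same or treat $n=1$ separately. Second, the bound your computation actually yields is $a_2^{n-1}\bigl(g_+'(a_2)\bigr)^{n-1}\sup_{\mathbb{R}}H''/b_1^{n}$, since $t^n>b_1^n$ for $t>b_1$, whereas the displayed estimate in the theorem has $b_1^{2n}$ in the denominator; these coincide only when $b_1=1$. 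The paper's own line $\det=(H')^{n-1}H''/|\bm z|^{2n}$, with $|\bm z|^{2n}=(|\bm z|^2)^n=t^n$, gives $b_1^{n}$ as well, so the $b_1^{2n}$ in the statement appears to be a typo (harmless in \exref{ex: L^infty estimates fail when r leqslant n-1}, where $b_1<1$ makes the stated bound weaker). Your computation is the correct one, but you should not assert without comment that it ``gives'' \eqref{eq: supremum of h'', strictly plurisubharmonic case} as written.
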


\begin{proof}
For simplicity we only prove the case when $n\geqslant 2$. We may as well assume that $f\comma g\in\mathrm{C}^k(\mathbb{R})$. Straightforward calculations show that
\begin{eq}
\mathrm{D}_{j\overline k}\Bigl(f\bigl(|\bm z|^2\bigr)\Bigr)=f'\bigl(|\bm z|^2\bigr)\delta_{jk}+f''\bigl(|\bm z|^2\bigr)\overline{z}_jz_k
\end{eq}
and all eigenvalues of $\mathrm{D}_{\mathbb{C}}^2\Bigl(f\bigl(|\bm z|^2\bigr)\Bigr)$ are
\begin{eq} \label{eq: eigenvalues of D_C^2(f(|z|^2))}
f'\bigl(|\bm z|^2\bigr)\mbox{ (with multiplicity $n-1$) and }f'\bigl(|\bm z|^2\bigr)+f''\bigl(|\bm z|^2\bigr)|\bm z|^2.
\end{eq}
Define
\begin{eq} \label{eq: F(t)}
F(t)\triangleq f(\mathrm{e}^t)\comma t\in[\log a_1\comma\log b_1]
\end{eq}
and
\begin{eq} \label{eq: G(t)}
G(t)\triangleq g(\mathrm{e}^t)\comma t\in[\log a_2\comma\log b_2].
\end{eq}
It's easy to see that ``$f\bigl(|\bm z|^2\bigr)$ is a strictly plurisubharmonic function on $\bigl\{\bm z\in\mathbb{C}^n\bigm|a_1\leqslant\left|\bm z\right|^2\leqslant b_1\bigr\}$'' is equivalent to ``$F'(t)>0$ and $F''(t)>0$ for any $t\in[\log a_1\comma\log b_1]$'', and ``$g\bigl(|\bm z|^2\bigr)$ is a strictly plurisubharmonic function on $\bigl\{\bm z\in\mathbb{C}^n\bigm|a_2\leqslant\left|\bm z\right|^2\leqslant b_2\bigr\}$'' is equivalent to ``$G'(t)>0$ and $G''(t)>0$ for any $t\in[\log a_2\comma\log b_2]$''. Moreover, \eqref{eq: compatibility condition, strictly plurisubharmonic case} is equivalent to
\begin{eq} \label{eq: compatibility condition, strictly plurisubharmonic case, equivalent form}
F'(\log b_1)<\frac{G(\log a_2)-F(\log b_1)}{\log a_2-\log b_1}<G'(\log a_2).
\end{eq}
When \eqref{eq: compatibility condition, strictly plurisubharmonic case} holds, by \propref{prop: glue two smooth, strictly convex functions defined on disjointed intervals} there exists a function $H\in\mathrm{C}^k(\mathbb{R})$ satisfying
\begin{eq}
H\bigr|_{[\log a_1\comma\log b_1]}=F\comma H\bigr|_{[\log a_2\comma\log b_2]}=G\comma \inf_{\mathbb{R}}H''>0\comma
\end{eq}
and
\begin{eq} \begin{aligned}
\sup_{\mathbb{R}}H''&\leqslant\frac{16M\bigl(a_2g'(a_2)-b_1f'(b_1)\bigr)^2}{(\log a_2-\log b_1)\min\left\{\frac{g(a_2)-f(b_1)}{\log a_2-\log b_1}-b_1f'(b_1)\comma a_2g'(a_2)-\frac{g(a_2)-f(b_1)}{\log a_2-\log b_1}\right\}} \\
&\phantom{=}+1+\max\left\{\sup_{[\log a_1\comma\log b_1]}F''\comma\sup_{[\log a_2\comma\log b_2]}G''\right\}\comma
\end{aligned} \end{eq}
where $M$ is the positive constant in \lemref{lem: regularization of |t|}. Since $F'(\log a_1)>0$, we have
\begin{eq}
\inf_{[\log a_1\comma{+}\infty)}H'=H'(\log a_1)>0.
\end{eq}
As a result, the function
\begin{eq}
h(t)\triangleq H(\log t)\comma t\in[a_1\comma{+}\infty)
\end{eq}
satisfies all our requirements including \eqref{eq: supremum of h'', strictly plurisubharmonic case} since
\begin{eq} \begin{aligned}
\det\biggl(\mathrm{D}_{\mathbb{C}}^2\Bigl(h\bigl(|\bm z|^2\bigr)\Bigr)\biggr)&=\Bigl(h'\bigl(|\bm z|^2\bigr)\Bigr)^{n-1}\Bigl(h'\bigl(|\bm z|^2\bigr)+h''\bigl(|\bm z|^2\bigr)|\bm z|^2\Bigr) \\
&=\frac{1}{|\bm z|^{2n}}\Bigl(H'\bigl(\log|\bm z|^2\bigr)\Bigr)^{n-1}H''\bigl(\log|\bm z|^2\bigr).
\end{aligned} \end{eq}
On the other hand, when there exists a function $h\in\mathrm{C}^k\bigl([a_1\comma{+}\infty)\bigr)$ satisfying \eqref{eq: h=f, h=g} and that $h\bigl(|\bm z|^2\bigr)$ is a strictly plurisubharmonic function on $\bigl\{\bm z\in\mathbb{C}^n\bigm|\left|\bm z\right|^2\geqslant a_1\bigr\}$, we see that the function
\begin{eq}
H(t)\triangleq h(\mathrm{e}^t)\comma t\in[\log a_1\comma{+}\infty)
\end{eq}
satisfies
\begin{eq}
H\bigr|_{[\log a_1\comma\log b_1]}=F\comma H\bigr|_{[\log a_2\comma\log b_2]}=G\comma \mbox{and } \inf_{[\log a_1\comma\log b_2]}H''>0.
\end{eq}
Then it's easy to prove \eqref{eq: compatibility condition, strictly plurisubharmonic case, equivalent form}, or equivalently \eqref{eq: compatibility condition, strictly plurisubharmonic case}.
\end{proof}

\begin{rmk}
One can prove an analogue of \thmref{thm: gluing lemma of smooth, radial, strictly plurisubharmonic functions} for the case of non-strictly plurisubharmonic functions. Its proof is similar to that of \thmref{thm: gluing lemma of smooth, radial, strictly plurisubharmonic functions}, with ``\propref{prop: glue two smooth, strictly convex functions defined on disjointed intervals}'' replaced by ``\propref{prop: glue two smooth convex functions defined on disjointed intervals}''.
\end{rmk}

At the end of this section, we use \thmref{thm: gluing lemma of smooth, radial, strictly plurisubharmonic functions} to construct an explicit example to show that the $\mathrm{L}^\infty$ estimates in \thmref{thm: L^infty estimates on Kahler manifolds} and \thmref{thm: L^infty estimates on Hermitian manifolds} may fail when $r\leqslant n-1$. The idea of our construction originates from \cite{Guo2022a} and \cite{Guedj2024a}. We note that the right-hand function $\varphi_\varepsilon$ of the equation \eqref{eq: varphi_varepsilon} and the solution $u_\varepsilon$ to \eqref{eq: varphi_varepsilon} in our example are smooth without singularities.

\begin{ex} \label{ex: L^infty estimates fail when r leqslant n-1}
Let $\bm\omega_{\bm g}$ denote the K\"ahler form of the Fubini-Study metric on $\mathbb{C}\mathrm{P}^n$ ($n\geqslant2$). Fix a point
\begin{eq}
\bm z_0=\bigl[(1\comma0\comma\cdots\comma0)\bigr]\in\mathbb{C}\mathrm{P}^n
\end{eq}
and choose the standard local holomorphic coordinate system $(U\comma\bm\rho\mbox{; }z^j)$ containing $\bm z_0$. Note that $\bm\rho(\bm z_0)=\bm0$, $\bm\rho(U)=\mathbb{C}^n$ and
\begin{eq}
(\bm\rho^{-1})^{*}\bigl(\bm\omega_{\bm g}\bigr|_{U}\bigr)=\mathrm{i}\partial\overline\partial\log\bigl(1+|\bm z|^2\bigr)=\frac{\mathrm{i}}{1+|\bm z|^2}\left(\delta_{jk}-\frac{\overline{z}_jz_k}{1+|\bm z|^2}\right)\dif z^j\wedge\dif\overline{z}^k.
\end{eq}
For any $\varepsilon\in\left(0\comma\frac{1}{16}\right)$, we want to construct a smooth function $u_{\varepsilon}$ on $\mathbb{C}\mathrm{P}^n$ such that $u_{\varepsilon}\in\mathrm{C}_{\mathrm c}^\infty(U)$, $\bm\omega_{\bm g}+\mathrm{i}\partial\overline\partial u_{\varepsilon}>\bm0$, $\lim\limits_{\varepsilon\to0+0} u_{\varepsilon}(\bm z_0)={-}\infty$ and
\begin{eq}
\left\|\det\left(\bm{\omega}_{\bm g}^{-1}(\bm\omega_{\bm g}+\mathrm{i}\partial\overline\partial u_{\varepsilon})\right)\right\|_{\mathrm{L}^1(\log\mathrm{L})^n(\log\log\mathrm{L})^{n-1}\left(\mathbb{C}\mathrm{P}^n\comma\bm\omega_{\bm g}^n\right)}\leqslant C
\end{eq}
for a positive constant $C$ independent of $\varepsilon$. For this purpose, it suffices to construct a smooth, strictly plurisubharmonic function $v_{\varepsilon}$ on $\bm\rho(U)=\mathbb{C}^n$ such that $\lim\limits_{\varepsilon\to0+0} v_{\varepsilon}(\bm 0)={-}\infty$,
\begin{eq}
v_{\varepsilon}(\bm z)=\log\bigl(1+|\bm z|^2\bigr)\comma\forall\bm z\in\mathbb{C}^n\colon |\bm z|\geqslant 1\comma
\end{eq}
and
\begin{eq}
\left\|\det\left(\mathrm{D}_{\mathbb{C}}^2 v_{\varepsilon}\right)\right\|_{\mathrm{L}^1(\log\mathrm{L})^n(\log\log\mathrm{L})^{n-1}\left(\mathrm{B}_1(\bm 0)\comma\dif V_{\mathbb{C}^n}\right)}\leqslant C
\end{eq}
for a positive constant $C$ independent of $\varepsilon$. If we could construct such a function $v_{\varepsilon}$, then the function
\begin{eq} \label{eq: u_varepsilon}
u_{\varepsilon}(\bm z)\triangleq\begin{dcases}
v_{\varepsilon}\bigl(\bm\rho(\bm z)\bigr)-\log\bigl(1+|\bm\rho(\bm z)|^2\bigr)\comma & \bm z\in U \\
0\comma & \bm z\in\mathbb{C}\mathrm{P}^n\setminus U
\end{dcases} \end{eq}
would satisfy all our requirements. Now define
\begin{eq} \label{eq: f_varepsilon}
f_{\varepsilon}(t)\triangleq-\frac{\log2}{2}\log\Biggl(1+\log\biggl(1+\log\biggl(1+\log\left(1+\frac{1}{t+\varepsilon}\right)\biggr)\biggr)\Biggr)\comma t\in\left[0\comma\frac{1}{16}\right]\comma
\end{eq}
By \corref{cor: properties of f_varepsilon} we see that $f_{\varepsilon}\bigl(|\bm z|^2\bigr)$ is a smooth, strictly plurisubharmonic function on $\overline{\mathrm{B}_{\frac 14}(\bm 0)}\subset\mathbb{C}^n$ and
\begin{eq}
\left\|\det\biggl(\mathrm{D}_{\mathbb{C}}^2\Bigl(f_{\varepsilon}\left(|\bm z|^2\right)\Bigr)\biggr)\right\|_{\mathrm{L}^1(\log\mathrm{L})^n(\log\log\mathrm{L})^{n-1}\left(\mathrm{B}_{\frac14}(\bm 0)\comma\dif V_{\mathbb{C}^n}\right)}\leqslant C_1.
\end{eq}
On the other hand, since
\begin{eq} \begin{aligned}
\frac{1}{16}f_{\varepsilon}'\left(\frac{1}{16}\right)&=\frac{\frac{\log2}{2}\biggl(1+\log\Bigl(1+\log\Bigl(1+\log\left(1+\frac{1}{\frac{1}{16}+\varepsilon}\right)\Bigr)\Bigr)\biggr)^{-1}\left(\frac{1}{\frac{1}{16}+\varepsilon}\right)^2}{16\biggl(1+\log\Bigl(1+\log\left(1+\frac{1}{\frac{1}{16}+\varepsilon}\right)\Bigr)\biggr)\biggl(1+\log\left(1+\frac{1}{\frac{1}{16}+\varepsilon}\right)\biggr)\left(1+\frac{1}{\frac{1}{16}+\varepsilon}\right)} \\
&\leqslant\frac{\log2\cdot\Bigl(1+\log\bigl(1+\log(1+\log9)\bigr)\Bigr)^{-1}}{2\bigl(1+\log(1+\log9)\bigr)(1+\log9)} \\
&\leqslant\frac{1}{12}<\frac14\leqslant\frac{\log2-f_{\varepsilon}\left(\frac{1}{16}\right)}{\log1-\log\frac{1}{16}}
\end{aligned} \end{eq}
and
\begin{eq} \begin{aligned}
\frac{\log2-f_{\varepsilon}\left(\frac{1}{16}\right)}{\log1-\log\frac{1}{16}}&=\frac14+\frac{1}{8}\log\Biggl(1+\log\biggl(1+\log\biggl(1+\log\left(1+\frac{1}{\frac{1}{16}+\varepsilon}\right)\biggr)\biggr)\Biggr) \\
&\leqslant\frac14+\frac{1}{8}\log\Bigl(1+\log\bigl(1+\log(1+\log17)\bigr)\Bigr) \\
&\leqslant\frac14+\frac{1}{8}\log(1+\log3)<\frac38<\frac12\comma
\end{aligned} \end{eq}
by \thmref{thm: gluing lemma of smooth, radial, strictly plurisubharmonic functions} there exists a function $h_{\varepsilon}\in\mathrm{C}^\infty\Bigl(\bigl[\frac{1}{64}\comma{+}\infty\bigr)\Bigr)$ such that $h_{\varepsilon}\bigl(|\bm z|^2\bigr)$ is a smooth, strictly plurisubharmonic function on $\bigl\{\bm z\in\mathbb{C}^n\bigm|\left|\bm z\right|\geqslant \frac18\bigr\}$,
\begin{eq}
h_{\varepsilon}\bigl(|\bm z|^2\bigr)=\begin{dcases}
f_{\varepsilon}\bigl(|\bm z|^2\bigr)\comma & \frac18\leqslant|\bm z|\leqslant\frac14 \\
\log\bigl(1+|\bm z|^2\bigr)\comma & 1\leqslant|\bm z|\leqslant2
\end{dcases} \end{eq}
and for any $\bm z\in\mathbb{C}^n\colon \frac14<|\bm z|<1$, there holds
\begin{eq} \label{eq: supremum of h'', strictly plurisubharmonic case'} \begin{aligned}
&\mathrel{\phantom{=}}\det\biggl(\mathrm{D}_{\mathbb{C}}^2\Bigl(h_{\varepsilon}\bigl(|\bm z|^2\bigr)\Bigr)\biggr) \\
&\leqslant \frac{\left(\frac12\right)^{n-1}}{\left(\frac{1}{16}\right)^{2n}}\Biggl(\frac{16M\left(\frac12-\frac{1}{16}f_{\varepsilon}'\left(\frac{1}{16}\right)\right)^2}{4\log2\cdot\min\left\{\frac{\log2-f_{\varepsilon}\left(\frac{1}{16}\right)}{4\log2}-\frac{1}{16}f_{\varepsilon}'\left(\frac{1}{16}\right)\comma \frac12-\frac{\log2-f_{\varepsilon}\left(\frac{1}{16}\right)}{4\log2}\right\}} \\
&\hphantom{=\frac{\left(\frac12\right)^{n-1}}{\left(\frac{1}{16}\right)^{2n}}\Biggl(}+1+\max\Biggl\{\sup_{t\in\left[\log \frac{1}{64}\comma\log \frac{1}{16}\right]}\bigl(f_{\varepsilon}(\mathrm{e}^t)\bigr)''\comma\sup_{t\in[\log 1\comma\log 4]}\frac{\mathrm{e}^t}{(1+\mathrm{e}^t)^2}\Biggr\}\Biggr)\leqslant C_2.
\end{aligned} \end{eq}
Then it's straightforward to see that the function
\begin{eq}
v_{\varepsilon}(\bm z)\triangleq\begin{dcases}
f_{\varepsilon}\bigl(|\bm z|^2\bigr)\comma & 0\leqslant|\bm z|\leqslant\frac14 \\
h_{\varepsilon}\bigl(|\bm z|^2\bigr)\comma & \frac14<|\bm z|<1 \\
\log\bigl(1+|\bm z|^2\bigr)\comma & |\bm z|\geqslant1
\end{dcases} \end{eq}
is exactly what we want. Recall that for any $\varepsilon\in\left(0\comma\frac{1}{16}\right)$, the function $u_\varepsilon$ defined in \eqref{eq: u_varepsilon} is a smooth admissible solution to the complex Monge-Amp\`ere equation
\begin{eq} \label{eq: varphi_varepsilon}
\Bigl(\det\left(\bm{\omega}_{\bm g}^{-1}(\bm\omega_{\bm g}+\mathrm{i}\partial\overline\partial u_{\varepsilon})\right)\Bigr)^{\frac 1n}=\varphi_\varepsilon
\end{eq}
on $\left(\mathbb{C}\mathrm{P}^n\comma\bm\omega_{\bm g}\right)$, where $\varphi_\varepsilon$ is a smooth function on $\mathbb{C}\mathrm{P}^n$ satisfying
\begin{eq}
\left\|\varphi_\varepsilon^n\right\|_{\mathrm{L}^1(\log\mathrm{L})^n(\log\log\mathrm{L})^{n-1}\left(\mathbb{C}\mathrm{P}^n\comma\bm\omega_{\bm g}^n\right)}\leqslant C
\end{eq}
for a positive constant $C$ independent of $\varepsilon$. However,
\begin{eq}
\lim\limits_{\varepsilon\to0+0} \osc_{\mathbb{C}\mathrm{P}^n} u_{\varepsilon}\geqslant-\lim\limits_{\varepsilon\to0+0} u_{\varepsilon}(\bm z_0)={+}\infty.
\end{eq}
\end{ex}


\appendix

\section[Properties of the function Φ(t; p, q, r) in Section 2]{Properties of the function $\Phi_{p\comma q\comma r}$ in Section~2} \label{app: Properties of the function Phi_pqr in Section 2}

Recall that we have defined the function $\Phi_{p\comma q\comma r}$ in \eqref{eq: Phi_pqr} as
\begin{eq}
\Phi_{p\comma q\comma r}(t)\triangleq t^p\log^q(1+t)\log^r\bigl(1+\log(1+t)\bigr).
\end{eq}
In this appendix, we prove two properties of this function which have been used in \secref{sec: Orlicz spaces and an inequality of Holder-Young type}.

\begin{prop} \label{prop: strong convexity}
Fix $p\geqslant 1$ and $q\comma r\geqslant 0$ such that $q^2+r^2>0$. Then the function
\begin{eq}
\Phi_{p\comma q\comma r}\left(t^{\frac 1p}\right)=t\log^q\left(1+t^{\frac 1p}\right)\log^r\biggl(1+\log\left(1+t^{\frac 1p}\right)\biggr)
\end{eq}
is strictly convex on $(0\comma{+}\infty)$.
\end{prop}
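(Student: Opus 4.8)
The plan is to reduce the claim to a single one‑variable inequality about logarithms and then settle that inequality by elementary estimates. Put $\Psi(t)\triangleq\Phi_{p\comma q\comma r}(t^{1/p})$, $s\triangleq t^{1/p}$, and
\begin{eq*}
g(s)\triangleq\log^q(1+s)\log^r\bigl(1+\log(1+s)\bigr)\comma
\end{eq*}
so that $\Psi(t)=t\,g(t^{1/p})$. Differentiating twice (using $\bigl(t^{1/p}\bigr)'=\frac1p t^{1/p-1}$) gives the identity
\begin{eq*}
\Psi''(t)=\frac{1}{p^2}\,t^{1/p-1}\bigl((p+1)g'(s)+s\,g''(s)\bigr)\comma
\end{eq*}
so it suffices to prove $(p+1)g'(s)+s\,g''(s)>0$ for every $s>0$. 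Writing $L\triangleq\log(1+s)$, $M\triangleq\log(1+L)$ and $\phi\triangleq(\log g)'=q\frac{L'}{L}+r\frac{M'}{M}$, we note that for $s>0$ one has $g(s)>0$ and $\phi(s)>0$ (since $q\comma r\geq0$ are not both zero), hence $g'=g\phi>0$, so that
\begin{eq*}
(p+1)g'+s\,g''=\bigl(2g'+s\,g''\bigr)+(p-1)g'\geq 2g'+s\,g''\qquad(p\geq1).
\end{eq*}
Thus the whole statement reduces to the case $p=1$: it is enough to show $2g'(s)+s\,g''(s)>0$ on $(0\comma{+}\infty)$, equivalently that $s\mapsto s^2g'(s)$ is strictly increasing there.

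Since $g'=g\phi$ and $g''=g(\phi^2+\phi')$, we have $2g'+s\,g''=g\cdot\bigl(2\phi+s\phi^2+s\phi'\bigr)$ with $g>0$, so the task is to prove $2\phi+s\phi^2+s\phi'>0$. I would expand this using $sL'=\frac{s}{1+s}$, $L''=-(L')^2$, $M'=\frac{L'}{1+L}$ and $M''=-\frac{(L')^2(2+L)}{(1+L)^2}$, then regroup the outcome into a pure‑$q$ term, a pure‑$r$ term, and a manifestly nonnegative cross term $2qrs\,\frac{L'M'}{LM}$. A short computation shows the pure‑$q$ term equals
\begin{eq*}
\frac{qL'}{L^2(1+s)}\bigl((2+s)L+(q-1)s\bigr)\comma
\end{eq*}
and, with $\theta\triangleq\frac{s}{1+s}\in(0\comma1)$, the pure‑$r$ term equals
\begin{eq*}
\frac{rL'}{(1+L)^2M^2}\Bigl(M\bigl(2(1-\theta)+L(2-\theta)\bigr)+(r-1)\theta\Bigr).
\end{eq*}

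It then remains to check that each of these two terms is $\geq0$, and $>0$ when the corresponding coefficient is positive. For the pure‑$q$ term, since $q\geq0$ gives $(q-1)s\geq-s$, it suffices that $(2+s)L>s$, and indeed $(2+s)\log(1+s)>(1+s)\log(1+s)>s$ for $s>0$ by the elementary bound $\log(1+s)>\frac{s}{1+s}$. For the pure‑$r$ term, using $2-\theta>1$, $1-\theta=\frac{1}{1+s}$ and $(r-1)\theta\geq-\theta$ reduces positivity to
\begin{eq*}
\log\bigl(1+\log(1+s)\bigr)\cdot\bigl(2+(1+s)\log(1+s)\bigr)>s\comma\qquad s>0\comma
\end{eq*}
which I would prove by the substitution $L=\log(1+s)$ (so $s=e^L-1$) and the bound $\log(1+L)>\frac{L}{1+L}$, reducing it to the polynomial-exponential inequality $3L+1+e^L(L^2-L-1)>0$ for $L>0$ — immediate once $L^2-L-1\geq0$, and a short monotonicity check (the left side having derivative $3+e^L(L+2)(L-1)$) on the remaining range $L\in\bigl(0\comma\tfrac{1+\sqrt{5}}{2}\bigr)$. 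Adding the three groups, at least one of which is strictly positive because $q^2+r^2>0$, gives $2\phi+s\phi^2+s\phi'>0$, hence $\Psi''>0$ on $(0\comma{+}\infty)$ and $\Psi$ is strictly convex.

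The main obstacle I expect is the pure‑$r$ term: when $0\leq r<1$ the contribution of $M''$ carries the wrong sign and must be absorbed by the $\phi$‑linear and $\phi^2$ pieces, and — unlike the pure‑$q$ term, where the crude bound $(1+s)\log(1+s)>s$ already suffices — here the balance is tight near $s=0$ and one genuinely needs the estimate above involving the triple composition $\log\bigl(1+\log(1+s)\bigr)$.
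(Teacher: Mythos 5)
Your proposal is correct and follows essentially the same route as the paper's proof: reduce to $p=1$ via $p\geqslant1$, split the resulting expression into a pure-$q$ piece, a pure-$r$ piece, and a nonnegative $qr$ cross term, and dispose of the first two via elementary logarithmic inequalities — yours, $(2+s)\log(1+s)>s$ and $\log\bigl(1+\log(1+s)\bigr)\bigl(2+(1+s)\log(1+s)\bigr)>s$, are harmless weakenings of the paper's $(1+t)\log(1+t)\geqslant t$ and $\bigl(1+(1+t)\log(1+t)\bigr)\log\bigl(1+\log(1+t)\bigr)\geqslant t$. The only loose end is your ``short monotonicity check'' on $3L+1+\mathrm{e}^{L}(L^{2}-L-1)$ for $L\in(0,1)$; it closes cleanly, e.g.\ by using $\mathrm{e}^{L}\leqslant\frac{1}{1-L}$ on $[0,1)$ to get $\mathrm{e}^{L}(L+2)(1-L)\leqslant L+2<3$, so that the derivative $3+\mathrm{e}^{L}(L+2)(L-1)$ is positive there.
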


\begin{proof}
By straightforward calculations we obtain
\begin{eq}
\frac{\dif{}^2}{\dif t^2} \Phi_{p\comma q\comma r}\left(t^{\frac 1p}\right)=\frac{\log ^{q-2}\left(1+t^{\frac{1}{p}}\right)\log^{r-2}\left(1+\log\left(1+t^{\frac{1}{p}}\right)\right)}{p^2t^{1-\frac{1}{p}}\left(1+t^{\frac{1}{p}}\right)^2\left(1+\log\left(1+t^{\frac{1}{p}}\right)\right)^2}f\left(t^{\frac 1p}\right)\comma
\end{eq}
where
\begin{eq} \label{eq: f(t)} \begin{aligned}
f(t)&\triangleq(pt+p+1)\log^3(1+t)\log\bigl(1+\log(1+t)\bigr)\Bigl(q\log\bigl(1+\log(1+t)\bigr)+r\Bigr) \\
&+\log^2(1+t)\Bigl(q\bigl((q-1)t+2p(1+t)+2\bigr)\log^2\bigl(1+\log(1+t)\bigr) \\
&\hphantom{+\log^2(1+t)\Bigl(} +r\bigl((2q-1)t+p(1+t)+1\bigr)\log\bigl(1+\log(1+t)\bigr)+r(r-1)t\Bigr) \\
&+q\log(1+t)\log\bigl(1+\log(1+t)\bigr) \\
&\phantom{+}\cdot\Bigl(\bigl(2(q-1)t+p(1+t)+1\bigr)\log\bigl(1+\log(1+t)\bigr)+2rt\Bigr) \\
&+q(q-1)t\log^2\bigl(1+\log(1+t)\bigr).
\end{aligned} \end{eq}
We only need to prove that $f(t)>0$ for any $t>0$. Since $p\geqslant 1$, we have
\begin{eq} \label{eq: f(t), estimate} \begin{aligned}
f(t)&\geqslant(t+2)\log^3(1+t)\log\bigl(1+\log(1+t)\bigr)\Bigl(q\log\bigl(1+\log(1+t)\bigr)+r\Bigr) \\
&+\log^2(1+t)\Bigl(q\bigl((q+1)t+4\bigr)\log^2\bigl(1+\log(1+t)\bigr) \\
&\hphantom{+\log^2(1+t)\Bigl(} +r(2qt+2)\log\bigl(1+\log(1+t)\bigr)+r(r-1)t\Bigr) \\
&+q\log(1+t)\log\bigl(1+\log(1+t)\bigr)\Bigl(\bigl((2q-1)t+2\bigr)\log\bigl(1+\log(1+t)\bigr)+2rt\Bigr) \\
&+q(q-1)t\log^2\bigl(1+\log(1+t)\bigr).
\end{aligned} \end{eq}
Removing the two non-negative items containing ``$qr$'' on the right-hand side of \eqref{eq: f(t), estimate}, we obtain
\begin{eq} \label{eq: f(t), further estimate} \begin{aligned}
f(t)&\geqslant(t+2)\log^3(1+t)\log\bigl(1+\log(1+t)\bigr)\Bigl(q\log\bigl(1+\log(1+t)\bigr)+r\Bigr) \\
&+\log^2(1+t)\Bigl(q\bigl((q+1)t+4\bigr)\log^2\bigl(1+\log(1+t)\bigr) \\
&\hphantom{+\log^2(1+t)\Bigl(} +2r\log\bigl(1+\log(1+t)\bigr)+r(r-1)t\Bigr) \\
&+q\log(1+t)\log^2\bigl(1+\log(1+t)\bigr)\bigl((2q-1)t+2\bigr) \\
&+q(q-1)t\log^2\bigl(1+\log(1+t)\bigr)=g(t)q+h(t)r\comma
\end{aligned} \end{eq}
where
\begin{al}
g(t)&\triangleq \bigl(1+\log(1+t)\bigr)^2\log^2\bigl(1+\log(1+t)\bigr)\bigl((q-1)t+(2+t)\log(1+t)\bigr)\comma \\
h(t)&\triangleq \log^2(1+t)\Bigl((r-1)t+\bigl(2+(2+t)\log(1+t)\bigr)\log\bigl(1+\log(1+t)\bigr)\Bigr).
\end{al}
Recall two elementary inequalities:
\begin{ga}
(1+t)\log(1+t)\geqslant t (t\geqslant0)\comma \\
\bigl(1+(1+t)\log(1+t)\bigr)\log\bigr(1+\log(1+t)\bigr)\geqslant t (t\geqslant0)\comma
\end{ga}
where the second inequality can be proved by differentiation and using the first inequality. Thus, for any $t>0$ we have
\begin{al}
g(t)&\geqslant \bigl(1+\log(1+t)\bigr)^2\log^2\bigl(1+\log(1+t)\bigr)\log(1+t)>0\comma \\
h(t)&\geqslant \log^2(1+t)\bigl(1+\log(1+t)\bigr)\log\bigl(1+\log(1+t)\bigr)>0.
\end{al} 
Since $q^2+r^2>0$, by \eqref{eq: f(t), further estimate} we see that $f(t)>0$ for any $t>0$.
\end{proof}

\begin{cor} \label{cor: convexity}
Fix $p\geqslant 1$ and $q\comma r\geqslant 0$ such that $(p-1)^2+q^2+r^2>0$. Then the function $\Phi_{p\comma q\comma r}$ is strictly convex on $(0\comma{+}\infty)$.
\end{cor}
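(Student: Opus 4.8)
The plan is to deduce the corollary from \propref{prop: strong convexity} after splitting into two cases according to whether $q$ and $r$ both vanish. If $q=r=0$, then the hypothesis $(p-1)^2+q^2+r^2>0$ forces $p>1$, and $\Phi_{p\comma 0\comma 0}(t)=t^p$ is plainly strictly convex on $(0\comma{+}\infty)$; this case needs nothing further.

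So assume $q^2+r^2>0$. Then \propref{prop: strong convexity} applies and tells us that $\Psi(t)\triangleq\Phi_{p\comma q\comma r}\bigl(t^{1/p}\bigr)$ is strictly convex on $(0\comma{+}\infty)$. I would recover the strict convexity of $\Phi_{p\comma q\comma r}$ itself from that of $\Psi$ by a composition argument. Write $\phi(s)\triangleq s^p$, so that $\Phi_{p\comma q\comma r}(s)=\Psi\bigl(\phi(s)\bigr)$. Two soft observations suffice: since $p\geqslant1$, the map $\phi$ is convex and strictly increasing on $[0\comma{+}\infty)$; and $\Psi$ is non-decreasing on $(0\comma{+}\infty)$, because in the product $\Psi(t)=t\log^q\bigl(1+t^{1/p}\bigr)\log^r\bigl(1+\log(1+t^{1/p})\bigr)$ the first factor is positive and strictly increasing while the other two factors are positive and non-decreasing. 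Then for $s_1\comma s_2\in(0\comma{+}\infty)$ with $s_1<s_2$ and $\theta\in(0\comma1)$,
\begin{al*}
\Phi_{p\comma q\comma r}\bigl(\theta s_1+(1-\theta)s_2\bigr)&=\Psi\bigl(\phi(\theta s_1+(1-\theta)s_2)\bigr)\leqslant\Psi\bigl(\theta\phi(s_1)+(1-\theta)\phi(s_2)\bigr) \\
&<\theta\Psi\bigl(\phi(s_1)\bigr)+(1-\theta)\Psi\bigl(\phi(s_2)\bigr)=\theta\Phi_{p\comma q\comma r}(s_1)+(1-\theta)\Phi_{p\comma q\comma r}(s_2)\comma
\end{al*}
where the weak inequality uses the convexity of $\phi$ together with the monotonicity of $\Psi$, and the strict inequality uses the strict convexity of $\Psi$ on $(0\comma{+}\infty)$ together with the fact that $\phi(s_1)<\phi(s_2)$ are two distinct points of $(0\comma{+}\infty)$ whose convex combination again lies in $(0\comma{+}\infty)$. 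This gives the strict convexity of $\Phi_{p\comma q\comma r}$ on $(0\comma{+}\infty)$.

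Since all the analytic content is already packaged in \propref{prop: strong convexity}, I do not expect any genuine obstacle here. The only point deserving a moment's care is to run the composition step using only points of the \emph{open} half-line $(0\comma{+}\infty)$ — so that strict convexity of $\Psi$ is invoked purely at interior points and no behaviour at $t=0$ enters — which is automatic once one recalls that strict convexity on $(0\comma{+}\infty)$ is by definition a statement about convex combinations of distinct points of that open set.
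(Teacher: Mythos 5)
Your proof is correct, and the overall strategy is the same as the paper's: reduce everything to \propref{prop: strong convexity} via the identity $\Phi_{p\comma q\comma r}(s)=\Psi(s^p)$. The only divergence is in how strict convexity is transported from $\Psi$ to $\Phi_{p\comma q\comma r}$. The paper differentiates twice and observes directly that
\begin{eq*}
\Phi_{p\comma q\comma r}''(t)=p^2t^{2p-2}\Psi''(t^p)+p(p-1)t^{p-2}\Psi'(t^p)>0
\end{eq*}
for $t>0$, using $p\geqslant1$ and the monotonicity of $\Psi$; you instead argue at the level of convex combinations, invoking the classical fact that the composition of a non-decreasing strictly convex function with a strictly increasing convex function is strictly convex. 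Both are correct and essentially equivalent; your version has the minor advantage of not requiring any differentiation, and you also spell out the degenerate case $q=r=0$ (where $p>1$ and $t^p$ is trivially strictly convex), which the paper silently folds into ``we may as well assume $q^2+r^2>0$''. No gaps.
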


\begin{proof}
We may as well assume that $q^2+r^2>0$. By \propref{prop: strong convexity},
\begin{eq}
\Psi(t)\triangleq\Phi_{p\comma q\comma r}\left(t^{\frac 1p}\right)
\end{eq}
is strictly convex on $(0\comma{+}\infty)$. Then we have
\begin{eq}
\Phi_{p\comma q\comma r}''(t)=p^2t^{2p-2}\Psi''(t^p)+p(p-1)t^{p-2}\Psi'(t^p)>0
\end{eq}
for any $t>0$ since $p\geqslant 1$ and $\Psi$ is increasing on $[0\comma{+}\infty)$.
\end{proof}


\section{Proof of the gluing lemma of smooth convex functions defined on disjointed intervals} \label{app: Proof of the gluing lemma of smooth convex functions defined on disjointed intervals}

In this appendix, we prove the gluing lemma of smooth convex functions defined on disjointed intervals, i.e. \propref{prop: glue two smooth convex functions defined on disjointed intervals}.

\begin{proof}
Without loss of generality, we assume that $k=\infty$ and $f\comma g$ are both smooth convex functions defined on $\mathbb{R}$. It's easy to see that there exists a positive constant $\delta\in\left(0\comma\frac{a_2-b_1}{2}\right)$ satisfying:
\begin{eq} \label{eq: delta, convex case}
\left\{ \begin{lgathered}
\sup_{[a_1-\delta\comma b_1+\delta]} f''\leqslant \sup_{[a_1\comma b_1]} f''+1\comma\quad\sup_{[a_2-\delta\comma b_2+\delta]} g''\leqslant \sup_{[a_2\comma b_2]} g''+1\comma \\
f'(b_1+\delta)<g'(a_2-\delta)\comma \\
2\left(\frac{g(a_2)-f(b_1+\delta)}{a_2-(b_1+\delta)}-f'(b_1+\delta)\right)\geqslant\frac{g(a_2)-f(b_1)}{a_2-b_1}-f'(b_1)>0\comma \\
2\left(g'(a_2-\delta)-\frac{g(a_2-\delta)-f(b_1)}{(a_2-\delta)-b_1}\right)\geqslant g'(a_2)-\frac{g(a_2)-f(b_1)}{a_2-b_1}>0.
\end{lgathered} \right. \end{eq}
Choose cut-off functions $\xi\comma\eta\in\mathrm{C}_{\mathrm{c}}^\infty(\mathbb{R})$ such that
\begin{eq} \label{eq: xi, eta, convex case}
\left\{ \begin{lgathered}
\xi\bigr|_{[a_1\comma b_1]}\equiv1\comma0\leqslant\xi\leqslant1\comma\supp\xi\subset(a_1-\delta\comma b_1+\delta)\text{;} \\
\eta\bigr|_{[a_2\comma b_2]}\equiv1\comma0\leqslant\eta\leqslant1\comma\supp\eta\subset(a_2-\delta\comma b_2+\delta).
\end{lgathered} \right.
\end{eq}
Define
\begin{al}
\tilde{f}(t)&\triangleq\int_{b_1}^t \int_{b_1}^y \xi(x)f''(x)\dif x\dif y+f(b_1)+f'(b_1)(t-b_1)\comma \\
\tilde{g}(t)&\triangleq\int_{a_2}^t \int_{a_2}^y \eta(x)g''(x)\dif x\dif y+g(a_2)+g'(a_2)(t-a_2).
\end{al}
By \eqref{eq: xi, eta, convex case} and the last two lines of \eqref{eq: delta, convex case}, we have
\begin{eq} \label{eq: tilde f(a_2)<tilde g(a_2), convex case} \begin{aligned}
\tilde{f}(a_2)&\leqslant\int_{b_1}^{b_1+\delta} \int_{b_1}^y f''(x)\dif x\dif y+\int_{b_1+\delta}^{a_2} \int_{b_1}^{b_1+\delta} f''(x)\dif x\dif y+f(b_1)+f'(b_1)(a_2-b_1) \\
&=f(b_1+\delta)+f'(b_1+\delta)\bigl(a_2-(b_1+\delta)\bigr)<g(a_2)=\tilde{g}(a_2)\comma
\end{aligned} \end{eq}
and
\begin{eq} \label{eq: tilde g(b_1)<tilde f(b_1), convex case} \begin{aligned}
\tilde{g}(b_1)&\leqslant\int_{a_2-\delta}^{a_2} \int_y^{a_2} g''(x)\dif x\dif y+\int_{b_1}^{a_2-\delta} \int_{a_2-\delta}^{a_2} g''(x)\dif x\dif y+g(a_2)+g'(a_2)(b_1-a_2) \\
&=g(a_2-\delta)-g'(a_2-\delta)(a_2-\delta-b_1)<f(b_1)=\tilde{f}(b_1).
\end{aligned} \end{eq}
Moreover, it's easy to see that $\tilde{f}\comma\tilde{g}\in\mathrm{C}^\infty(\mathbb{R})$, 
\begin{eq} \label{eq: tilde f=f, tilde g=g, convex case}
\tilde{f}\Bigr|_{[a_1\comma b_1]}=f\bigr|_{[a_1\comma b_1]}\comma\quad\tilde{g}\Bigr|_{[a_2\comma b_2]}=g\bigr|_{[a_2\comma b_2]}\comma
\end{eq}
and
\begin{eq} \label{eq: tilde f'', tilde g'', convex case}
\tilde{f}''(t)=\xi(t)f''(t)\geqslant 0\comma\quad\tilde{g}''(t)=\eta(t)g''(t)\geqslant 0
\end{eq}
for any $t\in\mathbb{R}$. As a result,
\begin{eq} \label{eq: varepsilon, convex case}
\varepsilon\triangleq\min\left\{\frac{\tilde{f}(b_1)-\tilde{g}(b_1)}{2}\comma\frac{\tilde{g}(a_2)-\tilde{f}(a_2)}{2}\right\}
\end{eq}
is a positive constant and we can let $\rho_{\varepsilon}$ be the smooth function in \lemref{lem: regularization of |t|}. We claim that the function
\begin{eq}
h(t)\triangleq\begin{dcases}
\tilde{f}(t)\comma & t\leqslant b_1 \\
\frac{\tilde{f}(t)+\tilde{g}(t)+\rho_{\varepsilon}\bigl(\tilde{f}(t)-\tilde{g}(t)\bigr)}{2}\comma & b_1<t<a_2 \\
\tilde{g}(t)\comma & t\geqslant a_2
\end{dcases} \end{eq}
satisfies all our requirements. In fact, when $t$ is sufficiently close to $b_1+0$, we have $\tilde{f}(t)-\tilde{g}(t)>\varepsilon$ and therefore $h(t)=\tilde{f}(t)$; when $t$ is sufficiently close to $a_2-0$, we have $\tilde{f}(t)-\tilde{g}(t)<-\varepsilon$ and therefore $h(t)=\tilde{g}(t)$. Thus, we have $h\in\mathrm{C}^\infty(\mathbb{R})$. By \eqref{eq: tilde f=f, tilde g=g, convex case} and \eqref{eq: tilde f'', tilde g'', convex case} we see that
\begin{eq}
h\bigr|_{[a_1\comma b_1]}=\tilde{f}\Bigr|_{[a_1\comma b_1]}=f\bigr|_{[a_1\comma b_1]}\comma\quad h\bigr|_{[a_2\comma b_2]}=\tilde{g}\Bigr|_{[a_2\comma b_2]}=g\bigr|_{[a_2\comma b_2]}\comma
\end{eq}
and
\begin{eq}
h''(t)=\tilde{f}''(t)\geqslant 0\comma\forall t\leqslant b_1\text{;} \quad h''(t)=\tilde{g}''(t)\geqslant 0\comma\forall t\geqslant a_2.
\end{eq}
Moreover, for any $t\in(b_1\comma a_2)$ there holds
\begin{eq} \begin{aligned}
h''(t)&=\frac12 \Bigl(\tilde{f}''(t)+\tilde{g}''(t)+\rho_{\varepsilon}''\bigl(\tilde{f}(t)-\tilde{g}(t)\bigr)\bigl(\tilde{f}'(t)-\tilde{g}'(t)\bigr)^2 \\
&\hphantom{=\frac12 \Bigl(}+\rho_{\varepsilon}'\bigl(\tilde{f}(t)-\tilde{g}(t)\bigr)\bigl(\tilde{f}''(t)-\tilde{g}''(t)\bigr)\Bigr) \\
&\geqslant \frac{1+\rho_{\varepsilon}'\bigl(\tilde{f}(t)-\tilde{g}(t)\bigr)}{2}\tilde{f}''(t)+\frac{1-\rho_{\varepsilon}'\bigl(\tilde{f}(t)-\tilde{g}(t)\bigr)}{2}\tilde{g}''(t)\geqslant 0.
\end{aligned} \end{eq}
As a result, we have $\inf\limits_{\mathbb{R}} h''\geqslant0$, i.e. $h$ is convex on $\mathbb{R}$. Next we prove \eqref{eq: supremum of h'', convex case}. For any $t\in(b_1\comma a_2)$, by \eqref{eq: xi, eta, convex case} and the second line of \eqref{eq: delta, convex case} there hold
\begin{eq} \begin{aligned}
\tilde{g}'(t)-\tilde{f}'(t)&=\int_{a_2}^t \eta(x)g''(x)\dif x+g'(a_2)-\int_{b_1}^t \xi(x)f''(x)\dif x-f'(b_1) \\
&\geqslant -\int_{a_2-\delta}^{a_2} g''(x)\dif x+g'(a_2)-\int_{b_1}^{b_1+\delta} f''(x)\dif x-f'(b_1) \\
&=g'(a_2-\delta)-f'(b_1+\delta)>0 \\
\end{aligned} \end{eq}
and
\begin{eq}
\tilde{g}'(t)-\tilde{f}'(t)\leqslant g'(a_2)-f'(b_1).
\end{eq}
The two inequalities above imply
\begin{eq} \label{eq: (tilde g'-tilde f')^2, convex case}
\left(\tilde{g}'(t)-\tilde{f}'(t)\right)^2\leqslant\bigl(g'(a_2)-f'(b_1)\bigr)^2
\end{eq}
for any $t\in(b_1\comma a_2)$. On the other hand, by \eqref{eq: varepsilon, convex case}, \eqref{eq: tilde f(a_2)<tilde g(a_2), convex case}, \eqref{eq: tilde g(b_1)<tilde f(b_1), convex case} and \eqref{eq: delta, convex case} we have
\begin{eq} \label{eq: varepsilon, estimate, convex case} \begin{aligned}
\varepsilon&\geqslant\frac12\min\Bigl\{f(b_1)-g(a_2-\delta)+g'(a_2-\delta)(a_2-\delta-b_1) \comma \\
&\mathrel{\hphantom{\geqslant}}\hphantom{\frac12\min\Bigl\{}g(a_2)-f(b_1+\delta)-f'(b_1+\delta)(a_2-b_1-\delta)\Bigr\} \\
&\geqslant\frac{a_2-b_1-\delta}{4}\min\left\{g'(a_2)-\frac{g(a_2)-f(b_1)}{a_2-b_1}\comma\frac{g(a_2)-f(b_1)}{a_2-b_1}-f'(b_1)\right\} \\
&\geqslant\frac{a_2-b_1}{8}\min\left\{g'(a_2)-\frac{g(a_2)-f(b_1)}{a_2-b_1}\comma\frac{g(a_2)-f(b_1)}{a_2-b_1}-f'(b_1)\right\}.
\end{aligned} \end{eq}
Combining \eqref{eq: (tilde g'-tilde f')^2, convex case}, \eqref{eq: varepsilon, estimate, convex case} and \lemref{lem: regularization of |t|}, we obtain
\begin{eq} \begin{aligned}
&\mathrel{\phantom{=}}\rho_{\varepsilon}''\bigl(\tilde{f}(t)-\tilde{g}(t)\bigr)\bigl(\tilde{f}'(t)-\tilde{g}'(t)\bigr)^2 \\
&\leqslant\frac{8M\bigl(g'(a_2)-f'(b_1)\bigr)^2}{(a_2-b_1)\min\left\{\frac{g(a_2)-f(b_1)}{a_2-b_1}-f'(b_1)\comma g'(a_2)-\frac{g(a_2)-f(b_1)}{a_2-b_1}\right\}}
\end{aligned} \end{eq}
for any $t\in(b_1\comma a_2)$. Furthermore, by the first line of \eqref{eq: delta, convex case} we have
\begin{al}
\tilde{f}''(t)&=\xi(t)f''(t)\leqslant\sup_{[a_1-\delta\comma b_1+\delta]} f''\leqslant\sup_{[a_1\comma b_1]} f''+1\comma \\
\tilde{g}''(t)&=\eta(t)g''(t)\leqslant\sup_{[a_2-\delta\comma b_2+\delta]} g''\leqslant\sup_{[a_2\comma b_2]} g''+1
\end{al}
for any $t\in\mathbb{R}$. As a result, we have
\begin{eq}
h''(t)=\tilde{f}''(t)\leqslant\sup_{[a_1\comma b_1]} f''+1\comma\forall t\leqslant b_1\text{;}\quad h''(t)=\tilde{g}''(t)\leqslant\sup_{[a_2\comma b_2]} g''+1\comma\forall t\geqslant a_2\text{;}
\end{eq}
and
\begin{eq} \begin{aligned}
h''(t)&=\frac{1+\rho_{\varepsilon}'\bigl(\tilde{f}(t)-\tilde{g}(t)\bigr)}{2}\tilde{f}''(t)+\frac{1-\rho_{\varepsilon}'\bigl(\tilde{f}(t)-\tilde{g}(t)\bigr)}{2}\tilde{g}''(t) \\
&\hphantom{=}+\frac12\rho_{\varepsilon}''\bigl(\tilde{f}(t)-\tilde{g}(t)\bigr)\bigl(\tilde{f}'(t)-\tilde{g}'(t)\bigr)^2 \\
&\leqslant\frac{4M\bigl(g'(a_2)-f'(b_1)\bigr)^2}{(a_2-b_1)\min\left\{\frac{g(a_2)-f(b_1)}{a_2-b_1}-f'(b_1)\comma g'(a_2)-\frac{g(a_2)-f(b_1)}{a_2-b_1}\right\}} \\
&\phantom{=}+1+\max\left\{\sup_{[a_1\comma b_1]}f''\comma\sup_{[a_2\comma b_2]}g''\right\}\comma\forall t\in(b_1\comma a_2).
\end{aligned} \end{eq}
We have justified \eqref{eq: supremum of h'', convex case} and therefore our claim that the function $h$ satisfies all our requirements.
\end{proof}


\section[Properties of the function f_ε in Example 7.7]{Properties of the function $f_{\varepsilon}$ in Example~7.7} \label{app: Properties of the function f_varepsilon in Example 7.7}

Recall that we have defined the function $f_\varepsilon$ in \eqref{eq: f_varepsilon} as
\begin{eq}
f_{\varepsilon}(t)\triangleq-\frac{\log2}{2}\log\Biggl(1+\log\biggl(1+\log\biggl(1+\log\left(1+\frac{1}{t+\varepsilon}\right)\biggr)\biggr)\Biggr)\comma t\in\left[0\comma\frac{1}{16}\right]\comma
\end{eq}
where $a$ is a positive constant independent of $\varepsilon$. In this appendix, we prove some properties of this function which have been used in \exref{ex: L^infty estimates fail when r leqslant n-1}. For simplicity, we will omit the coefficient $\frac{\log2}{2}$ of $f_{\varepsilon}$ and write $f_{\varepsilon}(t)$ as
\begin{eq}
-\log\Biggl(1+\log\biggl(1+\log\biggl(1+\log\left(1+\frac{1}{t+\varepsilon}\right)\biggr)\biggr)\Biggr).
\end{eq}

\begin{prop} \label{prop: properties of f_varepsilon}
Fix $\varepsilon\in\left(0\comma\frac{1}{4}\right]$. Let $f$ denote $f_\varepsilon$.
\begin{enumerate}[(1)]
\item For any $t\in\left[0\comma\frac{1}{4}\right]$, we have $f'(t)>0$ and $f'(t)+tf''(t)>0$.
\item Fix $t_0\in\left(0\comma\frac{1}{4}\right)$. For any $t\in\left[t_0\comma\frac{1}{4}\right]$, we have $f'(t)+tf''(t)\leqslant C$, where $C$ is a positive constant depending only on $\frac{1}{t_0}$.
\item Fix $n\in\mathbb{N}\colon n\geqslant 2$ and let $F(t)$ denote
\begin{eq}
\bigl(f'(t)\bigr)^{n-1}\bigl(f'(t)+tf''(t)\bigr).
\end{eq}
Then we have
\begin{eq} \label{eq: F_L^1 log^nL log^(n-1)logL}
\int_0^{\frac14} t^{n-1}F(t)\log^n\bigl(1+F(t)\bigr)\log^{n-1}\Bigl(1+\log\bigl(1+F(t)\bigr)\Bigr)\dif t\leqslant C\comma
\end{eq}
where $C$ is a positive constant depending only on $n$.
\end{enumerate}
\end{prop}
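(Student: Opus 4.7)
The plan is to let $s = t + \varepsilon$ and introduce the nested logarithms $L_1 = \log(1+1/s)$, $L_2 = \log(1+L_1)$, $L_3 = \log(1+L_2)$, so that $f_\varepsilon(t) = -\log(1+L_3)$ and the chain rule yields
\[
f'(t) = \frac{1}{s(s+1)(1+L_1)(1+L_2)(1+L_3)},
\]
from which $f'(t) > 0$ is immediate. For the second inequality in part~(1), I would use that $f'(t) + tf''(t) = (tf'(t))'$ and attempt to show $tf'(t)$ is strictly increasing on $[0, 1/4]$ by computing $(\log(tf'(t)))'$. Using $L_1'(t) = -1/(s(s+1))$ and $L_{i+1}' = L_i'/(1+L_i)$, multiplying this derivative by $s(s+1)$ collapses it to
\[
\frac{\varepsilon(s+1)}{t} - s + \frac{(1+L_2)(1+L_3) + (1+L_3) + 1}{(1+L_1)(1+L_2)(1+L_3)}.
\]
Since the first term is non-negative, positivity reduces to the algebraic inequality $s(1+L_1)(1+L_2)(1+L_3) < (1+L_2)(1+L_3) + (1+L_3) + 1$ on $s \in (0, 1/2]$. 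This is immediate when $s(1+L_1) \leqslant 1$; in the narrow complementary regime (only for $s$ close to $1/2$) I would verify it by explicit numerical bounds exploiting the monotonicity of $s \mapsto s(1+L_1)$.

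Part~(2) is bookkeeping: on $[t_0, 1/4]$, $s \geqslant t_0$, so every factor in the formulas for $f'$ and $f''$ is uniformly bounded by constants depending only on $t_0$, and differentiating $tf'(t)$ directly gives $f'(t) + tf''(t) \leqslant C(1/t_0)$.

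The heart of the proposition is part~(3). The crucial identity is
\[
t^{n-1} F(t) = (tf'(t))^{n-1} (tf'(t))',
\]
together with the asymptotic bound $F(t) \leqslant C/(s^n L_1^{n+1} L_2^n L_3^n)$, which follows from the formula for $f'(t)$ and from the estimate $(tf'(t))' \leqslant C/(s L_1^2 L_2 L_3)$ already available from the computation in part~(1). These give $\log(1+F(t)) \leqslant C(n) L_1$ and $\log(1 + \log(1+F(t))) \leqslant C L_2$. I would split the integral at $t = \varepsilon$. On $[\varepsilon, 1/4]$, where $s \asymp t$, the integrand is bounded by $C(n)/(s L_1 L_2 L_3^n)$; splitting further at some threshold $s_0$ depending only on $n$, on $[\varepsilon, s_0]$ the successive substitutions $u = \log(1/s)$, $v = \log u$, $w = \log v$ reduce the integral to $\int dw/w^n$, which converges precisely because $n \geqslant 2$, while on $[s_0, 1/4]$ the integrand is uniformly bounded. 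On $[0, \varepsilon]$, $s \in [\varepsilon, 2\varepsilon]$ so $F(t)$ is controlled by $C/\varepsilon^n$ times negative powers of iterated logarithms of $1/\varepsilon$; combined with $\int_0^\varepsilon t^{n-1} \dif t = \varepsilon^n/n$, the contribution is absorbed uniformly in $\varepsilon$ since the $\log$ weights grow only polynomially in $\log(1/\varepsilon)$.

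The main obstacle is part~(3): tracking the asymptotic bound on $F(t)$ uniformly in $\varepsilon$, especially across the transition $t \sim \varepsilon$, and verifying that the exponents $n$ and $n-1$ in the $\log$ weights precisely saturate the convergence threshold $\int \dif w/w^n < \infty$ (valid only for $n \geqslant 2$); this reflects the sharpness of the borderline case $r = n-1$ that this example is designed to exhibit.
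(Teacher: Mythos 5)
Your parts (1) and (2) are essentially sound. Part (1) takes a genuinely different route from the paper: you differentiate $\log(tf')$ and reduce to the algebraic inequality $s(1+L_1)(1+L_2)(1+L_3)<(1+L_2)(1+L_3)+(1+L_3)+1$, with a numerical check near $s=\tfrac12$ (note that monotonicity of $s(1+L_1)$ alone does not settle the complementary regime, since $(1+L_2)(1+L_3)$ moves the other way — you need a short two-sided bound there), whereas the paper bounds the numerator of $f'+tf''$ directly from below using $\log(1+x)\leqslant\sqrt{x}$ and $1-t-\sqrt t\geqslant\tfrac14$. Both work.

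The genuine gap is in part (3): the intermediate estimates $(tf')'\leqslant C/(sL_1^2L_2L_3)$ and hence $F(t)\leqslant C/(s^nL_1^{n+1}L_2^nL_3^n)$ are false with $C$ independent of $\varepsilon$. Your own computation gives
\[
(tf')'=f'\cdot\Bigl(\tfrac{\varepsilon}{s}-\tfrac{t}{s+1}+\tfrac{t}{s(s+1)}Q\Bigr),\qquad 0<Q\leqslant\tfrac{3}{1+L_1},
\]
and at $t=\varepsilon$ (so $s=2\varepsilon$) the bracket equals $\tfrac12+O(\varepsilon)+O(1/L_1)$, so $(tf')'\asymp f'\asymp 1/(sL_1L_2L_3)$, which exceeds your claimed bound by the unbounded factor $L_1\asymp\log(1/\varepsilon)$. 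The correct uniform bound is $(tf')'\leqslant Cf'\bigl(\varepsilon/s+1/(1+L_1)\bigr)$, and the $\varepsilon/s$ term dominates $1/(1+L_1)$ on the whole range $\varepsilon\leqslant s\lesssim\varepsilon\log(1/\varepsilon)$ — a region not absorbed by your split at $t=\varepsilon$. Consequently your integrand bound $C(n)/(sL_1L_2L_3^n)$ on $[\varepsilon,1/4]$ does not hold. The repair is exactly the paper's two-term decomposition $F\leqslant h_1+h_2$ with $h_1\sim(f')^n/(1+L_1)$ and $h_2\sim\varepsilon(f')^n/s$: your iterated-logarithm substitution handles the $h_1$ piece (where $n\geqslant2$ is used), while the $h_2$ piece, after multiplication by $t^{n-1}$ and the two logarithmic weights, is bounded by $C(n)\varepsilon/s^2$, whose integral over $[0,\tfrac14]$ is at most $1$ (equivalently the paper's rescaling $t\mapsto\varepsilon t$ yielding $\int_0^{\infty}(1+t)^{-2}\dif t$). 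With that second term restored your argument goes through.
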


\begin{proof}
(1) For any $t\in\left[0\comma\frac{1}{4}\right]$, straightforward calculations show that
\begin{eq} \label{eq: f'(t)}
f'(t)=\frac{\biggl(1+\log\Bigl(1+\log\Bigl(1+\log\left(1+\frac{1}{t+\varepsilon}\right)\Bigr)\Bigr)\biggr)^{-1}\left(\frac{1}{t+\varepsilon}\right)^2}{\biggl(1+\log\Bigl(1+\log\left(1+\frac{1}{t+\varepsilon}\right)\Bigr)\biggr)\Bigl(1+\log\left(1+\frac{1}{t+\varepsilon}\right)\Bigr)\left(1+\frac{1}{t+\varepsilon}\right)}>0
\end{eq}
and
\begin{eq} \label{eq: f'(t)+tf''(t)} \begin{aligned}
&\mathrel{\phantom{=}}f'(t)+tf''(t) \\
&=\frac{\biggl(1+\log\Bigl(1+\log\Bigl(1+\log\left(1+\frac{1}{t+\varepsilon}\right)\Bigr)\Bigr)\biggr)^{-2}}{\biggl(1+\log\Bigl(1+\log\left(1+\frac{1}{t+\varepsilon}\right)\Bigr)\biggr)^2\Bigl(1+\log\left(1+\frac{1}{t+\varepsilon}\right)\Bigr)^2(1+t+\varepsilon)^2(t+\varepsilon)^2}g(t)\comma
\end{aligned} \end{eq}
where
\begin{eq} \begin{aligned}
g(t)&\triangleq t\Bigl(2+\log\bigl(1+\log(1+s)\bigr)\Bigr) \\
&\phantom{=}+t(1-t-ts)\bigl(1+\log(1+s)\bigr)\Bigl(1+\log\bigl(1+\log(1+s)\bigr)\Bigr)  \\
&\phantom{=}+\varepsilon(1+\varepsilon)(1+s)\bigl(1+\log(1+s)\bigr)\Bigl(1+\log\bigl(1+\log(1+s)\bigr)\Bigr) \\
\end{aligned} \end{eq} 
and
\begin{eq}
s\triangleq\log\left(1+\frac{1}{t+\varepsilon}\right).
\end{eq}
Note that
\begin{eq}
0<s\leqslant(t+\varepsilon)^{-\frac12}
\end{eq}
and therefore
\begin{eq}
g(t)\geqslant t\left(1-t-t^{\frac12}\right)\bigl(1+\log(1+s)\bigr)\Bigl(1+\log\bigl(1+\log(1+s)\bigr)\Bigr)+\varepsilon>0\comma
\end{eq}
where the last ``$>$'' is due to $t\leqslant\frac14$ and $\varepsilon>0$. Thus, we have $f'(t)+tf''(t)>0$.

(2) For any $t\in\left[t_0\comma\frac{1}{4}\right]$, by \eqref{eq: f'(t)+tf''(t)} and $\varepsilon\in\left(0\comma\frac14\right]$ we have
\begin{eq}
f'(t)+tf''(t)\leqslant\frac{\Bigl(1+\log\bigl(1+\log(1+\log3)\bigr)\Bigr)^{-2}}{\bigl(1+\log(1+\log3)\bigr)^2(1+\log3)^2t_0^2}g(t)\comma
\end{eq}
where
\begin{eq} \begin{aligned}
g(t)&\leqslant \frac14 \Bigl(2+\log\bigl(1+\log(1+s)\bigr)\Bigr)+\frac14 \bigl(1+\log(1+s)\bigr)\Bigl(1+\log\bigl(1+\log(1+s)\bigr)\Bigr)  \\
&\phantom{=}+\frac{5}{16}(1+s)\bigl(1+\log(1+s)\bigr)\Bigl(1+\log\bigl(1+\log(1+s)\bigr)\Bigr) \\
\end{aligned} \end{eq} 
and
\begin{eq}
s\triangleq\log\left(1+\frac{1}{t+\varepsilon}\right)\leqslant\frac{1}{t+\varepsilon}\leqslant\frac{1}{t_0}.
\end{eq}
Thus, there holds $f'(t)+tf''(t)\leqslant C$, where $C$ depends only on $\frac{1}{t_0}$.

(3) For any $t\in\left[0\comma\frac{1}{4}\right]$, by \eqref{eq: f'(t)} and \eqref{eq: f'(t)+tf''(t)} there holds
\begin{eq}
F(t)\leqslant\frac{\biggl(1+\log\Bigl(1+\log\Bigl(1+\log\left(1+\frac{1}{t+\varepsilon}\right)\Bigr)\Bigr)\biggr)^{-n-1}}{\biggl(1+\log\Bigl(1+\log\left(1+\frac{1}{t+\varepsilon}\right)\Bigr)\biggr)^{n+1}\Bigl(1+\log\left(1+\frac{1}{t+\varepsilon}\right)\Bigr)^{n+1}(t+\varepsilon)^{n+1}}g(t)\comma
\end{eq}
where
\begin{eq} \begin{aligned}
g(t)&\leqslant 3t\bigl(1+\log(1+s)\bigr)\Bigl(1+\log\bigl(1+\log(1+s)\bigr)\Bigr)  \\
&\phantom{=}+\frac54\varepsilon(1+s)\bigl(1+\log(1+s)\bigr)\Bigl(1+\log\bigl(1+\log(1+s)\bigr)\Bigr)
\end{aligned} \end{eq}
and
\begin{eq}
s\triangleq\log\left(1+\frac{1}{t+\varepsilon}\right).
\end{eq}
Thus, we have
\begin{eq}
F(t)\leqslant h_1(t)+h_2(t)\comma
\end{eq}
where
\begin{al}
h_1(t)&\triangleq\frac{3\biggl(1+\log\Bigl(1+\log\Bigl(1+\log\left(1+\frac{1}{t+\varepsilon}\right)\Bigr)\Bigr)\biggr)^{-n}\left(\frac{1}{t+\varepsilon}\right)^{n}}{\biggl(1+\log\Bigl(1+\log\left(1+\frac{1}{t+\varepsilon}\right)\Bigr)\biggr)^{n}\Bigl(1+\log\left(1+\frac{1}{t+\varepsilon}\right)\Bigr)^{n+1}}\comma \\
h_2(t)&\triangleq\frac{\frac54\varepsilon\biggl(1+\log\Bigl(1+\log\Bigl(1+\log\left(1+\frac{1}{t+\varepsilon}\right)\Bigr)\Bigr)\biggr)^{-n}\left(\frac{1}{t+\varepsilon}\right)^{n+1}}{\biggl(1+\log\Bigl(1+\log\left(1+\frac{1}{t+\varepsilon}\right)\Bigr)\biggr)^{n}\Bigl(1+\log\left(1+\frac{1}{t+\varepsilon}\right)\Bigr)^{n}}.
\end{al}
Since $\frac{1}{t+\varepsilon}\geqslant2$, we see that
\begin{eq}
F(t)\leqslant C_1\left(\frac{1}{t+\varepsilon}\right)^n\comma
\end{eq}
where $C_1>1$ is a constant depending only on $n$. As a result, we obtain
\begin{eq} \label{eq: F_L^1 log^nL log^(n-1)logL, estimate} \begin{aligned}
&\mathrel{\phantom{=}}\int_0^{\frac14} t^{n-1}F(t)\log^n\bigl(1+F(t)\bigr)\log^{n-1}\Bigl(1+\log\bigl(1+F(t)\bigr)\Bigr)\dif t \\
&\leqslant C_2\int_0^{\frac14} t^{n-1}\bigl(h_1(t)+h_2(t)\bigr)\log^n\left(1+\frac{1}{t+\varepsilon}\right)\log^{n-1}\Biggl(1+\log\left(1+\frac{1}{t+\varepsilon}\right)\Biggr)\dif t.
\end{aligned} \end{eq}
Note that
\begin{eq} \label{eq: h_1(t), estimate} \begin{aligned}
&\mathrel{\phantom{=}}\int_0^{\frac14} t^{n-1}h_1(t)\log^n\left(1+\frac{1}{t+\varepsilon}\right)\log^{n-1}\Biggl(1+\log\left(1+\frac{1}{t+\varepsilon}\right)\Biggr)\dif t  \\
&\leqslant 3\int_0^{\frac14} \frac{\biggl(1+\log\Bigl(1+\log\Bigl(1+\log\left(1+\frac{1}{t+\varepsilon}\right)\Bigr)\Bigr)\biggr)^{-n}\frac{1}{t+\varepsilon}}{\biggl(1+\log\Bigl(1+\log\left(1+\frac{1}{t+\varepsilon}\right)\Bigr)\biggr)\Bigl(1+\log\left(1+\frac{1}{t+\varepsilon}\right)\Bigr)}\dif t  \\
&\leqslant 3\int_0^{\frac12} \frac{\biggl(1+\log\Bigl(1+\log\Bigl(1+\log\left(1+\frac{1}{t}\right)\Bigr)\Bigr)\biggr)^{-n}}{t\biggl(1+\log\Bigl(1+\log\left(1+\frac{1}{t}\right)\Bigr)\biggr)\Bigl(1+\log\left(1+\frac{1}{t}\right)\Bigr)}\dif t<{+}\infty
\end{aligned} \end{eq}
and
\begin{eq} \label{eq: h_2(t), estimate} \begin{aligned}
&\mathrel{\phantom{=}}\int_0^{\frac14} t^{n-1}h_2(t)\log^n\left(1+\frac{1}{t+\varepsilon}\right)\log^{n-1}\Biggl(1+\log\left(1+\frac{1}{t+\varepsilon}\right)\Biggr)\dif t  \\
&=\varepsilon^{n}\int_0^{\frac{1}{4\varepsilon}} t^{n-1}h_2(\varepsilon t)\log^n\left(1+\frac{1}{\varepsilon(t+1)}\right)\log^{n-1}\Biggl(1+\log\left(1+\frac{1}{\varepsilon(t+1)}\right)\Biggr)\dif t \\
&\leqslant\frac54 \int_0^{{+}\infty} \frac{\biggl(1+\log\Bigl(1+\log\Bigl(1+\log\left(1+\frac{1}{t+1}\right)\Bigr)\Bigr)\biggr)^{-n}\left(\frac{1}{t+1}\right)^{2}}{1+\log\Bigl(1+\log\left(1+\frac{1}{t+1}\right)\Bigr)}\dif t<{+}\infty.
\end{aligned} \end{eq}
Combining \eqref{eq: F_L^1 log^nL log^(n-1)logL, estimate}, \eqref{eq: h_1(t), estimate} and \eqref{eq: h_2(t), estimate}, we have proved \eqref{eq: F_L^1 log^nL log^(n-1)logL}.
\end{proof}

\begin{cor} \label{cor: properties of f_varepsilon}
Fix $\varepsilon\in\left(0\comma\frac{1}{4}\right]$. Let $f$ denote $f_\varepsilon$.
\begin{enumerate}[(1)]
\item $f\bigl(|\bm z|^2\bigr)$ is a strictly plurisubharmonic function on $\overline{\mathrm{B}_{\frac 12}(\bm 0)}\subset\mathbb{C}^n$.
\item Fix $t_0\in\left(0\comma\frac{1}{4}\right)$. For any $t\in\left[\log t_0\comma\log\frac{1}{4}\right]$, we have $\bigl(f(\mathrm{e}^t)\bigr)''\leqslant C$, where $C$ is a positive constant depending only on $\frac{1}{t_0}$.
\item Fix $n\in\mathbb{N}\colon n\geqslant 2$. Then we have
\begin{eq}
\left\|\det\biggl(\mathrm{D}_{\mathbb{C}}^2\Bigl(f\left(|\bm z|^2\right)\Bigr)\biggr)\right\|_{\mathrm{L}^1(\log\mathrm{L})^n(\log\log\mathrm{L})^{n-1}\left(\mathrm{B}_{\frac12}(\bm 0)\comma\dif V_{\mathbb{C}^n}\right)}\leqslant C\comma
\end{eq}
where $C$ is a positive constant depending only on $n$.
\end{enumerate}
\end{cor}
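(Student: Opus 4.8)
The plan is to obtain all three items as direct consequences of \propref{prop: properties of f_varepsilon}, together with the explicit form of the complex Hessian of a radial function that already appeared in the proof of \thmref{thm: gluing lemma of smooth, radial, strictly plurisubharmonic functions}. Recall from there that, writing $t=|\bm z|^2$, one has $\mathrm{D}_{j\overline k}\bigl(f(|\bm z|^2)\bigr)=f'(t)\delta_{jk}+f''(t)\overline{z}_jz_k$, so the eigenvalues of $\mathrm{D}_{\mathbb{C}}^2\bigl(f(|\bm z|^2)\bigr)$ are $f'(t)$ with multiplicity $n-1$ together with $f'(t)+tf''(t)$. For item (1), a point $\bm z\in\overline{\mathrm{B}_{\frac12}(\bm 0)}$ satisfies $t=|\bm z|^2\in\bigl[0\comma\frac14\bigr]$, and part (1) of \propref{prop: properties of f_varepsilon} gives $f'(t)>0$ and $f'(t)+tf''(t)>0$; hence every eigenvalue of the complex Hessian is strictly positive, which is precisely the strict plurisubharmonicity of $f(|\bm z|^2)$ on $\overline{\mathrm{B}_{\frac12}(\bm 0)}$.

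For item (2), I would differentiate: $\bigl(f(\mathrm{e}^t)\bigr)'=\mathrm{e}^tf'(\mathrm{e}^t)$ and $\bigl(f(\mathrm{e}^t)\bigr)''=\mathrm{e}^t\bigl(f'(\mathrm{e}^t)+\mathrm{e}^tf''(\mathrm{e}^t)\bigr)$. When $t\in\bigl[\log t_0\comma\log\frac14\bigr]$ the value $s\triangleq\mathrm{e}^t$ lies in $\bigl[t_0\comma\frac14\bigr]$, whence $\bigl(f(\mathrm{e}^t)\bigr)''=s\bigl(f'(s)+sf''(s)\bigr)\leqslant\frac14\bigl(f'(s)+sf''(s)\bigr)$, and part (2) of \propref{prop: properties of f_varepsilon} bounds the right-hand side by a constant depending only on $\frac{1}{t_0}$.

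For item (3), the eigenvalue description yields $\det\bigl(\mathrm{D}_{\mathbb{C}}^2(f(|\bm z|^2))\bigr)=\bigl(f'(t)\bigr)^{n-1}\bigl(f'(t)+tf''(t)\bigr)=F(t)$ with $t=|\bm z|^2$, where $F$ is the function from part (3) of \propref{prop: properties of f_varepsilon}. Since $\mathrm{B}_{\frac12}(\bm 0)=\bigl\{\bm z\bigm||\bm z|^2<\frac14\bigr\}$, passing to polar coordinates in $\mathbb{C}^n\cong\mathbb{R}^{2n}$ and substituting $t=|\bm z|^2$ converts $\int_{\mathrm{B}_{\frac12}(\bm 0)}\Phi_{1\comma n\comma n-1}\bigl(F(|\bm z|^2)\bigr)\dif V_{\mathbb{C}^n}$ into a fixed dimensional constant times $\int_0^{\frac14}t^{n-1}F(t)\log^n\bigl(1+F(t)\bigr)\log^{n-1}\bigl(1+\log(1+F(t))\bigr)\dif t$, which part (3) of \propref{prop: properties of f_varepsilon} bounds in terms of $n$ alone. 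Inserting this estimate into \propref{prop: estimates of norms} (with $p=1$ and $c=1$) produces the asserted bound on the Luxemburg norm.

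Because the genuine analytic work — the positivity in item (1), the upper bound in item (2), and above all the $\mathrm{L}^1(\log\mathrm{L})^n(\log\log\mathrm{L})^{n-1}$ integrability estimate — is already carried out in \propref{prop: properties of f_varepsilon}, I do not expect any substantive obstacle in this corollary; the only points deserving care are the routine radial change of variables (keeping track of the Jacobian factor $t^{n-1}\dif t$ and the area of the unit sphere $S^{2n-1}$) and the bookkeeping of which quantities each constant is permitted to depend on.
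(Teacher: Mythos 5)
Your proposal is correct and follows essentially the same route as the paper: item~(1) and the identification $\det\bigl(\mathrm{D}_{\mathbb{C}}^2(f(|\bm z|^2))\bigr)=F(|\bm z|^2)$ both rest on the eigenvalue formula \eqref{eq: eigenvalues of D_C^2(f(|z|^2))}, item~(2) on the identity $\bigl(f(\mathrm{e}^t)\bigr)''=\mathrm{e}^t\bigl(f'(\mathrm{e}^t)+\mathrm{e}^tf''(\mathrm{e}^t)\bigr)$, and item~(3) on the polar substitution $r=|\bm z|$, $t=r^2$ (producing the factor $\tfrac12 C_1\int_0^{1/4}t^{n-1}F(t)\cdots\dif t$), followed by \propref{prop: estimates of norms}. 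No gaps.
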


\begin{proof}
(1) We have seen in \eqref{eq: eigenvalues of D_C^2(f(|z|^2))} that all eigenvalues of $\mathrm{D}_{\mathbb{C}}^2\Bigl(f\bigl(|\bm z|^2\bigr)\Bigr)$ are
\begin{eq}
f'\bigl(|\bm z|^2\bigr)\mbox{ ($n-1$ times) and }f'\bigl(|\bm z|^2\bigr)+f''\bigl(|\bm z|^2\bigr)|\bm z|^2.
\end{eq}
Then by the property (1) in \propref{prop: properties of f_varepsilon} it's obvious that $f\bigl(|\bm z|^2\bigr)$ is strictly plurisubharmonic on $\overline{\mathrm{B}_{\frac 12}(\bm 0)}$.

(2) Since
\begin{eq}
\bigl(f(\mathrm{e}^t)\bigr)''=\mathrm{e}^t\bigl(f'(\mathrm{e}^t)+\mathrm{e}^tf''(\mathrm{e}^t)\bigr)\comma
\end{eq}
by the property (2) in \propref{prop: properties of f_varepsilon} it's obvious that $\bigl(f(\mathrm{e}^t)\bigr)''\leqslant C$ for any $t\in\left[\log t_0\comma\log\frac{1}{4}\right]$ and a positive constant $C$ depending only on $\frac{1}{t_0}$.

(3) Let $F(t)$ denote
\begin{eq}
\bigl(f'(t)\bigr)^{n-1}\bigl(f'(t)+tf''(t)\bigr).
\end{eq}
Then we have
\begin{eq}
\det\biggl(\mathrm{D}_{\mathbb{C}}^2\Bigl(f\left(|\bm z|^2\right)\Bigr)\biggr)=\Bigl(f'\bigl(|\bm z|^2\bigr)\Bigr)^{n-1}\Bigl(f'\bigl(|\bm z|^2\bigr)+f''\bigl(|\bm z|^2\bigr)|\bm z|^2\Bigr)=F\bigl(|\bm z|^2\bigr).
\end{eq}
On the other hand, we note that
\begin{eq} \begin{aligned}
&\mathrel{\phantom{=}}\int_{\mathrm{B}_{\frac 12}(\bm 0)} F\bigl(|\bm z|^2\bigr)\log^n\Bigl(1+F\bigl(|\bm z|^2\bigr)\Bigr)\log^{n-1}\biggl(1+\log\Bigl(1+F\bigl(|\bm z|^2\bigr)\Bigr)\biggr)\dif V_{\mathbb{C}^n} \\
&=C_1\int_0^{\frac12} t^{2n-1}F(t^2)\log^n\bigl(1+F(t^2)\bigr)\log^{n-1}\Bigl(1+\log\bigl(1+F(t^2)\bigr)\Bigr)\dif t \\
&=\frac12 C_1\int_0^{\frac14} t^{n-1}F(t)\log^n\bigl(1+F(t)\bigr)\log^{n-1}\Bigl(1+\log\bigl(1+F(t)\bigr)\Bigr)\dif t.
\end{aligned} \end{eq}
This implies
\begin{eq}
\left\|F\bigl(|\bm z|^2\bigr)\right\|_{\mathrm{L}^1(\log\mathrm{L})^n(\log\log\mathrm{L})^{n-1}\left(\mathrm{B}_{\frac12}(\bm 0)\comma\dif V_{\mathbb{C}^n}\right)}\leqslant C
\end{eq}
by \propref{prop: estimates of norms} and the property (3) in \propref{prop: properties of f_varepsilon}.
\end{proof}
 

\section*{Acknowledgements}
\addcontentsline{toc}{section}{Acknowledgements}

The author would like to thank his advisor Professor Gang Tian for his helpful guidance.

\section*{Declarations}
\addcontentsline{toc}{section}{Declarations}

\noindent{\bfseries Conflict of interest} The author has no relevant financial or non-financial interests to disclose.

\noindent{\bfseries Data availability} No datasets were generated or analysed during the current study.

\phantomsection
\addcontentsline{toc}{section}{References}
\setstretch{1}

\end{document}